\title{Calabi--Yau properties of Postnikov diagrams}
\author{Matthew Pressland}
\address{Matthew Pressland\\School of Mathematics \& Statistics\\University of Glasgow\\University Place\\Glasgow\\G20 8LR\\United Kingdom}
\email{\href{mailto:Matthew.Pressland@glasgow.ac.uk}{Matthew.Pressland@glasgow.ac.uk}}
\urladdr{\url{http://mdpressland.github.io}}
\subjclass[2010]{16G20 (Primary) 13F60, 14M15, 18E10, 18E30 (Secondary)}
\keywords{Postnikov diagram, dimer model, Jacobian algebra, Calabi--Yau, positroid}
\date{July 15, 2022}
\newcommand{\myitem}[1][]{%
\item[#1]\protected@edef\@currentlabel{#1}\ignorespaces%
}
\tt \mkbibbrackets{\thefield{eprintclass}}}}}}}
\newcommand{\strandcolor}{black}
\newcommand{\graphcolor}{black}
\newcommand{\quivcolor}{black}
\newcommand{\frozcolor}{black}
\newcommand{\bdrycolor}{gray}
\tikzset{strand/.style={\strandcolor,dashed},
 boundary/.style={ultra thick, \bdrycolor}}
\tikzset{bipedge/.style={\graphcolor, thick}}
\tikzset{quivarrow/.style={\quivcolor, -latex}}
\tikzset{frozarrow/.style={\frozcolor, -latex, very thick}}
\newcommand{\strarrow}{\arrow{angle 90}}
\newcommand{\bstart}{130} % angle of  boundary node 7
\newcommand{\fifth}{72} % 360/7
\newcommand{\dotrad}{1.4pt} % size of plabic graph node
\Crefname{eg}{Example}{Examples}
\Crefname{thm}{Theorem}{Theorems}
\Crefname{conj}{Conjecture}{Conjectures}
\Crefname{prop}{Proposition}{Propositions}
\Crefname{lem}{Lemma}{Lemmas}
\Crefname{defn}{Definition}{Definitions}
\Crefname{thm*}{Theorem}{Theorems}
\Crefname{conj*}{Conjecture}{Conjectures}
\Crefname{ques}{Question}{Questions}
\Crefname{rem}{Remark}{Remarks}
\numberwithin{equation}{section}
\theoremstyle{plain}
\newtheorem{thm}{Theorem}[section]
\newtheorem{thm*}{Theorem}
\newtheorem{lem}[thm]{Lemma}
\newtheorem{cor}[thm]{Corollary}
\newtheorem{prop}[thm]{Proposition}
\theoremstyle{definition}
\newtheorem{defn}[thm]{Definition}
\newtheorem{eg}[thm]{Example}
\newtheorem{rem}[thm]{Remark}
\theoremstyle{remark}
\renewcommand{\subset}{\subseteq}
\newcommand{\K}{\mathrm{K}}
\newcommand{\NN}{\mathbb{N}}
\newcommand{\CC}{\mathbb{C}}
\newcommand{\KK}{\mathbb{K}}
\newcommand{\ZZ}{\mathbb{Z}}
\newcommand{\frobcat}{\mathcal{E}}
\newcommand{\abcat}{\mathcal{A}}
\newcommand{\ctcat}{\mathcal{T}}
\DeclareMathOperator{\CM}{CM}
\DeclareMathOperator{\fgmod}{mod}
\DeclareMathOperator{\fd}{fd}
\DeclareMathOperator{\Ext}{Ext}
\DeclareMathOperator{\Hom}{Hom}
\DeclareMathOperator{\stabEnd}{\underline{End}}
\DeclareMathOperator{\End}{End}
\DeclareMathOperator{\add}{add}
\DeclareMathOperator{\GP}{GP}
\DeclareMathOperator{\MCM}{MCM}
\DeclareMathOperator{\gldim}{gldim}
\DeclareMathOperator{\im}{im}
\DeclareMathOperator{\stabGP}{\underline{GP}}
\DeclareMathOperator{\per}{per}
\DeclareMathOperator{\catrad}{Rad}
\DeclareMathOperator{\indec}{indec}
\DeclareMathOperator{\Ob}{Ob}
\newcommand{\cpa}[2]{#1\langle\hspace{-0.1em}\langle #2\rangle\hspace{-0.1em}\rangle}
\newcommand{\op}{\mathrm{op}}
\newcommand{\iso}{\cong}
\newcommand{\isoto}{\stackrel{\sim}{\to}}
\newcommand{\tens}{\mathbin{\otimes}}
\newcommand{\bigdsum}{\bigoplus}
\newcommand{\union}{\cup}
\newcommand{\map}[1]{\stackrel{#1}{\longrightarrow}}
\newcommand{\powser}[2]{#1[\hspace{-0.1em}[#2]\hspace{-0.1em}]}
\newcommand{\Endalg}[2]{\End_{#1}(#2)^{\op}}
\newcommand{\stabEndalg}[2]{\stabEnd_{#1}(#2)^{\op}}
\newcommand{\head}[1]{h#1}
\newcommand{\tail}[1]{t#1}
\newcommand{\idemp}[1]{e_{#1}}
\newcommand{\clustcat}[1]{\mathcal{C}_{#1}}
\newcommand{\clustalg}[1]{\mathscr{A}_{#1}}
\newcommand{\clust}{\mathcal{C}}
\newcommand{\subsets}[2]{\binom{[#2]}{#1}}
\newcommand{\Gra}[2]{\mathrm{Gr}_{#1}^{#2}}
\newcommand{\coneGra}[2]{\widehat{\mathrm{Gr}}{\vphantom{\mathrm{Gr}}}_{#1}^{#2}}
\newcommand{\posvar}{\Pi}
\newcommand{\openposvar}{\Pi^\circ}
\newcommand{\coneposvar}{\widehat{\Pi}}
\newcommand{\coneopenposvar}{\widehat{\Pi}^\circ}
\newcommand{\posit}{\mathcal{P}}
\newcommand{\neck}{\mathcal{I}}
\newcommand{\Plueck}[1]{\Delta_{#1}}
\newcommand{\simp}[1]{S_{#1}}
\newcommand{\res}[1]{\mathbf{P}(#1)}
\newcommand{\frjac}[3]{\mathcal{J}(#1,#2,#3)}
\newcommand{\cyc}{\mathrm{cyc}}
\newcommand{\rad}[2][]{\operatorname{rad}^{#1}(#2)}
\newcommand{\Kdual}{\mathrm{D}}
\newcommand{\stab}[1]{\underline{#1}}
\newcommand{\set}[1]{\left\{#1\right\}}
\newcommand{\Span}[1]{\langle#1\rangle}
\newcommand{\der}[1]{\partial_{#1}}
\newcommand{\leftder}[1]{\partial^l_{#1}}
\newcommand{\rightder}[1]{\partial^r_{#1}}
\newcommand{\rel}[1]{\rho_{#1}}
\newcommand{\close}[1]{\overline{#1}}
\newcommand{\wt}{\mathrm{wt}}
\newcommand{\mut}{\mathrm{m}}
\newcommand{\minpath}[2]{p_{#2#1}}
\newcommand{\Head}[1]{\mathrm{H}_{#1}}
\newcommand{\Tail}[1]{\mathrm{T}_{#1}}
\newcommand{\mHead}[1]{\mathrm{H}_{#1}^{\mut}}
\newcommand{\mTail}[1]{\mathrm{T}_{#1}^{\mut}}
\newcommand{\QGr}[2]{\mathrm{Gr}_{#1}(#2)}
\newcommand{\Eul}[2]{\langle#1,#2\rangle}
\newcommand{\FKcc}[1]{\varphi^{#1}}
\begin{document}
% ==== ==== ==== ==== ==== ====

\begin{abstract}
We show that the dimer algebra of a connected Postnikov diagram in the disc is bimodule internally $3$-Calabi--Yau in the sense of the author's earlier work \cite{presslandinternally}. As a consequence, we obtain an additive categorification of the cluster algebra associated to the diagram, which (after inverting frozen variables) is isomorphic to the homogeneous coordinate ring of a positroid variety in the Grassmannian by a recent result of Galashin and Lam \cite{galashinpositroid}. We show that our categorification can be realised as a full extension closed subcategory of Jensen--King--Su's Grassmannian cluster category \cite{jensencategorification}, in a way compatible with their bijection between rank $1$ modules and Plücker coordinates.
\end{abstract}
\maketitle

\section{Introduction}

\subsection{Main results}

A Postnikov diagram, or alternating strand diagram \cite{postnikovtotal}, is a combinatorial object consisting of a collection of strands in the disc. It encodes a great deal of further combinatorial, algebraic and geometric data, mostly connected to questions regarding total positivity in the Grassmannian. From the point of view of this paper, the key piece of data encoded by a Postnikov diagram $D$ is that of an ice quiver with potential $(Q_D,F_D,W_D)$. Here $Q_D$ is a quiver with a distinguished subquiver $F_D$ which we call frozen, so that the pair $(Q_D,F_D)$ is an ice quiver, and the potential $W_D$ is, loosely speaking, a linear combination of cycles in $Q_D$ (see Definition~\ref{d:iqp} for a precise definition). The quiver $Q_D$ and the vertex set of $F_D$ determine a cluster algebra $\clustalg{D}$ with frozen variables; the reader unfamiliar with this construction can find details in, for example, Keller's survey \cite{kellercluster}. In this paper, we will explain how to use the triple $(Q_D,F_D,W_D)$ to construct an additive categorification of this cluster algebra, in the case that $D$ is connected.

The construction proceeds via the frozen Jacobian algebra $A_D$ associated to $(Q_D,F_D,W_D)$, also known as the dimer algebra of $D$. This algebra is a quotient of the complete path algebra of $Q_D$ by the closure of an ideal generated by relations computed using the data of the arrow set of $F_D$ (or, more directly, the complement of this set) and the potential $W_D$; again, precise definitions are found in Definition~\ref{d:iqp}. This algebra has a distinguished idempotent $e$, given by the sum of vertex idempotents over the vertices of $F_D$, from which we obtain a \emph{boundary algebra} $B_D=eA_De$. We will show that the category $\GP(B_D)$ of Gorenstein projective $B_D$-modules is our desired categorification.

\begin{thm*}[cf.~Theorem~\ref{t:icytofrobcat} and Theorem~\ref{t:2cy-realisation}]
\label{t:mainthm}
Let $D$ be a connected Postnikov diagram, and let $B_D=eA_De$ be the boundary algebra of its dimer algebra. Then the category $\GP(B_D)$ is an additive categorification of the cluster algebra $\clustalg{D}$.
\end{thm*}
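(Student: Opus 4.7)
The plan is to prove the theorem in two main stages. The first stage, which is the bulk of the work, establishes that the dimer algebra $A_D$, together with its frozen idempotent $e$, is bimodule internally $3$-Calabi--Yau in the sense of \cite{presslandinternally}. The second stage then applies the author's earlier machinery, packaged as Theorem~\ref{t:icytofrobcat} and Theorem~\ref{t:2cy-realisation}, to deduce the categorification statement from this property.

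For the first stage, the approach is to write down an explicit length-three projective bimodule resolution of $A_D$, whose terms are sums of $A_D\idemp{i} \tens \idemp{j}A_D$ indexed by vertices, arrows, relations (the cyclic derivatives $\der{a} W_D$ for each non-frozen arrow $a$, together with the frozen relations), and vertices again, and then to verify two things: that this complex is exact, and that applying $\RHom_{A_D^e}(-, A_D^e)$ returns the resolution shifted by $3$, up to the correction by $1 - e$ that distinguishes the internal from the absolute Calabi--Yau condition. The duality step is essentially formal: it reduces to the observation that, for each non-frozen arrow $a$ of $Q_D$, the cyclic derivative $\der{a} W_D$ expresses $a$ via the two faces of the dimer model sitting on either side of $a$, which is immediate from the construction of $W_D$ from a Postnikov diagram. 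The main obstacle is exactness, which amounts to a consistency statement for the dimer model on the disc. My strategy is to use the strand structure of $D$ to produce, for each ordered pair of vertices of $Q_D$, a canonical minimal path between them modulo the Jacobian ideal, and then to show that every path in $A_D$ is equivalent to this minimal path composed with a power of a suitable interior cycle. Connectedness of $D$ is essential here: it provides the disc analogue of the well-studied consistency conditions for dimer models on closed surfaces, and it fails for disconnected diagrams in a way that would immediately break exactness.

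With the bimodule internally $3$-Calabi--Yau property established, the remainder is a direct application of previous results. Theorem~\ref{t:icytofrobcat} then yields that $\GP(B_D)$ is a Frobenius category which is Hom-finite and stably $2$-Calabi--Yau, whose canonical cluster-tilting object $T = eA_D$, viewed as a $B_D$-module, satisfies $\Endalg{B_D}{T} \iso A_D$ and hence has Gabriel quiver $(Q_D, F_D)$. Theorem~\ref{t:2cy-realisation} then supplies the cluster character matching reachable indecomposable rigid objects of $\GP(B_D)$ with cluster variables of $\clustalg{D}$, and aligns mutation of cluster-tilting objects with seed mutation, completing the identification of $\GP(B_D)$ as an additive categorification of $\clustalg{D}$.
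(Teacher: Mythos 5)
Your outline of the first stage matches the paper's strategy closely: the paper proves $A_D$ is bimodule internally $3$-Calabi--Yau (Theorem~\ref{t:bi3cy}) by checking exactness of the complex $\res{A_D}$ attached to the ice quiver with potential, and the heart of that argument is indeed the thinness property you describe — every $\idemp{w}A_D\idemp{v}$ is a rank-one free module over $\powser{\KK}{t}$ generated by a minimal path (Proposition~\ref{p:thin}), so that an arbitrary path equals a minimal one times a power of the central element $t$. Two small refinements worth noting: the paper does not verify the self-duality of the complex as a separate step; it is absorbed into the sufficient criterion \cite[Thm.~5.7]{presslandinternally} (quoted here as Theorem~\ref{frjaci3cy}), which says exactness alone suffices once the complex is built from the frozen Jacobian presentation. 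And the reduction from exactness of the bimodule complex to exactness of its one-sided specialisations $\res{A_D}\tens_A\simp{v}$ requires the positive grading of Proposition~\ref{p:grading} and the argument of Proposition~\ref{p:exactness-reduction}; without this you cannot pass from thinness (a statement about paths) to exactness of the bimodule complex directly.

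The more substantive gap is in your second stage. You describe Theorem~\ref{t:2cy-realisation} as part of the author's ``earlier machinery'' alongside Theorem~\ref{t:icytofrobcat}, but in fact only the latter is a pre-existing result. Theorem~\ref{t:2cy-realisation} is a new theorem of this paper, and its proof requires further nontrivial input beyond the Calabi--Yau property. Specifically, in order to apply the Fu--Keller formalism and obtain the claimed bijection between reachable rigid indecomposables and cluster variables, one must verify that $(\GP(B_D), T_D)$ has a cluster structure in the weakened sense of Definition~\ref{d:cluster-structure} — in particular that the Gabriel quiver of $\Endalg{B_D}{T}$ has no loops or $2$-cycles for \emph{every} cluster-tilting object $T$ reachable from $T_D$, not just for $T_D$ itself or for those coming from Postnikov diagrams. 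This is the entire content of Section~\ref{s:no-loops} (culminating in Theorem~\ref{t:no-loops}), which runs via the homological embedding of Lemma~\ref{l:homological-embedding} and a global-dimension argument (Proposition~\ref{p:no-2-cycles-gen}). Your proposal omits this step entirely, and it cannot simply be deduced from mutation invariance of quivers with potential, since mutation a priori controls $2$-cycles only at the mutated vertex.

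Two further, more minor, points. First, you assert that $\GP(B_D)$ is Hom-finite; this is false, since $B_D$ itself is a $\GP(B_D)$-object whose endomorphism ring contains $Z=\powser{\KK}{t}$. Hom-finiteness holds for the \emph{stable} category $\stabGP(B_D)$, which is what the $2$-Calabi--Yau property requires. Second, Theorem~\ref{t:icytofrobcat} has two more hypotheses beyond the internal Calabi--Yau property — that $A_D$ is Noetherian and that $A_D/\Span{e}$ is finite-dimensional — which you do not mention; these are checked in Proposition~\ref{p:smallness}, again using thinness.
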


While we use the term `additive categorification' for brevity in the introduction, we acknowledge that it does not have a widely accepted formal definition in this context, and refer the reader to the more precise statements below, which we hope justify our use of this terminology.

The main step in proving Theorem~\ref{t:mainthm} is to establish a certain Calabi--Yau property for the algebra $A_D$, relative to the boundary idempotent $e$, as referred to in the title. This result is likely to be of independent interest, and is analogous to Broomhead's theorem \cite[Thm.~7.1]{broomheaddimer} concerning Calabi--Yau properties of algebraically consistent dimer models on the torus (see also Davison \cite{davisonconsistency} for higher genus surfaces).

\begin{thm*}[cf.\ Theorem~\ref{t:bi3cy}]
\label{t:cy-intro}
Let $D$ be a connected Postnikov diagram, with dimer algebra $A_D$. Then $A_D$ is bimodule internally $3$-Calabi--Yau with respect to the idempotent $e$ given by summing the vertex idempotents for the vertices of $F_D$.
\end{thm*}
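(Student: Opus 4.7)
The plan is to write down an explicit four-term projective bimodule resolution $P_\bullet \to A_D$ of the standard Ginzburg/Bocklandt type associated to the ice quiver with potential $(Q_D, F_D, W_D)$, and check that it carries the self-dual structure required by the definition of \emph{internally $3$-Calabi--Yau with respect to $e$}. Following the frozen Jacobian blueprint, the terms are $P_0$ indexed by all vertices, $P_1$ by the arrows of $Q_D$, $P_2$ by the defining relations (the cyclic derivatives $\der{a} W_D$ for arrows $a$ of $Q_D \setminus F_D$, together with one `boundary' relation for each frozen arrow), and $P_3$ indexed only by the vertices in $(Q_D)_0 \setminus (F_D)_0$. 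Each $P_k$ is a direct sum of bimodules of the form $A_D \idemp{i} \comptens \idemp{j} A_D$, the differentials are dictated by the potential, and there is a natural pairing $P_k \leftrightarrow P_{3-k}$ which, if everything works out, will deliver the Calabi--Yau duality. The asymmetry between $P_0$ and $P_3$ is precisely what the word \emph{internal} is designed to encode.

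Given this setup, the internally $3$-Calabi--Yau condition reduces, by results in the author's earlier paper \cite{presslandinternally}, to verifying that the complex is exact after multiplying by $1-e$ on either side --- i.e.\ that it resolves $A_D$ as a bimodule `away from the frozen subalgebra'. This is the disc analogue of Broomhead's theorem \cite[Thm.~7.1]{broomheaddimer} for algebraically consistent dimer models on the torus. The hardest step is exactness at $P_1$, equivalent to the statement that the relations coming from $W_D$ and from the frozen arrows form a complete list of relations in $A_D$: concretely, for each pair $(i,j)$ with at least one endpoint outside $F_D$, the space $\idemp{j} A_D \idemp{i}$ must be spanned by a canonical family of paths, with every other path equated to a combination of these via iterated application of the face relations.

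To carry this out I would exploit the strand structure of $D$: by the very construction of $W_D$, each internal face bounds two paths in $Q_D$ representing the same element of $A_D$, and repeated use of such identifications yields a `twist' equivalence on paths with prescribed endpoints. The goal would then be to distill this into a distinguished minimal representative in each $\idemp{j} A_D \idemp{i}$, using connectedness of $D$ to keep the non-frozen part of the dimer model rich enough to perform the necessary reductions. The main obstacle is precisely this combinatorial step; I expect it to rest on perfect matchings of the underlying bipartite graph of $D$, or equivalently on positroid combinatorics, in order to confirm that the face relations really do suffice. Once exactness is established, the Calabi--Yau pairing should follow essentially formally from the manifest symmetry of the Ginzburg complex under the interchange of arrows with their cyclic derivatives.
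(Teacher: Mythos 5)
Your overall strategy is the right one and matches the paper's: build the Ginzburg/Bocklandt-style complex $\res{A_D}$ attached to $(Q_D,F_D,W_D)$, invoke a criterion from \cite{presslandinternally} reducing the internally $3$-Calabi--Yau property to exactness of that complex, and then carry out a combinatorial/representation-theoretic exactness check. However, three pieces of the plan are off in ways that would derail the execution.

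First, you state that the criterion is exactness ``after multiplying by $1-e$ on either side.'' This is not what \cite[Thm.~5.7]{presslandinternally} requires: the criterion used in the paper is full exactness of the augmented complex $0 \to \res{A_D} \to A_D \to 0$. The ``internal'' qualifier is already encoded in the shape of the complex (in particular, $P_3$ being indexed only by mutable vertices, so that the complex is not self-dual but only self-dual ``up to the frozen part''); it does not license you to test exactness on a smaller subcomplex.

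Second, you locate the hard work at exactness at $P_1$, characterising it as showing that the face relations and frozen-arrow relations generate all relations in $A_D$. But this is essentially tautological: $A_D$ is by definition the quotient by precisely those relations, and exactness of $P_2 \to P_1 \to P_0 \to A_D \to 0$ is the standard projective bimodule presentation of an algebra given by quiver and relations (Butler--King). The genuinely difficult work is exactness at $P_2$ and $P_3$, which amount to showing there are no ``hidden'' relations among relations and no unexpected elements annihilating all arrows. The paper handles this via two tools you would need to supply: a positive grading on $A_D$ (coming from a weight function on arrows), used to reduce bimodule exactness to exactness of the induced complexes $\res{A_D}\otimes_A S_v$ of one-sided modules; and the \emph{thinness} of $A_D$, the statement that each $e_w A_D e_v$ is free of rank one over $Z = \powser{\KK}{t}$ with $t$ the central element given by face cycles. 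Your ``distinguished minimal representative in each $\idemp{j}A_D\idemp{i}$'' gestures in the right direction (this is exactly the minimal path of the thinness property), but you do not say how to prove it or how to feed it into the exactness argument, which is where nearly all of the work lies.

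Third, the appeal to perfect matchings or positroid combinatorics is not needed: the paper's argument is a direct computation with $t$-adic expansions of elements of $e_w A_D e_v$, together with a lemma (the paper's Lemma~\ref{l:nb-lemma}) guaranteeing that for each mutable $v$ and each $w$ there is an arrow $a$ into $v$ with $\minpath{v}{w} a = \minpath{\tail{a}}{w}$; that lemma is proved via the axioms (P3), (P4) and the geometry of the disc, not via matchings. In summary: the scaffolding is right, but the exactness criterion is misquoted, the load-bearing step is misidentified, and the two key technical inputs (positive grading and thinness) are only hinted at.
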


The general definition of a bimodule internally $3$-Calabi--Yau algebra, and the role of this property in constructing categorifications, is explained in \cite{presslandinternally}. We will not give this general definition here; since $A_D$ is presented as a frozen Jacobian algebra, we can work instead with a sufficient condition implying the Calabi--Yau property, which appears in Theorem~\ref{t:bi3cy}. 

\subsection{Positroid varieties}
\label{ss:positroids}

Postnikov diagrams and their associated cluster algebras are important in studying positroid varieties in the Grassmannian---while this connection has only a minor logical role in the paper, we recall it here to help contextualise and motivate our results. Positroid varieties were introduced by Knutson, Lam and Speyer \cite{knutsonpositroid}, and are closely related to various stratifications of the Grassmannian \cite{lusztigtotal,rietschclosure,brownpoisson,goodearlpoisson}; see \cite[\S5.3]{knutsonpositroid} for a discussion of this relationship. Intersecting a positroid variety with the totally non-negative Grassmannian produces a cell in Postnikov's decomposition of this space, the study of which is one of the main motivations of the work in \cite{postnikovtotal}.

In this context, a positroid is a subset $\posit$ of the set $\subsets{k}{n}$ of $k$-element subsets of $\set{1,\dotsc,n}$ (see \cite[\S1.4]{mullertwist} for the defining properties). Each positroid is uniquely determined by a \emph{necklace}, a cyclically ordered set of $n$ elements of $\subsets{k}{n}$ (see \cite[Defn.~4.2]{ohweak}). Strictly speaking, it is more compatible with our conventions to use the reverse necklace (see \cite[\S2.1]{mullertwist} and \cite[\S8]{canakciperfect}), but we will abuse terminology by referring to the set $\neck\subset\posit$ of elements of this reverse necklace as simply `the necklace $\neck$ of $\posit$'. Recall that the Grassmannian $\Gra{k}{n}$ of $k$-dimensional subspaces of $\CC^n$ has as a standard generating set of homogeneous coordinates the Plücker coordinates $\Plueck{I}$ for $I\in\subsets{k}{n}$.

\begin{defn}
\label{d:positroid-var}
Given a positroid $\posit\subset\subsets{k}{n}$, the (closed) \emph{positroid variety} $\posvar(\posit)$ is the subvariety of the Grassmannian $\Gra{k}{n}$ on which $\Plueck{I}=0$ for $I\notin\posit$. The \emph{open positroid variety} $\openposvar(\posit)$ is the subvariety of $\posvar(\posit)$ on which additionally $\Plueck{I}\ne0$ for $I\in\neck$. We denote the cones on these varieties, which are defined by the same conditions inside the affine cone $\coneGra{k}{n}$ on the Grassmannian, by $\coneposvar(\posit)$ and $\coneopenposvar(\posit)$ respectively.
\end{defn}

A Postnikov diagram $D$ determines a positroid $\posit_D$ with necklace $\neck_D$, and a maximal weakly separated (or maximal non-crossing) collection \cite[Defn.~4.3]{ohweak} $\clust_D$ with $\neck_D\subset\clust_D\subset\posit_D$. In particular, $D$ determines the numbers $k$ and $n$ such that $\posit_D$, $\neck_D$ and $\clust_D$ consist of $k$-element subsets of $\set{1,\dotsc,n}$; see Definition~\ref{d:postnikov-diag}. The elements of $\clust_D$ are attached to the alternating regions of $D$, or equivalently to the vertices of the associated ice quiver $Q_D$; the precise construction is given in Remark~\ref{r:positroids}. While many different Postnikov diagrams can determine the same positroid and necklace, it is rarer for two diagrams to determine the same weakly separated collection---indeed, given a positroid $\posit$ with necklace $\neck$, Oh--Postnikov--Speyer \cite[Thm.~1.5]{ohweak} show that the maximal weakly separated collections $\clust$ with $\neck\subset\clust\subset\posit$ are in bijection with Postnikov diagrams $D$ such that $\posit_D=\posit$ (equivalently, $\neck_D=\neck$), up to isotopy and certain moves (see Figure~\ref{f:untwist}).

We may interpret the collection $\clust_D$ as a set of functions on $\coneopenposvar(\posit_D)$ by restricting the Plücker coordinates $\Delta_I$ for $I\in\clust_D$ to this subvariety of $\coneGra{k}{n}$. Since these functions are attached to the vertices of $Q_D$, we thus obtain a map $\clustalg{D}\to\CC(\coneopenposvar(\posit_D))$, from the abstract cluster algebra $\clustalg{D}$ associated to $D$ to the ring of rational functions on $\coneopenposvar(\posit_D)$, sending each initial cluster variable to the restricted Plücker coordinate corresponding to the same quiver vertex. A long-standing conjecture, formalised by Muller and Speyer \cite[Conj.~3.4]{mullercluster} and recently verified by Galashin and Lam \cite{galashinpositroid} (see also \cite{serhiyenkoclusterstructures} for the special case of Schubert varieties), is the following.

\begin{thm}[\cite{galashinpositroid}]
\label{t:GL}
The specialisation map $\clustalg{D}\to\CC(\coneopenposvar(\posit_D))$ above is injective and its image is precisely the coordinate ring $\CC[\coneopenposvar(\posit_D)]$.
\end{thm}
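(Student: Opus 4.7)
The plan is to establish well-definedness, compatibility with mutation, and then tackle the two containments separately. Well-definedness amounts to checking that each Plücker coordinate $\Plueck{I}$ with $I\in\clust_D$ is a regular function on $\coneopenposvar(\posit_D)$, and that those corresponding to the frozen variables $\neck_D$ are invertible there; both follow immediately from Definition~\ref{d:positroid-var}, since $\neck_D$ consists precisely of the Plücker coordinates required to be nonzero on the open stratum.

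Next, I would verify that the exchange relations of $\clustalg{D}$ are matched by three-term Plücker relations on the Grassmannian. By the theorem of Oh-Postnikov-Speyer cited above, any single mutation of $\clust_D$ corresponds to a square move of $D$, producing a new weakly separated collection still contained in $\posit_D$, and each such mutation is governed by a three-term Plücker relation in which all six participating sets lie in $\posit_D$ and are pairwise weakly separated. This both shows that the specialisation map $\clustalg{D}\to\CC(\coneopenposvar(\posit_D))$ lands inside the subring $\CC[\coneopenposvar(\posit_D)]$, and identifies every cluster variable (not only the initial ones) with the restriction of an honest Plücker coordinate.

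Injectivity should then be essentially automatic from irreducibility of $\coneopenposvar(\posit_D)$. Indeed, $\clustalg{D}$ is a domain mapping to the function field of an irreducible variety, and the initial cluster variables $\set{\Plueck{I} : I\in\clust_D}$ are algebraically independent of cardinality equal to $\dim\coneopenposvar(\posit_D)$, so the map has trivial kernel by a transcendence-degree count.

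The genuine obstacle is surjectivity onto the coordinate ring. My proposed strategy is to exploit the Muller-Speyer twist, a biregular automorphism of $\openposvar(\posit_D)$ that intertwines two cluster structures on its coordinate ring---a \emph{target} structure built from Plücker coordinates, and a \emph{source} structure whose cluster variables pull back to Plücker coordinates under the twist. This should allow the pull-back of a known generating set of $\CC[\coneopenposvar(\posit_D)]$ into the image of our specialisation map. Combining this with Muller's theorem that cluster algebras of positroid type are locally acyclic (so that they coincide with their upper cluster algebras, which in turn contain every regular function by the Laurent phenomenon) would complete the argument. I expect this last step to be the hard one, as it requires tight control over both the twist map and over the interaction between the two cluster structures, and it is essentially here that the full strength of \cite{galashinpositroid} enters.
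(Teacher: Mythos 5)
This theorem is not proved in the paper at all: it is stated as a citation of Galashin--Lam \cite{galashinpositroid} (with a nod to \cite{serhiyenkoclusterstructures} for the Schubert case), and the surrounding text explicitly treats it as an external input used to interpret the categorification, not as a result established here. So there is no ``paper's own proof'' to compare your sketch against. That said, your outline does track the broad strategy of the literature reasonably well: well-definedness and inclusion of the image in $\CC[\coneopenposvar(\posit_D)]$ via the Laurent phenomenon together with three-term Plücker relations; injectivity via a transcendence-degree count using the Laurent embedding and the fact that a maximal weakly separated collection has size $1+\dim\openposvar(\posit_D)$; and surjectivity via the Muller--Speyer twist together with local acyclicity of positroid cluster algebras (Muller) to pass to the upper cluster algebra. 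You correctly identify the twist step as the real content, and that is indeed where the heavy lifting happens in \cite{galashinpositroid}. One small caveat: the statement that every mutation of a maximal weakly separated collection in $\posit_D$ is realized by a square move and governed by a three-term Plücker identity with all six sets in $\posit_D$ and pairwise weakly separated needs the Oh--Postnikov--Speyer theory more carefully than your phrasing suggests, but this is standard and not a conceptual gap. In short, your proposal is a plausible high-level reconstruction of the cited work, but it plays no role in this paper's logic, which simply invokes the theorem.
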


Thus the coordinate ring $\CC[\coneopenposvar(\posit_D)]$ has a cluster algebra structure, with initial seed $(Q_D,\clust_D)$, isomorphic to the cluster algebra we categorify in Theorem~\ref{t:mainthm}. While $\openposvar(\posit_D)$ is an affine variety, and so it is slightly unusual to consider its affine cone, it is this cone on which restricted Plücker coordinates are functions. By contrast, the coordinate ring $\CC[\openposvar(\posit_D)]$ is the degree zero part of $\CC[\coneopenposvar(\posit_D)]$ in the $\ZZ$-grading in which Plücker coordinates have degree $1$. Said differently, the grading determines a $\CC^\times$-action on $\CC[\coneopenposvar(\posit_D)]$ by $\lambda\cdot f=\lambda^df$ when $f$ has degree $d$, and $\CC[\openposvar(\posit_D)]$ is the invariant subring $\CC[\coneopenposvar(\posit_D)]^{\CC^\times}$ for this action.

\begin{rem}
For Theorem~\ref{t:GL} it is necessary to adopt the convention that the frozen variables of $\clustalg{D}$ are invertible, since the images of these frozen variables in $\CC[\coneopenposvar(\posit_D)]$ are precisely the Plücker coordinates $\Plueck{I}$ for $I\in\neck_D$, which are non-zero on $\coneopenposvar(\posit_D)$ by definition. This convention is not relevant to Theorem~\ref{t:mainthm} however, and so we will freely compare our results with those of authors who do not invert frozen variables, such as \cite{jensencategorification}.
\end{rem}

As already observed, the positroid $\posit_D$ and its corresponding variety $\openposvar(\posit_D)$ do not determine $D$ uniquely; choosing a different Postnikov diagram with positroid $\posit_D$ amounts to choosing a different initial seed in the cluster algebra structure on $\CC[\coneopenposvar(\posit_D)]$. This is reflected in our categorical picture by the fact (Corollary~\ref{c:dec-perm}) that if $\posit_D=\posit_{D'}$ for two connected Postnikov diagrams $D$ and $D'$, then the two algebras $B_D$ and $B_D'$ are isomorphic, and hence the categories $\GP(B_D)$ and $\GP(B_{D'})$ from Theorem~\ref{t:mainthm} are equivalent.

The best understood case of these constructions is that in which $\posit_D=\subsets{k}{n}$ is the \emph{uniform positroid}, in which the notion of a maximal weakly separated collection from \cite{ohweak} coincides with that of Leclerc--Zelevinsky \cite{leclercquasicommuting}. In this case we have $\posvar(\posit_D)=\Gra{k}{n}$, the open subvariety $\openposvar(\posit_D)$ is determined by the non-vanishing of those Plücker coordinates labelled by cyclic intervals, and Muller--Speyer's conjecture was verified much earlier by Scott \cite{scottgrassmannians}. (The case $k=2$ even appears in Fomin--Zelevinsky's first paper on cluster algebras \cite{fomincluster1}.) Scott in fact proves more, namely that a completely analogous specialisation map to that of Theorem~\ref{t:GL} is an isomorphism from the cluster algebra with non-invertible frozen variables determined by $Q_D$ to the homogeneous coordinate ring of $\Gra{k}{n}$. However, this depends on special properties of the uniform positroid, and there is no corresponding result for arbitrary Postnikov diagrams.

\subsection{The Grassmannian cluster category}
When $\posit_D$ is the uniform positroid, a categorification of $\clustalg{D}$ is provided by Jensen--King--Su \cite{jensencategorification} (see also Demonet--Luo \cite{demoneticequivers1} for $k=2$). This \emph{Grassmannian cluster category}, denoted $\CM(C)$, consists of Cohen--Macaulay modules over a certain algebra $C$ (depending on $k$ and $n$), defined directly via a quiver with relations. Some of the results in \cite{jensencategorification} rely on the categorification by Geiß--Leclerc--Schröer \cite{geisspartial} of a cluster algebra obtained from $\clustalg{D}$ by specialising a single frozen variable to $1$, or equivalently by removing a frozen vertex and all adjacent arrows from the quiver.

Jensen--King--Su's categorification is connected back to the combinatorics of Postnikov diagrams by work of Baur--King--Marsh \cite{baurdimer}, which is our main inspiration. They show \cite[Cor.~10.4]{baurdimer} that for any Postnikov diagram $D$ for which $\posit_D$ is the uniform positroid, there is an isomorphism $B_D\iso C$ between the boundary algebra of $D$, with which we describe our categorification, and the algebra $C$ describing the Grassmannian cluster category. Since $C$ has the property that $\CM(C)=\GP(C)$, our categorification coincides with Jensen--King--Su's in this case.

In our more general situation, because $B_D$ is defined as an idempotent subalgebra of the dimer algebra $A_D$, we do not typically know a presentation of $B_D$ via a quiver with relations. Our approach therefore differs significantly from that of Jensen--King--Su, since we must deduce properties of the category $\GP(B_D)$ from the combinatorics of the diagram $D$ and the representation theory of $A_D$, for which we do have such a presentation. This is done primarily via the internal Calabi--Yau property of Theorem~\ref{t:bi3cy} which, in the full strength established here, is also a new result in the case that $\posit_D$ is the uniform positroid.

For more general Postnikov diagrams $D$, a categorification analogous to that of Geiß--Leclerc--Schröer's in the uniform case is provided by Leclerc \cite{leclerccluster} in work on Richardson varieties; again he categorifies a cluster algebra obtained from $\clustalg{D}$ by specialising a frozen variable to $1$. Leclerc's category plays a crucial role in the construction of a cluster structure on $\CC[\coneopenposvar(\posit_D)]$ in \cite{serhiyenkoclusterstructures,galashinpositroid}. In this paper we will always consider the full quiver $Q_D$ and its associated cluster algebra---in particular, this means that in the case of the uniform positroid Theorem~\ref{t:mainthm} recovers Jensen--King--Su's category \cite{jensencategorification}, not Geiß--Leclerc--Schröer's \cite{geisspartial}.

Our category $\GP(B_D)$ is closely connected to the coordinate ring $\CC[\coneopenposvar(\posit_D)]$ (and its cluster algebra structure). To demonstrate this, we exhibit a cluster character on $\GP(B_D)$, in the sense of Fu--Keller \cite{fucluster}, providing a bijection between reachable rigid indecomposable objects of $\GP(B_D)$ and cluster variables of $\CC[\coneopenposvar(\posit_D)]$, and a bijection between reachable cluster-tilting objects of $\GP(B_D)$ and clusters of $\CC[\coneopenposvar(\posit_D)]$. We also recall results from joint work with Çanakçı and King \cite{canakciperfect} demonstrating even stronger combinatorial connections between the representation theory of $B_D$ and the geometry of $\coneopenposvar(\posit_D)$. Precisely, there is a fully faithful functor $\rho\colon\GP(B_D)\to\CM(C)$ from the categorification described here to the Grassmannian cluster category (for the appropriate Grassmannian). For each $I\in\subsets{k}{n}$, there is a combinatorially defined $C$-module $M_I$ corresponding to the Plücker coordinate $\Plueck{I}$, and we show (Theorem~\ref{t:rank-1-plueck}) that if $\rho(M)=M_I$, then the cluster character on $\GP(B_D)$ takes $M$ to the restriction of $\Plueck{I}$ to $\coneopenposvar(\posit_D)$.

\begin{rem}
Leclerc's categories \cite{leclerccluster} can be used to categorify a cluster algebra structure on $\CC[\openposvar(\posit_D)]$, i.e.\ on the coordinate ring of the open positroid variety itself rather than its affine cone, but in a way that depends on choosing a frozen variable $\Plueck{I}$ with $I\in\neck_D$. Indeed, the construction involves an isomorphism of $\openposvar(\posit_D)$ with the subvariety of $\coneopenposvar(\posit_D)$ on which $\Plueck{I}=1$; this is the graph of a global section of the line bundle $\coneopenposvar(\posit_D)\to\openposvar(\posit_D)$. In ring-theoretic terms, this isomorphism corresponds to splitting the inclusion
\[\CC[\openposvar(\posit_D)]=\CC[\coneopenposvar(\posit_D)]^{\CC^\times}\subset\CC[\coneopenposvar(\posit_D)]\]
using the map taking $f\in\CC[\coneopenposvar(\posit_D)]$ of degree $d$ to the $\CC^\times$-invariant function $f\Plueck{I}^{-d}$, recalling that $\Plueck{I}$ is invertible on $\openposvar(\posit_D)$.

By avoiding this arbitrary choice, our categorification better reflects the symmetries of $\openposvar(\posit_D)$. For example, in the case of the uniform positroid, both $\openposvar(\posit_D)$ and our category (which coincides with Jensen--King--Su's \cite{jensencategorification} in this case) carry an action of the order $n$ cyclic group coming from natural action of this group on the cyclically ordered set $[n]$, and by extension on the set $\subsets{k}{n}$ labelling Plücker coordinates. Since this action does not fix the frozen variables, but rather cyclically permutes them, choosing a frozen vertex breaks the symmetry, which indeed is no longer present in Geiß--Leclerc--Schröer's category \cite{geisspartial}.

In the case of the uniform positroid, the distinction between Jensen--King--Su's category and Geiß--Leclerc--Schröer's is even more critical, since in this case the homogeneous coordinate ring $\CC[\coneposvar(\posit_D)]=\CC[\coneGra{k}{n}]$ of the closed positroid variety carries its own cluster algebra structure, in which the $n$ frozen variables are non-invertible. For more general positroids $\posit$, the ring $\CC[\coneposvar(\posit)]$ is not a cluster algebra, but we still hope that the avoidance of symmetry-breaking in our constructions will allow us to use the algebra $B_D$ constructed here, and its category of Cohen--Macaulay modules, to study the closed varieties $\posvar(\posit)$ and $\coneposvar(\posit)$---see \cite[Prop.~8.6]{canakciperfect} for a first step in this direction.
\end{rem}

%As in the case of the uniform positroid, we expect there to be a connection between the homogeneous coordinate ring $\CC[\coneopenposvar{\posit_D}]$ and our category $\GP(B_D)$. We also expect this connection to be cleaner than that with Leclerc's category \cite{leclerccluster}, since we have not broken any symmetry by specialising a frozen variable to $1$. We do not discuss such connections here, but will return to this theme in future work \cite{canakciperfect}, where we will relate our category $\GP(B_D)$ for a Postnikov diagram $D$ to Jensen--King--Su's category for the Grassmannian containing the open positroid variety $\openposvar(\posit_D)$.

\medskip

The structure of the paper is as follows. In Section~\ref{s:postnikov-diags} we recall the definition of a Postnikov diagram and the construction of its dimer algebra. The necessary Calabi--Yau property of this algebra is established in Section~\ref{s:calabi-yau}, and used to construct the categorification $\GP(B_D)$ in Section~\ref{s:categorification}. In Section~\ref{s:no-loops} we show that $\GP(B_D)$ has a mutation class of cluster-tilting objects, again determined by $D$, whose quivers have no loops or $2$-cycles. This allows us to show in Section~\ref{s:cluster-character} that there is a cluster character on $\GP(B_D)$ inducing bijections between the cluster-tilting objects in this class and the clusters of $\clustalg{D}$, and between their indecomposable summands and the cluster (and frozen) variables of $\clustalg{D}$. We close in Section~\ref{s:Grassmannian} by explaining how our categorification is related to Jensen--King--Su's Grassmannian cluster category \cite{jensencategorification}, and how its cluster character relates to Galashin and Lam's isomorphism of $\clustalg{D}$ with the coordinate ring $\CC[\coneopenposvar(\posit_D)]$.

\section{Postnikov diagrams and their dimer algebras}
\label{s:postnikov-diags}

In this section, we introduce Postnikov diagrams in the disc, and their associated dimer algebras, which will be defined over a field $\KK$ of characteristic zero.

\begin{defn}[{\cite[\S14]{postnikovtotal}}, see also {\cite[\S3]{scottgrassmannians}} and {\cite[\S2]{baurdimer}}]
\label{d:postnikov-diag}
Let $\Sigma$ be an oriented disc, with $n\geq 3$ marked points on its boundary. A \emph{Postnikov diagram} $D$ in $\Sigma$ consists of $n$ oriented strands in $\Sigma$, starting and ending at the boundary marked points, subject to the following constraints.
\begin{itemize}
\item[(P0)]Each boundary marked point is the start point of exactly one strand, and the end point of exactly one strand.
\item[(P1)]The strands intersect in finitely many points, and each such crossing is transverse and involves only two strands.
\item[(P2)]Moving along a strand, the signs of its crossings with other strands alternate.
\[\begin{tikzpicture}
\draw[dashed]
(0,0) -- (4,0)
[postaction=decorate, decoration={markings,
 mark= at position 0.1 with \strarrow,
 mark= at position 0.38 with \strarrow,
 mark= at position 0.65 with \strarrow,
 mark= at position 0.9 with \strarrow}];
\draw[dashed]
(1,0.7) -- (1,-0.7)
[postaction=decorate, decoration={markings,
 mark= at position 0.15 with \strarrow,
 mark= at position 0.9 with \strarrow}];
\draw[dashed]
(2,-0.7) -- (2,0.7)
[postaction=decorate, decoration={markings,
 mark= at position 0.18 with \strarrow,
 mark= at position 0.92 with \strarrow}];
\draw[dashed]
(3,0.7) -- (3,-0.7)
[postaction=decorate, decoration={markings,
 mark= at position 0.15 with \strarrow,
 mark= at position 0.9 with \strarrow}];
\end{tikzpicture}\]
\item[(P3)]A strand may not intersect itself in the interior of the disc.
\item[(P4)]If two strands intersect twice, the strands are oriented in opposite directions between these intersection points. In other words, the configuration
\[\begin{tikzpicture}
\node at (2,0.5) {$\cdots$};
\node at (2,-0.5) {$\cdots$};
\draw[dashed] plot[smooth]
coordinates {(0,-0.5) (0.5,-0.3) (1,0.3) (1.5,0.5)}
[postaction=decorate, decoration={markings,
 mark= at position 0.3 with \strarrow,
 mark= at position 0.8 with \strarrow}];
\draw[dashed] plot[smooth]
coordinates {(2.5,0.5) (3,0.3) (3.5,-0.3) (4,-0.5)}
[postaction=decorate, decoration={markings,
 mark= at position 0.3 with \strarrow,
 mark= at position 0.8 with \strarrow}];
\draw[dashed] plot[smooth]
coordinates {(0,0.5) (0.5,0.3) (1,-0.3) (1.5,-0.5)}
[postaction=decorate, decoration={markings,
 mark= at position 0.3 with \strarrow,
 mark= at position 0.8 with \strarrow}];
\draw[dashed] plot[smooth]
coordinates {(2.5,-0.5) (3,-0.3) (3.5,0.3) (4,0.5)}
[postaction=decorate, decoration={markings,
 mark= at position 0.3 with \strarrow,
 mark= at position 0.8 with \strarrow}];
\end{tikzpicture}\]
is not permitted, but the configuration obtained from this by reversing the orientation of one of the strands is legal.
\end{itemize}
For axioms (P2) and (P4) the intersection of two strands at the boundary, with its natural sign, is also treated as a crossing.

The diagram $D$ divides $\Sigma$ into \emph{regions}, the connected components of the complement of the strands, which have three types; the orientations of the strands around the boundary of such a region can be inconsistent (in which case we call the region \emph{alternating}), oriented anticlockwise (i.e.\ consistent with the orientation of $\Sigma$) or oriented clockwise. We will use the terminology \emph{anticlockwise} and \emph{clockwise} for regions with these boundary orientations. The alternating regions are further subdivided into \emph{boundary regions}, those incident with the boundary of $\Sigma$, and \emph{internal regions}.

We say that $D$ is \emph{connected} if the union of its strands is connected. This forces each boundary region to meet the boundary of $\Sigma$ in a single arc, and so there are exactly $n$ such regions.

There is an integer $k\in\{1,\dotsc,n\}$ such that each alternating region of $D$ is on the left of exactly $k$ strands. We say that the \emph{type} of $D$ is $(k,n)$, recalling that $n$ is the number of strands. One can also compute $k$ as the average number of boundary regions to the left of a strand. If $D$ has the property that every strand has exactly $k$ boundary regions on its left, then each strand must terminate at a marked point $k$ steps clockwise from its source, and in this case $D$ is sometimes \cite{baurdimer} called a \emph{$(k,n)$-diagram}. Diagrams of this kind are always connected.
\end{defn}

\begin{rem}
It is possible to consider Postnikov diagrams on $n<3$ strands, but they do not lead to interesting cluster algebras. For example, their associated ice quivers (see Definition~\ref{d:dimer-algebra} below) have only frozen vertices, and so describe a cluster algebra with only frozen variables, i.e.\ a polynomial ring. Thus the assumption that $n\geq 3$, which will be useful in Section~\ref{s:no-loops}, only excludes a small number of highly degenerate examples. Both Theorems~\ref{t:mainthm} and \ref{t:cy-intro} do hold when $n<3$, but they reduce to the claim that $B_D$ is an algebra of global dimension at most $3$ with $n$ indecomposable projective modules up to isomorphism, which follows immediately from observing that $B_D$ is the complete path algebra of an $n$-cycle in this case (or from the results of Section~\ref{s:calabi-yau}, which do not use the assumption $n\geq3$).
\end{rem}

A Postnikov diagram in the disc has an associated algebra, called its \emph{dimer algebra}, which, when $D$ is connected, is an instance of a frozen Jacobian algebra defined via an ice quiver with potential. We first give the necessary definitions for this general construction, and then explain how to read off the relevant data from a Postnikov diagram.

\begin{defn}[cf.\ {\cite[\S2]{presslandmutation}}]
\label{d:iqp}
An \emph{ice quiver} is a pair $(Q,F)$, where $Q$ is a quiver, and $F$ is a (not necessarily full) subquiver of $Q$. We call $F_0$, $F_1$ and $F$ the \emph{frozen} vertices, arrows and subquiver respectively. Vertices in $Q_0^\mut:=Q_0\setminus F_0$ will be called \emph{mutable}, whereas arrows in $Q_1^\mut:=Q_1\setminus F_1$ will be simply called \emph{unfrozen}. We assume for simplicity that $Q$ has no loops.

Let $\cpa{\KK}{Q}$ denote the complete path algebra of $Q$ (see \cite[Defn.~2.6]{presslandmutation}). This algebra is graded by path length, which makes its quotient $\cpa{\KK}{Q}_\cyc$ by the vector subspace spanned by commutators into a graded vector space. A \emph{potential} on $Q$ is an element $W\in\cpa{\KK}{Q}_\cyc$  expressible as a linear combination of homogeneous elements of degree at least $2$. We usually think of $W$ as a linear combination of cyclic paths in $Q$ (of length at least $2$), considered up to cyclic equivalence, i.e.\ the finest equivalence relation on such paths such that
\[a_n a_{n-1}\dotsm a_1\sim a_{n-1}\dotsm a_1a_n.\]
We call the triple $(Q,F,W)$ an \emph{ice quiver with potential}.

Let $p=a_n\dotsm a_1$ be a cyclic path, with each $a_i\in Q_1$, and let $a\in Q_1$ be any arrow. Then the \emph{cyclic derivative} of $p$ with respect to $a$ is
\[\der{a}{p}:=\sum_{a_i=a}a_{i-1}\dotsm a_1a_n\dotsm a_{i+1}.\]
Extending $\der{a}{}$ by linearity and continuity, we obtain a map $\cpa{\KK}{Q}_\cyc\to\cpa{\KK}{Q}$.
For an ice quiver with potential $(Q,F,W)$, we define the \emph{frozen Jacobian algebra}
\[\frjac{Q}{F}{W}=\cpa{\KK}{Q}/\close{\Span{\der{a}W:a\in Q_1\setminus F_1}}.\]
Here the closure is taken in the $J$-adic topology on $\cpa{\KK}{Q}$, where $J$ is the ideal generated by arrows.
\end{defn}

\begin{defn}
\label{d:dimer-algebra}
Let $D$ be a Postnikov diagram in the disc. We associate to $D$ an ice quiver $(Q_D,F_D)$ as follows. The vertices of $Q_D$ are the alternating regions of $D$, as in Definition~\ref{d:postnikov-diag}. When the closures of two different alternating regions $v_1$ and $v_2$ meet in a crossing point between strands of $D$, or at one of the boundary marked points, we draw an arrow between $v_1$ and $v_2$, oriented in a way consistent with these strands, as shown in Figure~\ref{fig:dimer-arrows}.
\begin{figure}[h]
\begin{tikzpicture}
\draw[dashed] plot[smooth]
coordinates {(-1,-1) (-0.7,-0.5) (0,0) (0.7,0.5) (1,1)}
[postaction=decorate, decoration={markings,
 mark= at position 0.2 with \strarrow,
 mark= at position 0.85 with \strarrow}];
 
\draw[dashed] plot[smooth]
coordinates {(1,-1) (0.7,-0.5) (0,0) (-0.7,0.5) (-1,1)}
[postaction=decorate, decoration={markings,
 mark= at position 0.2 with \strarrow,
 mark= at position 0.85 with \strarrow}];
 
\node at (0,-1) (R1) {$v_1$};
\node at (0,1) (R2) {$v_2$};

\draw[quivarrow] (R1) -- (R2);
\end{tikzpicture}
\qquad\qquad
\begin{tikzpicture}
\path[ultra thick,\bdrycolor]
	(-1.5,0) edge (1.5,0);
\draw[dashed] plot[smooth]
coordinates {(1,2) (0.8,1) (0.5,0.4) (0,0)}
[postaction=decorate, decoration={markings,
 mark= at position 0.43 with \strarrow}];
 
\draw[dashed] plot[smooth]
coordinates {(0,0) (-0.5,0.4) (-0.8,1) (-1,2)}
[postaction=decorate, decoration={markings,
 mark= at position 0.63 with \strarrow}];
 
\node at (1.25,0.48) (R1) {$v_1$};
\node at (-1.25,0.48) (R2) {$v_2$};

\draw[frozarrow] (R1) edge[bend left] (R2);
\end{tikzpicture}
\caption{Arrows associated to strand crossings.}
\label{fig:dimer-arrows}
\end{figure}
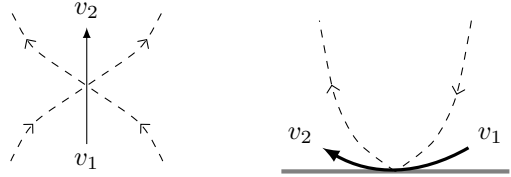
The vertices of $F_D$ are the boundary regions, and the arrows of $F_D$ are those corresponding to boundary marked points, as on the right of Figure~\ref{fig:dimer-arrows}.

Each clockwise or anticlockwise region $f$ of $D$ determines a \emph{fundamental cycle} $C_f$ in $Q$, by following the arrows corresponding to the crossing points in the boundary of the region, as shown in Figure~\ref{fig:fund-cycle} for a quadrilateral anticlockwise region.
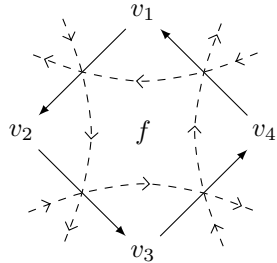
\begin{figure}[h]
\begin{tikzpicture}[scale=0.8]
\draw[dashed] plot[smooth]
coordinates {(35:2.3) (45:1.42) (90:0.85) (135:1.42) (145:2.3)}
[postaction=decorate, decoration={markings,
 mark= at position 0.1 with \strarrow,
 mark= at position 0.53 with \strarrow,
 mark= at position 0.93 with \strarrow}];
 
\draw[dashed] plot[smooth]
coordinates {(125:2.3) (135:1.42) (180:0.85) (225:1.42) (235:2.3)}
[postaction=decorate, decoration={markings,
 mark= at position 0.1 with \strarrow,
 mark= at position 0.53 with \strarrow,
 mark= at position 0.93 with \strarrow}];
 
\draw[dashed] plot[smooth]
coordinates {(215:2.3) (225:1.42) (270:0.85) (315:1.42) (325:2.3)}
[postaction=decorate, decoration={markings,
 mark= at position 0.1 with \strarrow,
 mark= at position 0.53 with \strarrow,
 mark= at position 0.93 with \strarrow}];
 
\draw[dashed] plot[smooth]
 coordinates {(305:2.3) (315:1.42) (0:0.85) (45:1.42) (55:2.3)}
[postaction=decorate, decoration={markings,
 mark= at position 0.1 with \strarrow,
 mark= at position 0.53 with \strarrow,
 mark= at position 0.93 with \strarrow}];

\node at (0,0) (f) {$f$};
\foreach \n in {1,2,3,4}
{\node at (90*\n:2) (R\n) {$v_\n$};}
\draw[quivarrow]
(R1) -- (R2);
\draw[quivarrow]
(R2) -- (R3);
\draw[quivarrow]
(R3) -- (R4);
\draw[quivarrow]
(R4) -- (R1);
\end{tikzpicture}
\caption{A fundamental cycle around an oriented region.}
\label{fig:fund-cycle}
\end{figure}

Thus we may define a potential
\[W_D=\sum_{\text{$f$ anticlockwise}}C_f-\sum_{\text{$f$ clockwise}}C_f\]
on $Q_D$, and we call $A_D:=\frjac{Q_D}{F_D}{W_D}$ the \emph{dimer algebra} of $D$.
\end{defn}

\begin{rem}
While it is straightforward to describe the defining relations of $A_D$ directly without using a potential (see e.g.\ \cite[Def.~2.11]{canakciperfect}), results on frozen Jacobian algebras will play a crucial role for us in Section~\ref{s:calabi-yau}, making it important that $A_D$ has this structure.
\end{rem}

\begin{prop}[cf.\ {\cite[Rem.~3.3]{baurdimer}}]
\label{p:strongly-connected}
The quiver $Q_D$ is strongly connected, i.e.\ any two vertices are connected by a directed path.
\end{prop}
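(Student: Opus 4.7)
The plan is to exploit the fundamental cycles $C_f$ around clockwise and anticlockwise regions $f$, which by construction are directed cycles in $Q_D$. The key local observation is that at each crossing $p$ of strands, including boundary marked points (which are honorary crossings by Definition~\ref{d:postnikov-diag}), the four quadrants at $p$ alternate cyclically between alternating and oriented types: in particular the two alternating regions at $p$ sit in opposite quadrants, and each is adjacent to both oriented quadrants at $p$. Hence the arrow of $Q_D$ attached to $p$ appears on the fundamental cycle of each oriented region incident to $p$; at a boundary marked point only one such oriented quadrant lies inside $\Sigma$, but this one suffices. It follows that any two alternating regions lying on the boundary of a common oriented region $f$ are connected by directed paths in both directions around the cycle $C_f$, and so lie in the same strong component of $Q_D$.

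To finish, I would pass to the region-adjacency graph of $D$, whose vertices are all regions of $D$ and whose edges join pairs of regions sharing a strand segment. By the same local analysis, adjacent regions have opposite types (alternating versus oriented), since they occupy opposite-type quadrants at the two crossings bounding the shared segment. This graph is connected because $\Sigma$ is a connected disc tiled by the regions. For any two alternating vertices $v$ and $w$ of $Q_D$, choose a path $v = R_0, R_1, \dots, R_m = w$ in the region-adjacency graph. Alternation of types forces $m$ to be even, and each $R_{2i+1}$ is then oriented. Since $R_{2i}$ and $R_{2i+2}$ both lie on the boundary of $R_{2i+1}$, they belong to the same strong component of $Q_D$ by the previous step, and iterating yields a directed path from $v$ to $w$.

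The main obstacle, such as it is, is the routine but careful local case-work verifying that the four quadrants at each crossing alternate between alternating and oriented types, for each of the strand-orientation configurations permitted by axioms~(P1) and (P2), together with the analogous claim at boundary marked points. This should not cause any real difficulty: the alternation at a crossing follows immediately from the fact that a region is alternating at a corner precisely when the two strands there have opposite orientations with respect to that corner, and adjacent quadrants differ by switching the side of exactly one strand.
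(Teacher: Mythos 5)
Correct, and essentially the same approach as the paper: both arguments rest on connectedness of the tiling of the disc by regions together with the observation that every arrow of $Q_D$ lies on a fundamental (hence directed) cycle, so that any arrow can effectively be traversed backwards. Your region-adjacency-graph analysis with the alternation of quadrant types is an explicit unpacking of the paper's terser assertion that, since the regions cover the disc, the underlying undirected graph of $Q_D$ is connected.
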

\begin{proof}
Since the regions of $D$ cover the disc, the underlying graph of $Q_D$ is connected. Moreover, every arrow $a$ of $Q_D$ is contained in at least one fundamental cycle---exactly one if $a$ is frozen, and exactly two otherwise. Thus every arrow has a path from its head to its tail, and so $Q_D$ is even strongly connected.
\end{proof}

\begin{rem}
\label{r:positroids}
Having now defined Postnikov diagrams and their associated quivers and algebras more precisely, we may provide a few more details on the relationship to positroid varieties discussed in \S\ref{ss:positroids}.

Label the boundary marked points of $\Sigma$ by $\{1,\dotsc,n\}$, in the clockwise direction. These labels can be transferred first to the strands, by giving a strand the label of its source, and then to the alternating regions, by applying the label of a strand to every alternating region on its left. By the definition of $k$, this process labels each alternating region $v$ of $D$ by a \emph{$k$-label} $I_v\in\subsets{k}{n}$, and it turns out \cite[Thm.~6.6]{ohweak} that the collection $\clust_D$ of these labels is a maximal weakly separated collection in the positroid $\posit_D$. We note here that $\posit_D=\subsets{k}{n}$ (i.e.\ $\posit_D$ is the uniform positroid) if and only if $D$ is a $(k,n)$-diagram.

The cluster algebra $\clustalg{D}$ has initial cluster variables $x_v$ indexed by the vertices of $Q_D$. By construction, these vertices are the alternating regions of $D$, and so each $v$ has an attached $k$-label $I_v\in\subsets{k}{n}$ and Plücker coordinate $\Plueck{I_v}$. The assignment $x_v\mapsto\Plueck{I_v}$ induces Galashin and Lam's isomorphism $\clustalg{D}\to\CC[\coneopenposvar(\posit_D)]$ \cite{galashinpositroid}. (Strictly speaking, Galashin and Lam use the complementary set of $(n-k)$-labels obtained by labelling regions to the right of each strand, so they put a cluster structure on the coordinate ring of an isomorphic positroid variety in $\Gra{n-k}{n}$. Our isomorphism is obtained from theirs using the natural isomorphism $\CC[\Gra{n-k}{n}]\isoto\CC[\Gra{k}{n}]$ with $\Plueck{I}\mapsto\Plueck{I^c}$.)

We note that we could have obtained a different set of $k$-labels, and hence a different specialisation map $\clustalg{D}\to\CC(\coneopenposvar(\posit_D))$, by instead giving the strands of $D$ the labels of their targets. This can produce a different cluster algebra structure (in the sense that different functions are cluster variables) on $\CC[\coneopenposvar(\posit_D)]$, isomorphic to the same abstract cluster algebra $\clustalg{D}$. While this paper is mostly concerned with categorifying $\clustalg{D}$, and so is insensitive to such labelling conventions, they will reappear in Section~\ref{s:Grassmannian}, when we use results of the author with Çanakçı and King \cite{canakciperfect} to relate our categories more directly with the coordinate rings $\CC[\coneopenposvar(\posit_D)]$. In particular, it will turn out to be the source labelling convention that is most compatible with our categorification and its relationship to Jensen--King--Su's Grassmannian cluster category $\CM(C)$ \cite{jensencategorification}.
\end{rem}

\begin{rem}
Given a Postnikov diagram $D$ in the disc $\Sigma$, we can construct from $D$ a bipartite graph $G$ as follows. The vertices of $G$ are given by the anticlockwise and clockwise regions of $D$, and two such vertices are connected by an edge if the the closures of the regions meet at a crossing of strands in the interior of $\Sigma$. The graph $G$ is connected if and only if $D$ is connected. Adding to $G$ the data of half-edges, which connect the vertex corresponding to a region to any boundary marked points in its closure, we obtain a dimer model in $\Sigma$ in the sense of \cite[Ex.~2.5]{presslandmutation}. This construction appears in \cite[\S14]{postnikovtotal}, where the dimer model is called a plabic graph. The dimer algebra $A_D$ is precisely the dimer algebra of $(\Sigma,G)$, as defined in \cite[Ex.~2.12]{presslandmutation}. An example of a Postnikov diagram, its corresponding dimer model, and the ice quiver of its dimer algebra, is shown in Figure~\ref{f:dimereg}.

\begin{figure}[h]
    \centering
    \begin{minipage}[t]{0.33\textwidth}
        \centering
\begin{tikzpicture}[scale=2,baseline=(bb.base)]

% === baseline ===
\path (0,0) node (bb) {};

% === boundary circle ===
\draw [boundary] (0,0) circle(1.0);

% === boundary node labels
\foreach \n/\a in {1/-12, 2/0, 3/5, 4/10, 5/12}
{ \coordinate (b\n) at (\bstart-\fifth*\n+\a:1.0);
%\draw (\bstart-\fifth*\n+\a:1.1) node {$\n$};
\coordinate (l\n) at (\bstart-\fifth*\n+\a-4:0.85);
\coordinate (r\n) at (\bstart-\fifth*\n+\a+4:0.85);}

% === plabic graph node positions ===
\foreach \n/\m in {6/1, 7/2, 8/3, 9/4, 10/5}
  {\coordinate (b\n) at ($0.55*(b\m)$);}
\foreach \n/\m in {12/2, 13/4}
  {\coordinate (b\n) at ($0.80*(b\m)$);}

\coordinate (b11) at ($0.4*(b6) + 0.4*(b10)$);

%nudge
\foreach \n/\s in {6/1.2, 7/0.9, 8/0.7, 9/0.9, 10/1.2}
{\coordinate (b\n) at ($\s*(b\n)$);}

% === plabic graph edges ===
%\foreach \h/\t in {1/6, 2/12, 7/12, 3/8, 4/13, 9/13, 5/10, 6/7, 7/8, 8/9, 9/10, 10/11, 6/11, 8/11}
%{ \draw [bipedge] (b\h)--(b\t); }

% === plabic graph nodes ===
%\foreach \n in {6,8,10,12,13} 
%  {\draw [\graphcolor] (b\n) circle(\dotrad) [fill=\graphcolor];} % black
%\foreach \n in {7,9,11}  
%  {\draw [\graphcolor] (b\n) circle(\dotrad) [fill=white];} % white

% === edge midpoint positions ===
\foreach \e/\f/\t in {1/6/0.5, 2/12/0.5, 7/12/0.5, 3/8/0.5, 4/13/0.5, 9/13/0.5, 5/10/0.5, 6/7/0.5, 7/8/0.5, 8/9/0.5, 9/10/0.5, 10/11/0.5, 6/11/0.5, 8/11/0.6}
{\coordinate (a\e-\f) at ($(b\e) ! \t ! (b\f)$); }

%% === strands ===
\draw [strand] plot[smooth]
coordinates {(b1) (a6-11) (a8-11) (a8-9) (a9-13) (l4) (b4)}
[postaction=decorate, decoration={markings,
 mark= at position 0.16 with \strarrow,
  mark= at position 0.37 with \strarrow,
 mark= at position 0.59 with \strarrow,
 mark= at position 0.76 with \strarrow,
  mark= at position 0.92 with \strarrow }];
 
\draw [strand] plot[smooth]
coordinates {(b2) (r2) (a7-12) (a7-8) (a8-11) (a10-11) (b5)}
[postaction=decorate, decoration={markings,
 mark= at position 0.1 with \strarrow,
 mark= at position 0.26 with \strarrow,
 mark= at position 0.44 with \strarrow,
 mark= at position 0.64 with \strarrow,
 mark= at position 0.85 with \strarrow }];
 
\draw [strand] plot[smooth]
coordinates {(b3) (a7-8) (a6-7) (b1)}
[postaction=decorate, decoration={markings,
 mark= at position 0.24 with \strarrow,
 mark= at position 0.54 with \strarrow, 
 mark= at position 0.82 with \strarrow }];

\draw [strand] plot[smooth]
coordinates {(b4) (r4) (a9-13) (a9-10) (a10-11) (a6-11) (a6-7) (a7-12) (l2) (b2)}
 [postaction=decorate, decoration={markings,
 mark= at position 0.08 with \strarrow,
 mark= at position 0.2 with \strarrow,
  mark= at position 0.36 with \strarrow,
 mark= at position 0.52 with \strarrow, 
 mark= at position 0.67 with \strarrow,
  mark= at position 0.81 with \strarrow,
 mark= at position 0.93 with \strarrow }];

\draw [strand] plot[smooth]
coordinates {(b5) (a9-10) (a8-9) (b3)}
[postaction=decorate, decoration={markings,
 mark= at position 0.2 with \strarrow,
 mark= at position 0.5 with \strarrow, 
 mark= at position 0.78 with \strarrow }];

%% === quiver vertices : boundary, internal ===
%%\foreach \n/\a in {1/0, 2/0, 3/5, 4/10, 5/0, 6/-3, 7/0}
%\foreach \n/\m/\a in {1/45/0, 2/15/-7, 3/12/0, 4/23/10, 5/34/12}
%{ \draw (\bstart+\fifth/2-\fifth*\n+\a:0.93) node (q\m) {\color{blue}$\m$}; }
%
%\foreach \m/\a/\r in {25/14/0.25, 24/176/0.22}
%{ \draw (\a:\r) node (q\m) {\color{OliveGreen}$\m$}; }

% === quiver arrows ===
%\foreach \t/\h/\a in {23/12/-27, 34/23/-27, 45/34/-32, 15/45/-32, 12/15/-27}
%{ \draw [frozarrow]  (q\t) edge [bend left=\a] (q\h); }
% 
%\foreach \t/\h/\a in {25/15/12, 15/12/-25, 12/25/23, 25/24/2, 24/45/20, 45/25/20, 24/23/20, 23/34/-20, 34/24/12}
%{ \draw [quivarrow]  (q\t) edge [bend left=\a] (q\h); }

 \end{tikzpicture}
    \end{minipage}
    \begin{minipage}[t]{0.33\textwidth}
        \centering
\begin{tikzpicture}[scale=2,baseline=(bb.base)]

% === baseline ===
\path (0,0) node (bb) {};

% === boundary circle ===
\draw [boundary] (0,0) circle(1.0);

% === boundary node labels
\foreach \n/\a in {1/-12, 2/0, 3/5, 4/10, 5/12}
{ \coordinate (b\n) at (\bstart-\fifth*\n+\a:1.0);
%\draw (\bstart-\fifth*\n+\a:1.1) node {$\n$};
\coordinate (l\n) at (\bstart-\fifth*\n+\a-4:0.85);
\coordinate (r\n) at (\bstart-\fifth*\n+\a+4:0.85);}

% === plabic graph node positions ===
\foreach \n/\m in {6/1, 7/2, 8/3, 9/4, 10/5}
  {\coordinate (b\n) at ($0.55*(b\m)$);}
\foreach \n/\m in {12/2, 13/4}
  {\coordinate (b\n) at ($0.80*(b\m)$);}

\coordinate (b11) at ($0.4*(b6) + 0.4*(b10)$);

%nudge
\foreach \n/\s in {6/1.2, 7/0.9, 8/0.7, 9/0.9, 10/1.2}
{\coordinate (b\n) at ($\s*(b\n)$);}

% === plabic graph edges ===
\foreach \h/\t in {1/6, 2/12, 7/12, 3/8, 4/13, 9/13, 5/10, 6/7, 7/8, 8/9, 9/10, 10/11, 6/11, 8/11}
{ \draw [bipedge] (b\h)--(b\t); }

% === plabic graph nodes ===
\foreach \n in {6,8,10,12,13} 
  {\draw [\graphcolor] (b\n) circle(\dotrad) [fill=\graphcolor];} % black
\foreach \n in {7,9,11}  
  {\draw [\graphcolor] (b\n) circle(\dotrad) [fill=white];} % white

% === edge midpoint positions ===
\foreach \e/\f/\t in {1/6/0.5, 2/12/0.5, 7/12/0.5, 3/8/0.5, 4/13/0.5, 9/13/0.5, 5/10/0.5, 6/7/0.5, 7/8/0.5, 8/9/0.5, 9/10/0.5, 10/11/0.5, 6/11/0.5, 8/11/0.6}
{\coordinate (a\e-\f) at ($(b\e) ! \t ! (b\f)$); }

 \end{tikzpicture}
    \end{minipage}\hfill
    \begin{minipage}[t]{0.33\textwidth}
        \centering
\begin{tikzpicture}[scale=2,baseline=(bb.base)]

% === baseline ===
\path (0,0) node (bb) {};

% === boundary circle ===
%\draw [boundary] (0,0) circle(1.0);

% === boundary node labels
\foreach \n/\a in {1/-12, 2/0, 3/5, 4/10, 5/12}
{ \coordinate (b\n) at (\bstart-\fifth*\n+\a:1.0);
%  \draw (\bstart-\fifth*\n+\a:1.1) node {$\n$};
  \coordinate (l\n) at (\bstart-\fifth*\n+\a-4:0.85);
  \coordinate (r\n) at (\bstart-\fifth*\n+\a+4:0.85);}

% === plabic graph node positions ===
\foreach \n/\m in {6/1, 7/2, 8/3, 9/4, 10/5}
  {\coordinate (b\n) at ($0.55*(b\m)$);}
\foreach \n/\m in {12/2, 13/4}
  {\coordinate (b\n) at ($0.80*(b\m)$);}

\coordinate (b11) at ($0.4*(b6) + 0.4*(b10)$);

%nudge
\foreach \n/\s in {6/1.2, 7/0.9, 8/0.7, 9/0.9, 10/1.2}
{\coordinate (b\n) at ($\s*(b\n)$);}

% === plabic graph edges ===
%\foreach \h/\t in {1/6, 2/12, 7/12, 3/8, 4/13, 9/13, 5/10, 6/7, 7/8, 8/9, 9/10, 10/11, 6/11, 8/11}
%{ \draw [bipedge] (b\h)--(b\t); }

% === plabic graph nodes ===
%\foreach \n in {6,8,10,12,13} 
%  {\draw [\graphcolor] (b\n) circle(\dotrad) [fill=\graphcolor];} % black
%\foreach \n in {7,9,11}  
%  {\draw [\graphcolor] (b\n) circle(\dotrad) [fill=white];} % white

% === edge midpoint positions ===
\foreach \e/\f/\t in {1/6/0.5, 2/12/0.5, 7/12/0.5, 3/8/0.5, 4/13/0.5, 9/13/0.5, 5/10/0.5, 6/7/0.5, 7/8/0.5, 8/9/0.5, 9/10/0.5, 10/11/0.5, 6/11/0.5, 8/11/0.6}
{\coordinate (a\e-\f) at ($(b\e) ! \t ! (b\f)$); }

\foreach \n/\m/\a in {1/45/0, 2/15/-7, 3/12/0, 4/23/10, 5/34/12}
{ \draw (\bstart+\fifth/2-\fifth*\n+\a:0.93) node (q\m) {$\diamond$}; }

\foreach \m/\a/\r in {25/14/0.25, 24/176/0.22}
{ \draw (\a:\r) node (q\m) {$\bullet$}; }

% === quiver arrows ===
\foreach \t/\h/\a in {23/12/-27, 34/23/-27, 45/34/-32, 15/45/-32, 12/15/-27}
{ \draw [frozarrow]  (q\t) edge [bend left=\a] (q\h); }
 
\foreach \t/\h/\a in {25/15/12, 15/12/-25, 12/25/23, 25/24/2, 24/45/20, 45/25/20, 24/23/20, 23/34/-20, 34/24/12}
{ \draw [quivarrow]  (q\t) edge [bend left=\a] (q\h); }

 \end{tikzpicture}
    \end{minipage}
\caption{A Postnikov diagram (left), its corresponding dimer model (centre) and the ice quiver of its dimer algebra (right). The frozen vertices of this ice quiver are shown as white diamonds, and the frozen arrows are drawn in bold.}
\label{f:dimereg}
\end{figure}
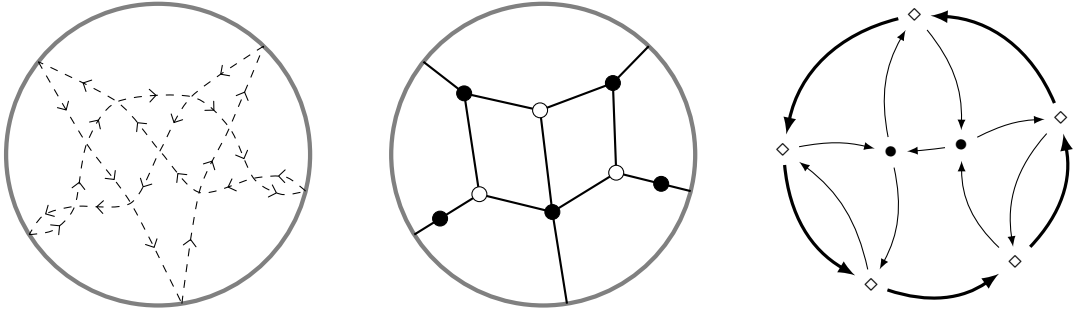

Given the data of a dimer model $G$ in the disc, or indeed in any oriented surface $\Sigma$ with or without boundary, it is always possible to produce a collection of strands, dividing $\Sigma$ into alternating, clockwise and anticlockwise regions, from which $G$ may be recovered by the above rules. These strands are usually called the \emph{zig-zag paths} or flows of $G$ \cite[\S3.3]{broomheaddimer}. Note that in general these strands can either travel between points on the boundary of $\Sigma$, as in Definition~\ref{d:postnikov-diag}, or be closed curves in the interior. In the case that $\Sigma$ is a disc, these strands will always satisfy conditions (P0)--(P2) from Definition~\ref{d:postnikov-diag}. Requiring that they also satisfy (P3) and (P4)---which in particular rules out any closed curves in the interior---places an extra condition on the dimer model $G$, the analogue of Broomhead's geometric consistency \cite[Prop.~3.12]{broomheaddimer}. Since these conditions, particularly (P4), will be important for us, we prefer to start from the data of a Postnikov diagram.
\end{rem}

Postnikov diagrams are typically considered up to isotopy fixing the boundary, which does not affect the construction of the dimer algebra at all, and certain twisting and untwisting moves shown in Figure~\ref{f:untwist}. These moves do affect the construction of the dimer algebra, in the sense that two Postnikov diagrams related by such a move determine different ice quivers with potential, as shown in Figure~\ref{f:untwist-quiv} (cf.~\cite[Ex.~4.5]{presslandmutation}). However, the frozen Jacobian algebras of these two quivers with potential, i.e.\ the dimer algebras of the two Postnikov diagrams, are isomorphic in a natural way. These moves also do not affect the $k$-labels attached to the quiver vertices.
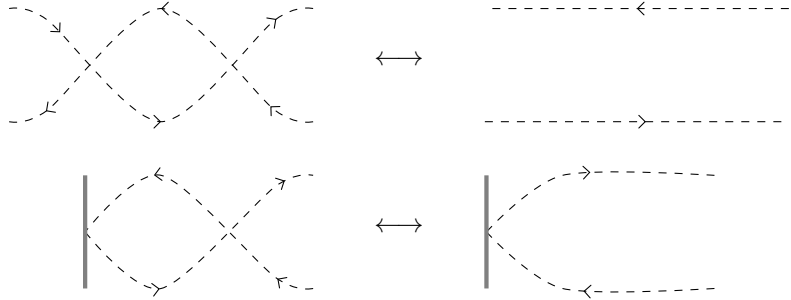
\begin{figure}[h]
$\mathord{\begin{tikzpicture}[baseline=0,yscale=0.75]
\draw[dashed] plot[smooth]
coordinates {(-2,1) (-1.5,0.8) (0,-1) (1.5,0.8) (2,1)}
[postaction=decorate, decoration={markings,
 mark= at position 0.15 with \strarrow,
 mark= at position 0.5 with \strarrow,
 mark= at position 0.9 with \strarrow}];
 
\draw[dashed] plot[smooth]
coordinates {(2,-1) (1.5,-0.8) (0,1) (-1.5,-0.8) (-2,-1)}
[postaction=decorate, decoration={markings,
 mark= at position 0.12 with \strarrow,
 mark= at position 0.5 with \strarrow,
 mark= at position 0.9 with \strarrow}];
\end{tikzpicture}}
\qquad\longleftrightarrow\qquad
\mathord{\begin{tikzpicture}[baseline=0,yscale=0.75]
\draw[dashed] plot[smooth]
coordinates {(2,1) (-2,1)}
[postaction=decorate, decoration={markings,
 mark= at position 0.5 with \strarrow}];
 
\draw[dashed] plot[smooth]
coordinates {(-2,-1) (2,-1)}
[postaction=decorate, decoration={markings,
 mark= at position 0.53 with \strarrow}];
\end{tikzpicture}}$\vspace{0.5cm}

$\mathord{\begin{tikzpicture}[baseline=0,yscale=0.75]
\draw[dashed] plot[smooth]
coordinates {(-1,0) (0,-1) (1.5,0.8) (2,1)}
[postaction=decorate, decoration={markings,
 mark= at position 0.33 with \strarrow,
 mark= at position 0.9 with \strarrow}];
 
\draw[dashed] plot[smooth]
coordinates {(2,-1) (1.5,-0.8) (0,1) (-1,0)}
[postaction=decorate, decoration={markings,
 mark= at position 0.13 with \strarrow,
 mark= at position 0.69 with \strarrow}];
 
\draw [boundary] (-1,1) -- (-1,-1);
\end{tikzpicture}}
\qquad\longleftrightarrow\qquad
\mathord{\begin{tikzpicture}[baseline=0,yscale=0.75]
\draw[dashed] plot[smooth]
coordinates {(-1,0) (0,1) (2,1)}
[postaction=decorate, decoration={post length=1mm,pre length=1mm,markings,
 mark= at position 0.5 with \strarrow}];
 
\draw[dashed] plot[smooth]
coordinates {(2,-1) (0,-1) (-1,0)}
[postaction=decorate, decoration={post length=1mm,pre length=1mm,markings,
 mark= at position 0.53 with \strarrow}];
 
\draw [boundary] (-1,1) -- (-1,-1);
\end{tikzpicture}}$
\caption{Twisting (right-to-left) and untwisting (left-to-right) moves for a Postnikov diagram, in the interior (above) and at the boundary (below). The reflections of these figures in a horizontal line also show twisting and untwisting moves.}
\label{f:untwist}
\end{figure}
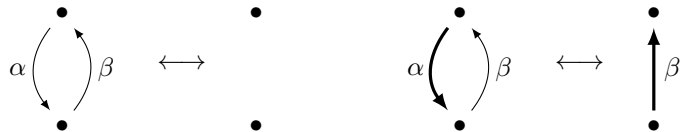
\begin{figure}[h]
\begin{minipage}[t]{0.35\textwidth}
\centering
$\mathord{\begin{tikzpicture}[baseline=0,yscale=0.75]
\draw (0,1) node (v1) {$\bullet$};
\draw (0,-1) node (v2) {$\bullet$};

\draw [quivarrow] (v1) edge [bend right] node[midway,xshift=-6] {$a$} (v2) ;
\draw [quivarrow] (v2) edge [bend right] node[midway,xshift=6] {$b$}(v1);
\end{tikzpicture}}
\quad\longleftrightarrow\quad
\mathord{\begin{tikzpicture}[baseline=0,yscale=0.75]
\draw (0,1) node (v1) {$\bullet$};
\draw (0,-1) node (v2) {$\bullet$};
\end{tikzpicture}}$
\end{minipage}
\begin{minipage}[t]{0.35\textwidth}
\centering
$\mathord{\begin{tikzpicture}[baseline=0,yscale=0.75]
\draw (0,1) node (v1) {$\bullet$};
\draw (0,-1) node (v2) {$\bullet$};

\draw [frozarrow] (v1) edge [bend right] node[midway,xshift=-6] {$a$} (v2) ;
\draw [quivarrow] (v2) edge [bend right] node[midway,xshift=6] {$b$}(v1);
\end{tikzpicture}}
\quad\longleftrightarrow\quad
\mathord{\begin{tikzpicture}[baseline=0,yscale=0.75]
\draw (0,1) node (v1) {$\bullet$};
\draw (0,-1) node (v2) {$\bullet$};

\draw [frozarrow] (v2) edge node[midway,right] {$b$}(v1);
\end{tikzpicture}}$
\end{minipage}
\caption{The effect on the quiver of applying a twisting or untwisting move in the interior (left) or at the boundary (right). Bold arrows are frozen.}
\label{f:untwist-quiv}
\end{figure}

\begin{prop}[cf.~{\cite[Lem.~12.1]{baurdimer}}]
\label{p:untwisting}
Let $D$ and $D'$ be Postnikov diagrams such that $D'$ is obtained from $D$ via an untwisting move. Then the dimer algebras $A_D$ and $A_{D'}$ are isomorphic.
\end{prop}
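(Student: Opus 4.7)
The plan is to construct an explicit isomorphism, leveraging the local nature of the move: $D$ and $D'$ agree outside a small disc around the site of the untwisting, so it suffices to analyse the change there. I would treat the interior case in detail; the boundary case is analogous but simpler, involving only a single unfrozen arrow and one Jacobian relation.

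For the interior case, in $D$ the two crossings $c_1, c_2$ of the twisted strands produce in $Q_D$ a $2$-cycle $\alpha\colon v_1\to v_2$, $\beta\colon v_2\to v_1$, where $v_1, v_2$ are the alternating regions flanking the bigon $L$ between $c_1$ and $c_2$. Writing $R_1, R_2$ for the two further oriented regions adjacent to $L$ across the crossings, the $\alpha,\beta$-containing terms of $W_D$ take the form $\epsilon_L\alpha\beta+\epsilon_1\alpha p_1+\epsilon_2\beta p_2$, with $p_1\colon v_2\to v_1$ and $p_2\colon v_1\to v_2$ paths in $Q_{D'}\subseteq Q_D$, and $\epsilon_L,\epsilon_1,\epsilon_2\in\{\pm 1\}$ recording the orientations of $L, R_1, R_2$. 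A direct geometric check, using (P4) and the general observation that the two oriented regions at any crossing have opposite orientations, yields $\epsilon_1=\epsilon_2=-\epsilon_L$.

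The cyclic derivatives $\der{\alpha}{W_D}$ and $\der{\beta}{W_D}$ then give the relations $\beta=p_1$ and $\alpha=p_2$ in $A_D$, suggesting the $\KK$-algebra homomorphism $\phi\colon\cpa{\KK}{Q_D}\to A_{D'}$ sending $\alpha\mapsto p_2$, $\beta\mapsto p_1$ and fixing the other arrows. The key step is to show that $\phi$ descends to $A_D\to A_{D'}$; for this it suffices to verify $\phi(W_D)\equiv W_{D'}$ up to cyclic equivalence. Substituting and using the cyclic equivalence $p_1p_2\equiv p_2p_1$, the three $\alpha,\beta$-terms collapse to $(\epsilon_L+\epsilon_1+\epsilon_2)p_2p_1=-\epsilon_L p_2p_1$. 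In $D'$ the regions $L, R_1, R_2$ merge into a single oriented region $M$ whose fundamental cycle is $p_2p_1$ (up to cyclic rotation) and whose orientation sign is $\epsilon_M=-\epsilon_L$, as one verifies geometrically by tracing the strand orientations around $M$. Hence $\phi(W_D)\equiv W_{D'}$, so $\phi$ is well-defined on $A_D$, and the inclusion $Q_{D'}\hookrightarrow Q_D$ induces a mutual inverse.

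I expect the main technical obstacle to be the sign tracking, in particular establishing $\epsilon_M=-\epsilon_L$ via careful analysis of the strand orientations. A more conceptual alternative, which should also work in this setting, would be to invoke a general reduction theorem for frozen Jacobian algebras analogous to Derksen--Weyman--Zelevinsky reduction of quivers with potential, allowing direct elimination of the $2$-cycle without explicit sign computation.
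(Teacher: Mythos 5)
Your proof is correct and follows essentially the same route as the paper's: identify the $2$-cycle $\alpha,\beta$ created by the two crossings, use the Jacobian relations $\der{\alpha}W_D$ and $\der{\beta}W_D$ to see $\beta=p_1$ and $\alpha=p_2$ in $A_D$, and define the substitution $\alpha\mapsto p_2$, $\beta\mapsto p_1$ as a mutual inverse to the inclusion-induced map. Your explicit sign-tracking (verifying $\epsilon_1=\epsilon_2=-\epsilon_L$ from the fact that the two oriented regions at a crossing have opposite orientations, and $\epsilon_M=-\epsilon_L$ for the merged region) fills in details the paper leaves implicit, and your suggested ``conceptual alternative'' via a reduction theorem for frozen Jacobian algebras is precisely what the paper points out immediately after the proof, referring to the reduction of ice quivers with potential in \cite[Thm.~3.6]{presslandmutation}.
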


\begin{proof}
First we consider the case that the untwisting move happens in the interior, as in the upper part of Figure~\ref{f:untwist}. Then $Q_{D'}$ is obtained from $Q_D$ by removing two arrows $a$ and $b$ which form a fundamental cycle, as in the left-hand side of Figure~\ref{f:untwist-quiv}. Since these arrows are unfrozen, they are each contained in a second fundamental cycle, and we write these cycles as $ap$ and $bq$ for some paths $p$ and $q$ respectively. Since these paths do not contain the arrows $a$ or $b$, they are also paths in $Q_{D'}$. Moreover, the relation $\der{a}{W_D}=0$ implies that $b=p$ in $A_D$, and the relation $\der{b}{W_D}=0$ that $a=q$ in $A_D$. Define a map $\varphi\colon\cpa{\KK}{Q_D}\to\cpa{\KK}{Q_{D'}}$ fixing all vertex idempotents (the two quivers having the same vertex set) and all arrows different from $a$ and $b$, and with $\varphi(a)=q$ and $\varphi(b)=p$. This map induces a homomorphism $A_D\to A_{D'}$ inverse to that induced by the map $\cpa{\KK}{Q_{D'}}\to\cpa{\KK}{Q_D}$ arising from the inclusion map between arrow sets.

On the other hand, if the untwisting move takes place at the boundary, then $Q_{D'}$ is obtained from $Q_D$ by removing a frozen arrow $a$ lying in a fundamental $2$-cycle, and $F_{D'}$ is obtained from $F_D$ by replacing $a$ by the other arrow $b$ in this $2$-cycle. As before, since $b$ is unfrozen it lies in a second fundamental cycle $bq$ for some path $q$ not involving $a$, and it follows from the relation $\der{b}{W_D}=0$ that $a=q$ in $A_D$. Thus defining $\varphi$ as in the first case except for $\varphi(b)=b$ provides an isomorphism of dimer algebras in an analogous way.
\end{proof}

We call a Postnikov diagram \emph{reduced} if no untwisting moves can be applied to it; any Postnikov diagram $D$ is equivalent under untwisting moves to a reduced diagram $D'$, unique up to isotopy fixing the boundary. The reader is warned that this does not correspond to Postnikov's definition of reducedness for plabic graphs \cite[\S12]{postnikovtotal}, but rather is compatible with terminology for quivers with potential: the ice quiver with potential $(Q_{D'},F_{D'},W_{D'})$ is the reduction of $(Q_D,F_D,W_D)$, in the sense of \cite[\S3]{presslandmutation}. Indeed, the above isomorphisms of dimer algebras in the proof of Proposition~\ref{p:untwisting} are local versions of the reduction isomorphism given at the end of the proof of \cite[Thm.~3.6]{presslandmutation}.

\begin{prop}
\label{p:reduced-quiv}
If $D$ is reduced, the quiver $Q_D$ has no $2$-cycles.
\end{prop}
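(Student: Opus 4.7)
I plan to prove the contrapositive: if $Q_D$ contains a $2$-cycle, then $D$ admits an untwisting move. Suppose $\alpha \colon v_1 \to v_2$ and $\beta \colon v_2 \to v_1$ are arrows of $Q_D$ lying at distinct points $p_\alpha \neq p_\beta$ of $\Sigma$ (distinct since $Q_D$ has no loops). Each of $p_\alpha, p_\beta$ is either an interior strand crossing or a boundary marked point, and at each such point the four adjacent region-germs are precisely $v_1, v_2$ together with an anticlockwise and a clockwise oriented region.

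The crux of the argument is to identify, on one side of the pair $\{p_\alpha, p_\beta\}$, an oriented region $f$ whose boundary consists of precisely two strand arcs meeting at these two points, i.e.\ to show that a bigon exists between them. Since the closures of $v_1$ and $v_2$ both contain $p_\alpha$ and $p_\beta$, their boundaries divide the disc into several subregions. I will argue that on at least one of the two sides of $\{p_\alpha,p_\beta\}$ we have such a bigon. The key input is the alternating axiom (P2) together with the observation that any additional strand entering the purported bigon would cross one of the boundary arcs, producing an extra crossing along $\partial v_1$ or $\partial v_2$; this would force an additional alternating region between $v_1$ and $v_2$ locally near one of $p_\alpha, p_\beta$, contradicting the directness of the arrows $\alpha$ and $\beta$ there. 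Axiom (P4) then ensures that the two strand arcs bounding $f$ are oriented consistently around $f$, so that $f$ is indeed an oriented (not alternating) region.

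Once the bigon $f$ is identified, its fundamental cycle is precisely $\alpha\beta$ or $\beta\alpha$, and the local picture around $f$ is exactly the left-hand configuration of the upper row of \figref{f:untwist} when both $p_\alpha, p_\beta$ are interior crossings, or the left-hand configuration of the lower row when one of them is a boundary marked point. The case in which both points are boundary marked points is ruled out quickly: the two alternating regions at a boundary point are boundary regions, and in a connected Postnikov diagram (where each boundary region meets $\partial\Sigma$ in a single arc) two distinct boundary regions meet in at most one boundary marked point. In every remaining case, the bigon is the input to an untwisting move, so $D$ is not reduced. The main obstacle is making the bigon identification step rigorous: it requires a careful local-to-global argument combining the alternation condition (P2) with (P4) to rule out additional strands passing through the candidate region $f$.
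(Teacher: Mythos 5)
Your overall strategy matches the paper's: prove the contrapositive, reducing to the claim that a $2$-cycle in $Q_D$ produces a bigon oriented region (a ``lens'') and hence an untwisting move, and dispose of the case of two frozen arrows by counting boundary regions. The structure, and the boundary-regions case, match the paper's proof (which is itself very terse, disposing of the main case with a bare reference to Figures~\ref{f:untwist} and~\ref{f:untwist-quiv}).

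The gap you flag --- making the bigon identification rigorous --- is genuine, and the route you sketch for closing it does not work. You argue that an additional strand entering the putative bigon would produce a crossing on $\partial v_1$ or $\partial v_2$ and thereby ``force an additional alternating region between $v_1$ and $v_2$ locally near one of $p_\alpha, p_\beta$, contradicting the directness of the arrows $\alpha$ and $\beta$ there.'' But such a crossing, and the alternating region that appears as the opposite quadrant there, can sit in the \emph{interior} of the arc of $\partial v_1$ from $p_\alpha$ to $p_\beta$, far from either endpoint; the local picture at $p_\alpha$ and $p_\beta$, where $v_1$ and $v_2$ remain diagonally opposite quadrants, is entirely unaffected, so no contradiction with $\alpha$ or $\beta$ arises. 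Indeed, one can write down an abstract planar ice quiver satisfying the alternation condition at each vertex and with consistently oriented fundamental cycles, in which $v_1$ and $v_2$ carry a $2$-cycle while the region bounded by the two arrows contains two extra bivalent mutable vertices and a triangle--quadrilateral--triangle chain of oriented faces with no bigon anywhere; this shows the bigon cannot be extracted by the kind of purely local argument you are attempting. Excluding such a configuration for a genuine Postnikov diagram requires tracing the zig-zag paths and invoking (P0) and (P3): in the offending configuration a strand would be forced either to close into an interior loop or to pass through the same crossing along two distinct pairs of arcs, violating those axioms. Neither your proposal nor the paper's appeal to the figures spells this out, so the central step of your argument remains unproven, and the specific reasoning you offer to bridge it is incorrect.

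One smaller point: your dismissal of the case where both $p_\alpha, p_\beta$ are boundary marked points implicitly uses $n\geq 3$ (two boundary regions sharing both endpoints of their boundary arcs would exhaust the circle, giving $n=2$); the paper makes this explicit, and you should too.
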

\begin{proof}
A $2$-cycle in $Q_D$ containing at least one unfrozen arrow corresponds to a possible untwisting move (see Figures~\ref{f:untwist} and \ref{f:untwist-quiv}), and so do not occur when $D$ is reduced. A $2$-cycle consisting only of frozen arrows would correspond to a pair of boundary regions meeting at two points on the boundary of the disc, which must then be the only boundary regions. But $D$ has at least $3$ boundary regions, so this is also impossible.
\end{proof}

Before moving on to the homological part of the paper, we discuss one additional feature of the dimer algebra $A_D$ of a connected Postnikov diagram $D$. For each vertex $v$ of $Q_D$, choose a path $t_v\colon v\to v$ representing a fundamental cycle. Writing $t=\sum_{v\in Q_0}t_v\in A_D$, it follows from connectedness of $D$ that $t$ does not depend on the choices of paths $t_v$. Moreover, $t$ is a central element of the algebra. Thus $A_D$ has the structure of an algebra over $Z=\powser{\KK}{t}$; the abuse of notation in identifying the abstract generator of $Z$ with the element $t\in A_D$, thus making the $Z$-action clear, is justified by the following result, which is also fundamental to the arguments in Section~\ref{s:calabi-yau}. The proof is close to that of Baur--King--Marsh \cite{baurdimer} for the case that $D$ is a $(k,n)$-diagram, and we refer to their paper when the arguments apply without change to our more general setting.

\begin{prop}
\label{p:thin}
The dimer algebra $A_D$ of a connected Postnikov diagram $D$ is thin, meaning that for any vertices $v$ and $w$ there is some $p\in\idemp{w}A_D\idemp{v}$ such that
\[\idemp{w}A_D\idemp{v}=Zp,\]
and moreover that $Zp$ is a free $Z$-module of rank $1$.
Moreover, there is a unique such $p$ expressible as the image of a path from $v$ to $w$ under the projection $\cpa{\KK}{Q_D}\to A_D$.
\end{prop}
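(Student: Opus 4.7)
The plan is to follow the strategy of Baur--King--Marsh~\cite{baurdimer} for the $(k,n)$-diagram case, making the modifications needed for a general connected Postnikov diagram. The proof has three steps: existence of a path from $v$ to $w$, reduction to a canonical representative modulo $t$, and freeness of the resulting $Z$-module.

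For existence, \Cref{p:strongly-connected} already provides a directed path from $v$ to $w$ in $Q_D$. The key local identity is that for each unfrozen arrow $a$, the two fundamental cycles $af$ and $ag$ containing $a$ (one anticlockwise, one clockwise) satisfy $\der{a}{W_D}=f-g$, and hence $f=g$ in $A_D$. An immediate consequence, proved by an induction that sweeps across the oriented faces of $D$ sharing a vertex with $v$, is that every fundamental cycle through $v$ coincides in $A_D$ with the chosen representative $t_v$. Connectedness of $D$ ensures that the dual graph of oriented faces is connected, so that a compatible choice of $t_v$ at every vertex can be made and the global element $t=\sum_v t_v$ is both well-defined and central.

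With the local identities in hand, I would then show by induction on path length that any two paths $p$ and $q$ in $Q_D$ from $v$ to $w$ satisfy $t^n\bar p = t^m\bar q$ in $A_D$ for some $m,n\geq 0$, where overlines denote images in $A_D$. Planarity of $D$ is crucial here: two paths between the same two vertices bound a union of oriented faces of $D$, and each face can be traversed using a single local identity, at the cost of a factor of $t$. Iterating yields a minimal path $\minpath{v}{w}$ from $v$ to $w$ whose image in $A_D$ generates $\idemp{w}A_D\idemp{v}$ as a $Z$-module, and uniqueness of $\minpath{v}{w}$ up to the above moves follows from running the same argument on two minimal representatives.

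The main obstacle is establishing freeness: that $t^k\minpath{v}{w}\neq 0$ in $A_D$ for every $k\geq 0$. This requires an invariant detecting $t$-multiplicity. Following Baur--King--Marsh, my plan is to exhibit an auxiliary weighting on the paths of $Q_D$, compatible with the Jacobian relations, such that the classes of $t^k\minpath{v}{w}$ for distinct $k$ lie in linearly independent components of a suitable associated graded algebra. Connectedness is used once more to guarantee that such a weighting can be consistently chosen, and hence that $t$ acts injectively on each space $\idemp{w}A_D\idemp{v}$. The uniqueness claim at the end of the proposition then follows because any other path representative from $v$ to $w$ must, by the second step, be $\minpath{v}{w}$ times a positive power of $t$, which is not a path image unless that power is zero.
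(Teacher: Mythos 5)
Your overall strategy matches the paper's: the paper also follows Baur--King--Marsh, citing \cite[Prop.~9.3]{baurdimer} to show that any two paths from $v$ to $w$ differ by a power of $t$ in $A_D$, and using the weight grading of \cite[\S4]{baurdimer} (with \cite[Cor.~4.4]{baurdimer} for the fact that fundamental cycles have constant weight) to establish freeness. So the plan to reuse BKM's two-path reduction and a compatible grading is exactly right.

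There are, however, two concrete issues. First, for the two-path reduction you write that ``planarity of $D$ is crucial'' and that two paths bound a union of oriented faces. This is not quite what makes the argument go through: the relevant argument (BKM's Prop.~9.3) does not literally bound a disc between arbitrary paths, and the paper takes care to note that the property of Postnikov diagrams that makes the proof valid outside the $(k,n)$-diagram case is axiom (P4), not planarity alone. Your sketch needs to engage with (P4) explicitly or defer to the BKM proof as the paper does; as written, the inductive ``traverse each face once'' picture is too loose to be a proof. Second, your closing uniqueness argument is wrong: you claim that $t^N \minpath{v}{w}$ ``is not a path image unless $N=0$.'' But $t$ is itself represented by a path (a fundamental cycle), so $t^N\minpath{v}{w}$ is always the image of a path. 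The correct argument for uniqueness is: if $q$ is another path class with $Zq=\idemp{w}A_D\idemp{v}$, then from the two-path reduction $q=t^{N}\minpath{v}{w}$ and $\minpath{v}{w}=t^{M}q$ for some $M,N\geq 0$, so $\minpath{v}{w}=t^{M+N}\minpath{v}{w}$; freeness then forces $M+N=0$, hence $q=\minpath{v}{w}$.
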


\begin{proof}
As in \cite[\S4]{baurdimer}, we may weight the arrows of $Q=Q_D$ by elements of the set $\set{1,\dotsc,n}$ labelling the boundary marked points of $\Sigma$. An arrow $a$ of $Q$ is crossed by two strands of $D$, say that starting at marked point $i$ from right to left, and that starting at $j$ from left to right. We then weight $a$ by the (indicator function of) the cyclic interval $[i,j-1]$ (cf.\ \cite[Defn.~4.1]{baurdimer}). A path in $Q$ is weighted by the sum $\wt_p$ of weights of its arrows, and its total weight is defined to be $\sum_{i=1}^n\wt_p(i)$, which is always at least $1$ if the path has non-zero length.

The proof of \cite[Cor.~4.4]{baurdimer}, stated for $(k,n)$-Postnikov diagrams, remains valid in our more general setting to show that every fundamental cycle has constant weight $w(i)=1$ for all $1\leq i\leq n$. If $p_+=p_-$ is one of the defining relations of $A_D$, then there is an arrow $a\in Q_1$ such that both $a p_+$ and $a p_-$ are fundamental cycles, from which it follows that the weights of $p_+$ and $p_-$ agree. It follows that the weight, and hence the total weight, descends to a grading of $A$.

Now let $v,w\in Q_0$. Since $Q$ is strongly connected as in Proposition~\ref{p:strongly-connected}, there is some path from $v$ to $w$ in $Q$, and we choose $p$ to be such a path with minimal total weight. If $q$ is any other path from $v$ to $w$, then \cite[Prop.~9.3]{baurdimer} applies to show that there is a path $r\colon v\to w$ and non-negative integers $N_p$ and $N_q$ such that
\[p=t^{N_p}r,\qquad q=t^{N_q}r\]
in $A_D$. As before, this proposition is stated only in the case that $D$ is a $(k,n)$-Postnikov diagram, but its proof is still valid under our weaker assumptions---the key property of $D$ here is (P4).

Since the total weight of $t$ is non-zero, and $p$ has minimal total weight among paths from $v$ to $w$, we must have $N_p=0$ and $p=r$ in $A_D$. Thus $q=t^{N_q}p$, and we see that $e_wAe_v$ is generated over $Z$ by $p$. It is moreover freely generated since each element of $\{t^Np:N\geq0\}$ has a different total weight, which implies that these elements are linearly independent in $A_D$.
\end{proof}

\begin{defn}
\label{d:minpath}
We call a path $p\colon v\to w$ in $Q_D$ a \emph{minimal path} if $\idemp{w}A_D\idemp{v}=Zp$. Proposition~\ref{p:thin} then states that there is a minimal path between any pair of vertices in $Q_D$, and that any two such paths define the same element of $A_D$. We write $\minpath{v}{w}$ for the class in $A_D$ of any minimal path from $v$ to $w$.
\end{defn}

Thinness of $A_D$, as in Proposition~\ref{p:thin}, is the analogue of algebraic consistency for dimer models on  the torus, as described by Broomhead \cite[Def.~5.12]{broomheaddimer} (the analogue of geometric consistency, cf.\ \cite[Prop.~3.12]{broomheaddimer}, being conditions (P3) and (P4) in the definition of a Postnikov diagram). The following observation about minimal paths is straightforward but useful.

\begin{lem}
\label{l:min-subpath}
Let $D$ be a connected Postnikov diagram, and let $p\colon u\to v$ and $q\colon v\to w$ be paths in $Q_D$. If the composition $qp$ is a minimal path, then both $p$ and $q$ are minimal paths.
\end{lem}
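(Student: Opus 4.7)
The plan is to leverage the freeness statement in Proposition~\ref{p:thin} together with the centrality of the element $t$. Since $t$ commutes with everything, decomposing $p$ and $q$ into powers of $t$ times minimal paths is permissible, and the minimality of $qp$ then forces all those powers to vanish.

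First I would invoke Proposition~\ref{p:thin} three times, to obtain free $Z$-module generators $\minpath{u}{v}$, $\minpath{v}{w}$ and $\minpath{u}{w}$ of $\idemp{v}A_D\idemp{u}$, $\idemp{w}A_D\idemp{v}$ and $\idemp{w}A_D\idemp{u}$ respectively. Since $p$ and $q$ are images of paths in the corresponding idempotent subspaces, we may write
\[p = t^{N}\minpath{u}{v},\qquad q = t^{M}\minpath{v}{w},\]
for some $N,M\geq 0$; nonnegativity here is part of the content of Proposition~\ref{p:thin} (in its proof, every path from $v$ to $w$ is shown to be of the form $t^{N}r$ with $r$ minimal and $N\geq 0$). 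Similarly, the composition $\minpath{v}{w}\minpath{u}{v}$ lies in $\idemp{w}A_D\idemp{u}$, so there is an integer $K\geq 0$ with
\[\minpath{v}{w}\minpath{u}{v} = t^{K}\minpath{u}{w}.\]

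Using that $t$ is central in $A_D$, I would then compute
\[qp = t^{M}\minpath{v}{w}\cdot t^{N}\minpath{u}{v} = t^{N+M+K}\minpath{u}{w}.\]
Now the hypothesis that $qp$ is minimal means precisely that $Z\cdot qp = \idemp{w}A_D\idemp{u} = Z\cdot \minpath{u}{w}$. Since $\idemp{w}A_D\idemp{u}$ is a free $Z$-module of rank one, this forces $t^{N+M+K}$ to be a unit in $Z = \powser{\KK}{t}$, which in turn forces $N+M+K=0$. As all three exponents are nonnegative, we conclude $N=M=K=0$, whence $p=\minpath{u}{v}$ and $q=\minpath{v}{w}$ are themselves minimal.

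There is essentially no obstacle; the only subtle point is the nonnegativity of the exponents $N$, $M$, $K$, but this is immediate from the total-weight argument already established in the proof of Proposition~\ref{p:thin}, since $t$ has strictly positive total weight and minimal paths realise the minimum total weight in each $\idemp{w}A_D\idemp{v}$.
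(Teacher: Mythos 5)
Your proof is correct and follows essentially the same route as the paper's: both write $p$, $q$ as nonnegative powers of $t$ times minimal paths via Proposition~\ref{p:thin}, absorb the extra exponent $K$ (the paper's $\delta$) from composing the two minimal paths, and conclude from minimality of $qp$ that the total exponent must vanish. Your added remark on why $t^{N+M+K}$ must be a unit in $\powser{\KK}{t}$ is a slightly more explicit justification of the step the paper covers with ``by Proposition~\ref{p:thin} again,'' but the argument is the same.
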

\begin{proof}
By Proposition~\ref{p:thin}, there are $m,n\geq0$ such that $p=t^m\minpath{u}{v}$ and $q=t^n\minpath{v}{w}$ in $A_D$. Thus $qp=t^{m+n}\minpath{v}{w}\minpath{u}{v}=t^{m+n+\delta}\minpath{u}{w}$ for some $\delta\geq0$. But $qp$ is minimal, so by Proposition~\ref{p:thin} again we must have $m+n+\delta=0$, and so in particular $m=n=0$. Hence both $p$ and $q$ are minimal paths.
\end{proof}

\section{\texorpdfstring{The Calabi--Yau property}{The Calabi–Yau property}}
\label{s:calabi-yau}

Let $A=A_D$ be the dimer algebra of a connected Postnikov diagram $D$. In this section we show that this algebra is bimodule internally $3$-Calabi--Yau, in the sense of \cite{presslandinternally}, with respect to the idempotent corresponding to the boundary regions of $D$, thus proving Theorem~\ref{t:cy-intro}. Since $A$ is presented as a frozen Jacobian algebra, we will prove this statement via \cite[Thm.~5.6]{presslandinternally}, which tells us that it is enough to check the exactness of a certain complex of $A$-bimodules.

Throughout the proof, we will write
\[\Head{v}=\set{a\in Q_1:\head{a}=v},\qquad\Tail{v}=\set{a\in Q_1:\tail{a}=v}\]
for the set of arrows with head, respectively tail, at $v\in Q_0$, and
\[\mHead{v}=\Head{v}\cap Q_1^{\mut},\qquad\mTail{v}=\Tail{v}\cap Q_1^{\mut}\]
for their intersections with the set $Q_1^\mut$ of unfrozen arrows.

As well as using the fact that $A$ is thin, as in Proposition~\ref{p:thin}, we will also rely on the following gradability property of $A$, which allows us to deduce exactness of the relevant complex of bimodules from the exactness of certain complexes of (one-sided) $A$-modules.

\begin{prop}
\label{p:grading}
The dimer algebra $A_D$ of a connected Postnikov diagram $D$ admits a grading in which all arrows have positive degrees.
\end{prop}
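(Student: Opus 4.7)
The proof will reuse almost verbatim the weighting construction introduced in the proof of Proposition~\ref{p:thin}. The plan is to take the arrow weights $\wt_\alpha\in\ZZ^n$ defined there (namely, $\wt_\alpha$ is the indicator function of the cyclic interval $[i,j-1]$ where $i,j$ label the two strands crossing at $\alpha$) and sum their components to produce a $\ZZ$-valued weight on arrows; the main task is to verify that this assignment descends to $A_D$ and that every arrow receives a strictly positive weight.

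More precisely, I would first extend the weight additively to paths in $\cpa{\KK}{Q_D}$, and define the total weight of a path $p$ as $\wt(p)=\sum_{i=1}^n \wt_p(i)\in\ZZ_{\geq 0}$. The key observation, already recorded inside the proof of Proposition~\ref{p:thin}, is that every fundamental cycle has weight vector $(1,1,\dotsc,1)$, and consequently for each defining relation $\der{\alpha}W_D=0$ with summands $p_+,p_-$, one has $\alpha p_\pm$ both fundamental cycles, forcing $\wt(p_+)=\wt(p_-)$. Hence total weight is constant on each homogeneous component of every defining relation, so it descends to a $\ZZ$-grading on the quotient $A_D=\cpa{\KK}{Q_D}/\close{\Span{\der{\alpha}W_D:\alpha\in Q_1\setminus F_1}}$.

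It then remains to check that each arrow $\alpha\in Q_1$ has strictly positive total weight. This is immediate: the cyclic interval $[i,j-1]$ is non-empty (it contains at least the element $i$, since the two strands crossing transversally at $\alpha$ have distinct labels $i\ne j$), so $\wt(\alpha)\geq 1$. Thus the grading has the required property.

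The only subtlety is ensuring that the weight assignment is well-defined on arrows and that the indexing labels $i,j$ are distinct. The former follows from conditions (P0) and (P1) on strand crossings in Definition~\ref{d:postnikov-diag}: exactly two strands cross transversally at any crossing point, so the labels $i,j$ are well-defined; and by (P0) distinct strands have distinct starting marked points, so $i\ne j$. No new estimate or combinatorial computation beyond what already appears in the proof of Proposition~\ref{p:thin} is needed, which is why this proposition can be stated immediately after that one with only a brief argument.
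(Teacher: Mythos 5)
Your proposal is correct and takes exactly the same approach as the paper, which simply refers back to the total-weight grading already introduced in the proof of Proposition~\ref{p:thin}. The details you supply (fundamental cycles have constant weight, hence relations are homogeneous; arrows have non-empty weight intervals since the two strands at a crossing have distinct sources) are precisely what makes that cross-reference work.
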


\begin{proof}
The algebra $A_D$ may be graded by total weights of paths, as discussed in the proof of Proposition~\ref{p:thin}. As already mentioned in that proof, each arrow has positive total weight.
\end{proof}
%
%\ckpoverlap{The fact that the sets $\mu(v)$ and $\mu'(v)$ of arrows appearing in this proof contain exactly one arrow in each fundamental cycle means that these sets are \emph{perfect matchings} (meaning that the corresponding sets of edges and half-edges of the bipartite graph $G$ form perfect matchings in the natural sense), and the projective $A\idemp{v}$ is a \emph{perfect matching module}. These ideas will be explored more thoroughly in \cite{canakciperfect}, where we we also consider non-projective perfect matching modules.}
%
%The proof of Proposition~\ref{p:grading}, like most arguments in this section, depends on the fact that $A$ is thin, as in Proposition~\ref{p:thin}.

Most of the arguments in this section will depend on the fact that $A$ is thin, as in Proposition~\ref{p:thin}, so that each piece $\idemp{w}A\idemp{v}$ is freely generated over $Z=\powser{\KK}{t}$ by a minimal path. Before introducing the complexes of $A$-modules whose exactness implies the required $3$-Calabi--Yau property of $A$, we give one more lemma, concerning these paths.

\begin{lem}
\label{l:nb-lemma}
Let $v,w\in Q_0$ with $v$ mutable, and consider a minimal path $\minpath{v}{w}\colon v\to w$. Then there is some $a\in\Head{v}$ such that $\minpath{v}{w}a=\minpath{,\tail{a}}{w}$ in $A$.
\end{lem}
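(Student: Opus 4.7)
My plan is to use the thinness of $A$ (Proposition~\ref{p:thin}) together with the fact that every fundamental cycle at $v$ factors as $a\cdot r$ for some $a \in \Head{v}$ and some path $r\colon v \to \tail{a}$, and then derive a contradiction from the structural consequences if no suitable $a$ exists.

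First I would fix $a\in\Head{v}$ together with a representative fundamental cycle $t_v = a\cdot r$. Right-multiplying $p = \minpath{v}{w}$ by this cycle yields $(pa)\cdot r = p \cdot t_v = tp$, using the centrality of $t$. Writing $pa = t^{N_a}\minpath{\tail{a}}{w}$ and $\minpath{\tail{a}}{w}\cdot r = t^{M_a}p$ by thinness (both sides lie in the appropriate homs, which are rank-one free $Z$-modules), the identity becomes $t^{N_a + M_a}p = tp$. Since $e_wAe_v$ is free of rank one over $Z$, this forces $N_a + M_a = 1$, so either $N_a = 0$ (and we are done, as this is exactly the required identity) or else $N_a = 1$ and $M_a = 0$.

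Suppose for contradiction that the second alternative holds for every $a \in \Head{v}$. The condition $M_a = 0$ says $\minpath{\tail{a}}{w} \cdot \minpath{v}{\tail{a}} = p$ in $A$, and since both sides have the same total weight (namely the minimum achievable by paths $v\to w$), the uniqueness clause of Proposition~\ref{p:thin} forces them to agree as paths in $Q_D$. Hence for every $a\in\Head{v}$ the initial segment of $p$ of appropriate length coincides with $\minpath{v}{\tail{a}}$, and in particular every such $\minpath{v}{\tail{a}}$ begins with the first arrow $\alpha_1$ of $p$. Equivalently, for every $a \in \Head{v}$ the concatenation $a \cdot \minpath{v}{\tail{a}}$ is a cycle at $v$ of total weight $n$ starting with $\alpha_1$ and ending with $a$.

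The main obstacle is the final step: deriving a contradiction from the simultaneous existence of such cycles for every $a \in \Head{v}$. My plan is to argue that in a disc Postnikov diagram any cycle at $v$ of total weight $n$ must be a fundamental cycle, using conditions (P3) and (P4) and the geometry of zig-zag paths to rule out exotic weight-$n$ closed walks. Granting this, only the two faces of $D$ adjacent to $\alpha_1$ contribute fundamental cycles at $v$ beginning with $\alpha_1$, so the hypothesis forces $|\Head{v}| \leq 2$; the remaining low-valence cases, corresponding to $v$ being a lens-shaped region, I would treat separately by using the Jacobian relation $\partial_{\alpha_1}W_D = 0$, which identifies the remainders of the two fundamental cycles through $\alpha_1$ in $A$, to rule out the forced configurations of $p$.
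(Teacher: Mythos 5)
Your opening calculation is sound and closely parallels the paper's: composing $p=\minpath{v}{w}$ with a fundamental cycle $t_v=ar$, using centrality of $t$ and thinness, yields $N_a+M_a=1$, which matches the paper's relation $\delta+m+n=1$. The divergence — and the problem — comes in how you try to derive a contradiction when $N_a=1$ for every $a\in\Head{v}$.

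The first gap is the appeal to ``the uniqueness clause of Proposition~\ref{p:thin}'' to conclude that $\minpath{,\tail{a}}{w}\cdot\minpath{v}{\tail{a}}$ and $p$ ``agree as paths in $Q_D$.'' Proposition~\ref{p:thin} asserts that the minimal \emph{element} of $\idemp{w}A\idemp{v}$ is unique, not that the \emph{path} in $Q_D$ representing it is unique. Many different paths in $Q_D$, with different initial arrows, can project to the same minimal element of $A_D$ (this is exactly what the Jacobian relations $\der{a}W=0$ buy you). So you cannot conclude that the fundamental cycles $a\cdot r_a$ for different $a\in\Head{v}$ all begin with the same arrow $\alpha_1$; in fact, $r_a$ is a concrete path determined by the fundamental cycle, and different $a$'s in $\Head{v}$ give fundamental cycles starting with different arrows of $\Tail{v}$. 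The second gap is the final step, which you flag yourself: the claim that every cycle of total weight $n$ at $v$ is a fundamental cycle is neither proved nor true in general (a cycle equal to $te_v$ in $A_D$ need not be a single fundamental cycle as a path), and the promised treatment of low-valence cases via $\der{\alpha_1}W_D=0$ is not carried out. What your proposal is missing is precisely the geometric input that the paper relies on: it uses mutability of $v$ to produce, among the compositions $m_a\cdot(\text{rest of cycle})$, two paths that bound a digon containing $v$ in its interior and define the same element of $A$, and then applies Broomhead's Lemma~6.17 to deform one of them across the digon to a minimal path passing through $v$, from which the desired arrow is extracted. Some such geometric argument (or an alternative) seems unavoidable, and your proposal does not supply it.
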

\begin{proof}
We first observe that if $p\colon u\to w$ is a minimal path in $Q$ passing through the vertex $v$, then $p$ includes our desired arrow as follows. Write $p=p_2ap_1$ for some $a\in\Head{v}$. Then by Lemma~\ref{l:min-subpath} each subpath of $p$ is also minimal, so in $A$ we have $p_2=\minpath{v}{w}$ and $p_2a=\minpath{,\tail{a}}{w}$, and hence $a$ is our desired arrow.

Now for each $a\in\Head{v}$, choose a minimal path $m_a\colon\tail{a}\to w$. If any of these paths passes through $v$, then we find our desired arrow as above, so we assume the contrary. Since $v$ is mutable, each $b\in\Tail{v}$ is involved in two fundamental cycles $a_+pb$ and $a_-qb$ with $a_{\pm}\in\Head{v}$, and we can write $p'=m_{a_+}p$ and $q'=m_{a_-}q$. By our assumption on the $m_a$, neither $p'$ nor $q'$ passes through $v$.

Using again that $v$ is mutable, the union of fundamental cycles containing $v$ is a disc with $v$ in the interior, and so there must be some $b$ for which the paths $p'$ and $q'$ are two sides of a digon containing $v$ (cf.\ the argument in \cite[Lem.~6.17]{broomheaddimer}). From now on, we assume that we are in this situation, shown in Figure~\ref{f:digon-config}.

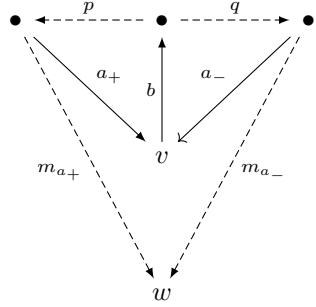
\begin{figure}[h]
\begin{tikzcd}[column sep=40pt,row sep=40pt]
\bullet\arrow[quivarrow,dashed]{ddr}[swap]{m_{a_+}}\arrow[quivarrow]{dr}{a_+}&\bullet\arrow[quivarrow,dashed]{l}[swap]{p}\arrow[quivarrow,dashed]{r}{q}&\bullet\arrow{dl}[swap]{a_-}\arrow[quivarrow,dashed]{ddl}{m_{a_-}}\\
&v\arrow[quivarrow]{u}{b}\\
&w
\end{tikzcd}
\caption{Fundamental cycles $a_+pb$ and $a_-qb$ together with minimal paths $m_{a_{\pm}}\colon\tail{a_{\pm}}\to w$ such that $p'=m_{a_+}p$ and $q'=m_{a_-}q$ form a digon containing $v$. Solid arrows represent arrows in $Q$, whereas dashed arrows represent paths.}
\label{f:digon-config}
\end{figure}

Observe that $\minpath{v}{w}a_+=t^{\delta}\minpath{,\tail{a_+}}{w}$ for some $\delta\geq0$ by Proposition~\ref{p:thin}. If $\delta=0$, then $a_+$ was our desired arrow, so assume $\delta>0$. In a similar way, we find $m,n\geq0$ such that $\minpath{,\tail{a_+}}{w}p=t^m\minpath{,\head{b}}{w}$ and $\minpath{,\head{b}}{w}b=t^n\minpath{v}{w}$. Since $pba_+$ is a fundamental cycle, we must have
\[t\minpath{,\tail{a_+}}{w}=\minpath{,\tail{a_+}}{w}pba_+=t^m\minpath{,\head{b}}{w}ba_+=t^{m+n}\minpath{v}{w}a_+=t^{m+n+\delta}\minpath{,\tail{a_+}}{w}.\]
By Proposition~\ref{p:thin} again, we must have $m+n+\delta=1$, and so we conclude from positivity of $\delta$ that $m=n=0$ (and $\delta=1$). In particular, this means that $p'=m_{a_+}p$ is a minimal path, representing $\minpath{,\head{b}}{w}$. Repeating the argument with $a_-$ and $q$ replacing $a_+$ and $p$ we see that either $a_-$ is our desired arrow or $q'=m_{a_-}q$ is also a minimal path representing $\minpath{,\head{b}}{w}$.

In the latter case, $p'$ and $q'$ are paths from $\head{b}$ to $w$ defining the same element of $A$, namely $\minpath{,\head{b}}{w}$, and bounding a digon containing $v$. It is then a consequence of \cite[Lem.~6.17]{broomheaddimer} that there is a path $r\colon\head{b}\to w$ passing through $v$, and also defining $\minpath{,\head{b}}{w}$ in $A$. In particular, $r$ is a minimal path passing through $v$, and so we may find our desired arrow as in the first paragraph of the proof.
\end{proof}

We now turn to the main part of the argument. In \cite[\S5]{presslandinternally}, it is explained how an ice quiver with potential $(Q,F,W)$ determines a complex of projective bimodules for the associated frozen Jacobian algebra $A=\frjac{Q}{F}{W}$; we will denote this complex by $\res{A}$, although strictly it depends on the presentation of $A$ determined by $(Q,F,W)$. For the convenience of the reader, we repeat its definition here, following \cite[\S4]{presslandcategorification}. 

Let $S=A/\close{J}$, where $J$ is the the ideal generated by arrows. As a left $A$-module, $S$ is the direct sum of the vertex simple modules, and has a basis consisting of the vertex idempotents $e_v$. For the remainder of this section, we write $\tens=\tens_S$.

Introduce formal symbols $\rho_a$ for each $a\in Q_1$ and $\omega_v$ for each $v\in Q_0$, and define $S$-bimodule structures on the vector spaces
\begin{align*}
\KK Q_0&=\bigdsum_{v\in Q_0}\KK\idemp{v},&
\KK Q_1&=\bigdsum_{a\in Q_1}\KK a,&
\KK Q_2^\mut&=\bigdsum_{a\in Q_1^\mut}\KK\rel{a},&
\KK Q_3^\mut&=\bigdsum_{v\in Q_0^\mut}\KK\omega_v,
\end{align*}
via the formulae
\[
\idemp{v}\cdot\idemp{v}\cdot\idemp{v}=\idemp{v},\qquad
\idemp{\head{a}}\cdot a\cdot\idemp{\tail{a}}=a,\qquad
\idemp{\tail{a}}\cdot\rel{a}\cdot\idemp{\head{a}}=\rel{a},\qquad
\idemp{v}\cdot\omega_v\cdot\idemp{v}=\omega_v.
\]
We recall here that $Q_0^\mut=Q_0\setminus F_0$ and $Q_1^\mut=Q_1\setminus F_1$ are the sets of mutable vertices and unfrozen arrows respectively.

Since $\KK Q_0$ is naturally isomorphic to $S$ as a bimodule, there is a natural isomorphism
\[A\tens\KK Q_0\tens A\isoto A\tens A,\]
which we can compose with the multiplication map for $A$ to obtain a map $\mu_0\colon A\tens\KK Q_0\tens A\to A$. Define $\mu_1\colon A\tens\KK Q_1\tens A\to A\tens\KK Q_0\tens A$ by
\[\mu_1(x\tens a\tens y)=x\tens\idemp{\head{a}}\tens a y-xa\tens\idemp{\tail{a}}\tens y.\]
For any path $p=a_m\dotsm a_1$ in $Q$, we may define
\[\Delta_a(p)=\sum_{a_i=a}a_m\dotsm a_{i+1}\tens a_i\tens a_{i-1}\dotsm a_1,\]
and extend by linearity and continuity to obtain a map $\Delta_a\colon\cpa{\KK}{Q}\to A\tens\KK Q_1\tens A$. We then define $\mu_2\colon A\tens\KK Q_2^{\mut}\tens A\to A\tens \KK Q_1\tens A$ by
\[\mu_2(x\tens\rel{a}\tens y)=\sum_{b\in Q_1}x\Delta_b(\der{a}{W})y.\]
Finally, define $\mu_3\colon A\tens \KK Q_3^{\mut}\tens A\to A\tens \KK Q_2^{\mut}\tens A$ by
\[\mu_3(x\tens\omega_v\tens y)=\sum_{a\in\Tail{v}}x\tens\rel{a}\tens a y-\sum_{b\in\Head{v}}xb\tens\rel{b}\tens y.\]
As $\Tail{v}\union\Head{v}\subseteq Q_1^\mut$ for any $v\in Q_0^\mut$, this map has the claimed codomain.
\begin{defn}
\label{frjacres}
For an ice quiver with potential $(Q,F,W)$, with frozen Jacobian algebra $A=\frjac{Q}{F}{W}$, let $\res{A}$ be the complex of $A$-bimodules with non-zero terms
\[\begin{tikzcd}
A\tens \KK Q_3^\mut\tens A\arrow{r}{\mu_3}&A\tens \KK Q_2^\mut\tens A\arrow{r}{\mu_2}&A\tens \KK Q_1\tens A\arrow{r}{\mu_1}&A\tens\KK Q_0\tens A
\end{tikzcd}\]
and $A\tens\KK Q_0\tens A$ in degree $0$.
\end{defn}

Using this complex, we can give a sufficient condition for the frozen Jacobian algebra to be bimodule internally $3$-Calabi--Yau \cite[Defn.~2.4]{presslandinternally} with respect to the idempotent determined by the vertices of $F$. This condition will suffice as a definition for the purposes of the present paper.

\begin{thm}[{\cite[Thm.~5.7]{presslandinternally}}]
\label{frjaci3cy}
If $A$ is a frozen Jacobian algebra such that
\begin{equation}
\label{eq:bimod-res}
\begin{tikzcd}
0\arrow{r}&\res{A}\arrow{r}{\mu_0}&A\arrow{r}&0
\end{tikzcd}
\end{equation}
is exact, then $A$ is bimodule internally $3$-Calabi--Yau with respect to the idempotent $e=\sum_{v\in F_0}\idemp{v}$.
\end{thm}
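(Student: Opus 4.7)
The plan is to use the hypothesis exactly as stated: the complex $\res{A}\to A\to 0$ is a resolution of $A$ by projective $A$-bimodules, since each term $A\tens\KK Q_i\tens A$ (respectively $A\tens \KK Q_i^\mut\tens A$) decomposes as a direct sum of bimodules of the form $A\idemp{v}\tens \idemp{w}A$, which are projective over $A^e=A\tens_\KK A^\op$. So the first step is cost-free: we already have a finite projective bimodule resolution of $A$ of length at most $3$.

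The core of the argument is then to apply the duality $(-)^\vee=\RHom_{A^e}(-,A^e)$ term by term and identify the resulting complex with a shift of $\res{A}$, modulo terms which live in $\add(Ae\tens eA)$; this is precisely what the bimodule internal $3$-Calabi--Yau condition demands. The key point is that under the pairings
\[\idemp{v}\leftrightarrow \omega_v\quad(v\in Q_0^\mut),\qquad a\leftrightarrow \rel{a}\quad(a\in Q_1^\mut),\]
together with the symmetry $e_{\head{a}}(-)e_{\tail{a}}\leftrightarrow e_{\tail{a}}(-)e_{\head{a}}$ on the other tensor factor, one gets natural isomorphisms $(A\tens \KK Q_i^\mut\tens A)^\vee\iso A\tens \KK Q_{3-i}^\mut\tens A$ for $i\in\{0,3\}$ and $i\in\{1,2\}$, at least when restricted to the mutable part. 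The pieces of $A\tens\KK Q_0\tens A$ and $A\tens\KK Q_1\tens A$ indexed by frozen vertices and frozen arrows are responsible for exactly the summands in $\add(Ae\tens eA)$ that must be quotiented out.

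Next, I would verify that, under these pairings, the differentials $\mu_1,\mu_2,\mu_3$ of $\res{A}$ intertwine (up to sign) with their duals. The check for $\mu_2^\vee\sim\mu_2$ reduces to the cyclic symmetry of the second derivatives $\Delta_b\partial_a W$, which is built into the definition and is essentially what makes frozen Jacobian algebras behave Calabi--Yau-like in the first place. The checks for $\mu_3^\vee\sim\mu_1$ (mutable part) then amount to matching the formula
\[\mu_3(x\tens\omega_v\tens y)=\sum_{a\in\Tail{v}}x\tens\rel{a}\tens ay-\sum_{b\in\Head{v}}xb\tens\rel{b}\tens y\]
with the pairing defining $\mu_1$; the two signs in the sum correspond exactly to the two terms of $\mu_1(x\tens a\tens y)=x\tens e_{\head{a}}\tens ay - xa\tens e_{\tail{a}}\tens y$ after duality.

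The main obstacle is bookkeeping the boundary terms, i.e.\ the contributions of the frozen vertices and frozen arrows to $A\tens\KK Q_0\tens A$ and $A\tens\KK Q_1\tens A$ which have no mutable counterparts in $\KK Q_3^\mut$ and $\KK Q_2^\mut$. I would handle this by showing that after applying $(-)^\vee$ and comparing with $\res{A}$, the discrepancy is a complex of bimodules that factor through the idempotent $e$ (hence lies in $\add(Ae\tens eA)$), and that this is precisely the structure captured by the internal $3$-Calabi--Yau property with respect to $e$ as defined in \cite{presslandinternally}. Since the general definition of the internal property is tailored to admit exactly this kind of boundary discrepancy, the proof reduces to the algebraic matching of the two complexes described above, with no further homological input needed beyond the hypothesised exactness of \eqref{eq:bimod-res}.
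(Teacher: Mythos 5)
The paper does not prove this theorem itself but cites it as Theorem~5.7 of \cite{presslandinternally}; your proposal correctly sketches the argument used there. The exactness hypothesis yields a finite projective bimodule resolution of length at most $3$, and the bimodule internal $3$-Calabi--Yau property is established by identifying $\RHom_{A^e}(\res{A},A^e)$ with $\res{A}$ shifted, via the pairings $\idemp{v}\leftrightarrow\omega_v$ and $a\leftrightarrow\rel{a}$ on the mutable parts and the head--tail swap on the projective factors, the frozen vertices and arrows contributing exactly the summands in $\add(Ae\tens eA)$ permitted by the internal version of the condition. Your remarks on matching $\mu_3^\vee$ with $\mu_1$ and on the cyclic symmetry of $\Delta_b\der{a}W$ underlying the self-duality of $\mu_2$ are precisely the computations carried out in the cited source.
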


\begin{rem}
\label{r:standard-res}
By standard results on presentations of algebras, as in Butler--King \cite{butlerminimal}, we need only check exactness at the two left-most non-zero terms of $\res{A}$ (in degrees $-3$ and $-2$), since the rest of the complex \eqref{eq:bimod-res} is the standard projective bimodule presentation of an algebra defined by a quiver with relations.
\end{rem}

\begin{prop}
\label{p:exactness-reduction}
For $A=A_D$ the dimer algebra of a connected Postnikov diagram, the complex \eqref{eq:bimod-res} is exact in degree $d$ if and only if the complex 
\begin{equation}
\label{eq:proj-res-simples}
\begin{tikzcd}
0\arrow{r}&\res{A}\tens_AS\arrow{r}{\mu_0\tens S}&S\arrow{r}&0
\end{tikzcd}
\end{equation}
is exact in degree $d$, if and only if the complex
\begin{equation}
\label{eq:proj-res-Sv}
\begin{tikzcd}
0\arrow{r}&\res{A}\tens_A\simp{v}\arrow{r}{\mu_0\tens\simp{v}}&\simp{v}\arrow{r}&0
\end{tikzcd}
\end{equation}
is exact in degree $d$ for any $v\in Q_0$, where $\simp{v}$ denotes the simple left $A$-module at $v$. 
\end{prop}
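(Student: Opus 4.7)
The equivalence of \eqref{eq:proj-res-simples} and \eqref{eq:proj-res-Sv} at a given degree $d$ is immediate: the left $A$-module $S$ decomposes as $S = \bigoplus_{v \in Q_0} \simp{v}$, so the complex $\res{A} \tens_A S$ is the direct sum over $v \in Q_0$ of the complexes $\res{A} \tens_A \simp{v}$. Exactness at degree $d$ of a finite direct sum of complexes is equivalent to exactness at that degree of each summand.

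For the equivalence of \eqref{eq:bimod-res} and \eqref{eq:proj-res-simples}, the essential tool is the positive grading on $A$ from Proposition~\ref{p:grading}, under which $A_0 = S$. Each bimodule $P_i = A \tens_S \KK Q_i \tens_S A$ in $\res{A}$ inherits a grading in which the basis elements $a$, $\rel{a}$, and $\omega_v$ can be placed in suitable degrees so that every differential $\mu_i$ is homogeneous; each $P_i$ is then graded free on both sides, on generators lying in bounded degrees.

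The plan is to compare the two complexes using this graded framework. For the forward implication, projectivity of each $P_i$ (and of the augmentation $A$) as a right $A$-module allows one to apply $-\tens_A S$ to the short exact sequences extracted at degree $d$ from \eqref{eq:bimod-res} and preserve exactness. For the reverse implication, one uses a graded Nakayama argument: writing $H_d$ for the homology of \eqref{eq:bimod-res} at degree $d$—a bounded-below graded $A$-bimodule—one shows, via the hyperhomology spectral sequence $E_2^{p,q} = \operatorname{Tor}_p^A(H_q(\res{A}), S) \Rightarrow H_{p+q}(\res{A} \tens_A S)$ and an induction working inward from the extreme positions of the complex (where the spectral sequence degenerates for reasons of shape), that the vanishing of $H_d(\res{A} \tens_A S)$ forces $H_d \tens_A S = 0$. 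Graded Nakayama's lemma, applied on the right to the bounded-below graded module $H_d$ over the positively graded algebra $A$ with $A_0 = S$, then yields $H_d = 0$. The chief technical obstacle is this spectral-sequence bookkeeping: I expect to handle it either by a careful inductive degeneration argument or, more concretely, by using that each $P_i$ is also left-projective and choosing graded bimodule bases for the $P_i$, which reduces the vanishing question, at each fixed total degree, to a finite-dimensional linear-algebra statement.
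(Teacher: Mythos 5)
Your proof is correct and takes the same route as the paper, which for the first equivalence simply cites Broomhead~\cite[Prop.~7.5]{broomheaddimer} and its extension to the frozen setting in \cite[\S4]{presslandcategorification}; you are in effect supplying the details of that citation, and your direct-sum argument for the second equivalence is identical to the paper's.

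On the one point where you hedge: the spectral-sequence bookkeeping does go through. Since (by Remark~\ref{r:standard-res}) the complex $\res{A}\to A$ is already exact at the two right-most positions, the lowest row of $E_2^{p,q}=\operatorname{Tor}_p^A(H_q,S)$ that can be non-zero is the row containing $H_q\tens_A S$ at $p=0$; that corner entry receives no differentials (all would originate in a lower, vanishing row) and emits none (targets have $p<0$), so it survives to $E_\infty$, is killed by the hypothesis $H_{p+q}(\res{A}\tens_A S)=0$, and graded Nakayama then clears that row and lets the induction proceed. That said, the spectral sequence is heavier machinery than strictly necessary: your second suggestion is closer to what Broomhead actually does. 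Once exactness is known up to homological degree $d-1$, all kernels $K_i=\ker(\mu_{i-1})$ for $i<d$ are graded projective right $A$-modules and all the relevant short exact sequences split, so one checks directly that $H_d(\res{A}\tens_A S)\iso H_d(\res{A})\tens_A S$, from which graded Nakayama gives $H_d(\res{A})=0$ without any spectral sequence. Either route is fine; only the grading and Nakayama are essential.
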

\begin{proof}
Exactness of \eqref{eq:bimod-res} and of \eqref{eq:proj-res-simples} are equivalent because of the existence of a grading as in Proposition~\ref{p:grading}, by an argument that is essentially due to Broomhead \cite[Prop.~7.5]{broomheaddimer}. The reader can find an explanation of how this argument extends to the case of frozen Jacobian algebras in \cite[\S4]{presslandcategorification}. The second equivalence is simply the observation that the complex \eqref{eq:proj-res-simples} is the direct sum of the complexes \eqref{eq:proj-res-Sv}.
\end{proof}

It will be useful to introduce some notation for elements of terms of $\res{A}\tens_A\simp{v}$, following \cite[\S4]{presslandcategorification}. First note that we have isomorphisms
\begin{align}
\label{eq:resA-isos}
\begin{split}
A\tens\KK Q_1\tens A\tens_A\simp{v}&\iso\bigdsum_{b\in\Tail{v}}Ae_{\head{b}},\\
A\tens\KK Q_2^\mut\tens A\tens_A\simp{v}&\iso\bigdsum_{a\in\mHead{v}}Ae_{\tail{a}},\\
A\tens\KK Q_3^\mut\tens A\tens_A\simp{v}&\iso{\begin{cases}A\idemp{v},&v\in Q_0^\mut,\\0,&v\in F_0.\end{cases}}
\end{split}
\end{align}

In the first two cases, the right-hand sides are of the form $\bigoplus_{a\in S}A\idemp{\delta a}$, where $S$ is a set of arrows, and $\delta\colon S\to Q_0$. The map $A\idemp{\delta a}\to\bigoplus_{a\in S}A\idemp{\delta a}$ including the domain as the summand indexed by $a$ will be denoted by $x\mapsto x\otimes[a]$; this helps us to distinguish these various inclusions when $\delta$ is not injective. As a consequence, a general element of the direct sum is
\begin{equation}
\label{eq:dsum-notation}
x=\sum_{a\in S}x_a\otimes[a]
\end{equation}
with $x_a\in A\idemp{\delta a}$.

The most complicated map in the complex $\res{A}$ is $\mu_2$, so we will spell out $\mu_2\tens_A\simp{v}$ explicitly. Using the isomorphisms from \eqref{eq:resA-isos}, and our notation for elements of the direct sums, we have
\[(\mu_2\tens_A\simp{v})(x)=\sum_{b\in\Tail{v}}\bigg(\sum_{a\in\mHead{v}}x_a\rightder{b}{\der{a}{W}}\bigg)\tens[b],\]
where $\rightder{b}$, called the \emph{right derivative} with respect to $b$, is defined on paths by
\begin{equation}
\label{eq:rightder}
\rightder{b}(a_k\dotsm a_1)=\begin{cases}a_k\dotsm a_2,&a_1=b,\\0,&a_1\ne b\end{cases}
\end{equation}
and extended linearly and continuously. Similarly, there is an \emph{left derivative}, defined on paths by
\begin{equation}
\label{eq:leftder}
\leftder{b}(a_k\dotsm a_1)=\begin{cases}a_{k-1}\dotsm a_1,&a_k=b,\\0,&a_k\ne b.\end{cases}
\end{equation}
Given a Postnikov diagram $D$ and two arrows $a$ and $b$ of $Q_D$, we may observe that
\begin{equation}
\label{eq:der-interchange}
\rightder{b}{\der{a}{W}}=\leftder{a}{\der{b}{W}}.
\end{equation}
Indeed, there are at most two fundamental cycles in $Q_D$ containing $a$, and so the part of $W$ consisting of terms containing both $a$ and $b$ is of the form $apb-aqb$ for some $p$ and $q$ which are either paths, not containing the arrows $a$ or $b$, or are zero. Thus one can directly calculate each side of \eqref{eq:der-interchange}, which both result in $p-q$.

We are now ready to prove our main theorem.

\begin{thm}
\label{t:bi3cy}
For $A=A_D$ the dimer algebra of a connected Postnikov diagram $D$, the complex \eqref{eq:bimod-res} is exact, and hence $A$ is bimodule internally $3$-Calabi--Yau with respect to the idempotent determined by the boundary vertices.
\end{thm}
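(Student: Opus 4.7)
The plan is to verify the hypothesis of Theorem~\ref{frjaci3cy} by establishing exactness of the bimodule complex \eqref{eq:bimod-res}. Applying Proposition~\ref{p:exactness-reduction}, this reduces to showing that for each $v\in Q_0$ the complex $\res{A}\tens_A\simp{v}\to\simp{v}\to 0$ is exact, and by Remark~\ref{r:standard-res} only the two leftmost non-zero terms (in degrees $-3$ and $-2$) require attention. Using the isomorphisms \eqref{eq:resA-isos}, a direct computation would make the relevant maps explicit: the first sum in $\mu_3\tens_A\simp{v}$ vanishes because $Q_D$ has no loops, leaving $(\mu_3\tens_A\simp{v})(x)=-\sum_{b\in\Head{v}}xb\tens[b]$ for $x\in A\idemp{v}$ (a nontrivial term only when $v\in Q_0^\mut$), while $\mu_2\tens_A\simp{v}$ is as displayed in the excerpt.

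The key structural ingredient is thinness (Proposition~\ref{p:thin}), which decomposes each $A\idemp{v'}$ as $\bigdsum_w Z\cdot\minpath{v'}{w}$ and thereby splits the complex under consideration, over target vertices $w$, into a direct sum of complexes of rank-$1$ free $Z$-modules. The identity $C=t\idemp{v}$ in $A$ for every fundamental cycle $C$ at $v$ now produces a combinatorial identity: defining $\gamma_{w,c}\in\ZZ_{\geq 0}$ for any arrow $c$ by $\minpath{\head{c}}{w}\cdot c=t^{\gamma_{w,c}}\minpath{\tail{c}}{w}$, one deduces $\sum_{c\in C}\gamma_{w,c}=1$. Injectivity of $\mu_3\tens_A\simp{v}$ is then almost immediate: fixing any $b\in\Head{v}$ (non-empty by Proposition~\ref{p:strongly-connected}) and expanding $x=\sum_w z_w(t)\minpath{v}{w}$, the vanishing of $xb$ forces $z_w(t)t^{\gamma_{w,b}}=0$ in the domain $Z$ for each $w$, whence $x=0$.

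Exactness at degree $-2$ for $v\in Q_0^\mut$ is the main computation. Decomposing by target, I would first verify, using the cycle identity, that for each $w$ the canonical element $\xi_w=(t^{\gamma_{w,a}}\minpath{\tail{a}}{w})_{a\in\Head{v}}$ lies in the kernel of $\mu_2\tens_A\simp{v}$ and agrees with $-(\mu_3\tens_A\simp{v})(\minpath{v}{w})$ on the $w$-component. Lemma~\ref{l:nb-lemma} then furnishes some $a_0\in\Head{v}$ with $\gamma_{w,a_0}=0$, so the $a_0$-component of $\xi_w$ is minimal. Given any kernel element with $a$-component $y_{w,a}(t)\minpath{\tail{a}}{w}$, setting $z:=y_{w,a_0}(t)$ and inducting around the cyclic order of arrows at $v$ starting from $a_0$, each step uses one kernel relation together with the cycle identity (applied to the two faces on either side of a given $\beta_j\in\Tail{v}$) to cancel a positive power of $t$ in the domain $Z$ and conclude $y_{w,a}(t)=z\cdot t^{\gamma_{w,a}}$; reassembling over $w$ then exhibits the kernel element as the image under $\mu_3\tens_A\simp{v}$ of the corresponding $x\in A\idemp{v}$.

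The frozen case $v\in F_0$ requires a separate argument, since the degree-$(-3)$ term vanishes and one must show $\ker(\mu_2\tens_A\simp{v})=0$ outright. The two frozen arrows incident to $v$ yield two boundary kernel relations of the form $y_a\cdot q=0$ at the two ends of the cyclic chain of unfrozen in-arrows at $v$, because a frozen arrow does not contribute a $y$-term to the corresponding sum; thinness expresses each $q$ as a nonzero $Z$-multiple of a minimal path, so these relations force the two endpoint $y_a$ to vanish, and the remaining interior relations then propagate the vanishing to every summand. The main obstacle will be the combinatorial bookkeeping in identifying $\rightder{b}\der{a}{W_D}$ in terms of fundamental cycles and the cyclic order of arrows at $v$, and in particular in correctly handling the interaction between the cyclic order, the alternating orientation of clockwise versus anticlockwise regions, and the truncation of this structure at a frozen vertex; once these combinatorics are set up, the proof is driven entirely by thinness, the cycle identity, and Lemma~\ref{l:nb-lemma}.
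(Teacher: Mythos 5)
Your proposal is correct and follows essentially the same strategy as the paper's proof: reduce via Remark~\ref{r:standard-res} and Proposition~\ref{p:exactness-reduction} to exactness of $\res{A}\tens_A\simp{v}$ in degrees $-3$ and $-2$, decompose each term over target vertices $w$ using thinness (Proposition~\ref{p:thin}), exploit the fundamental-cycle identity $\sum_c\delta_c=1$ and Lemma~\ref{l:nb-lemma}, and propagate the coefficient identification $y_{w,a}=z\,t^{\delta_a}$ around the cyclic order of arrows at $v$ (the paper phrases this as showing the coefficients $z_n^{w,a}$ are independent of $a$, which is the same statement). Your treatment of the frozen case, using the degenerate relations at the two ends of the linear chain of unfrozen arrows at a boundary vertex, is likewise equivalent to the paper's use of the configurations in \eqref{f:boundary-config}.
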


\begin{proof}
As already noted in Remark~\ref{r:standard-res}, the complex \eqref{eq:bimod-res} can only fail to be exact in degrees $-2$ and $-3$, and so, by Proposition~\ref{p:exactness-reduction}, we need only check that the complex \eqref{eq:proj-res-Sv} is exact at the terms $A\tens\KK Q_3^\mut\tens\simp{v}$ and $A\tens\KK Q_2^\mut\tens\simp{v}$ for each $v\in Q_0$.

We first deal with exactness at $A\tens\KK Q_3^\mut\tens\simp{v}$. This term is zero unless $v\in Q_0^\mut$, so we may additionally assume this. By the third isomorphism in \eqref{eq:resA-isos}, we then have $A\tens\KK Q_3^\mut\tens\simp{v}\cong A\idemp{v}$, so let $x\in A\idemp{v}$. We calculate
\[(\mu_3\otimes\simp{v})(x)=\sum_{a\in\mHead{v}}xa\otimes[a],\]
using the notation \eqref{eq:dsum-notation} for elements of $A\tens\KK Q_2^\mut\tens\simp{v}\cong\bigdsum_{a\in\mHead{v}}A\idemp{\tail{a}}$. Thus $x\in\ker(\mu_3\otimes\simp{v})$ if and only if $xa=0$ for all $a\in\mHead{v}$. Note that since $v$ is mutable, $\mHead{v}=\Head{v}\neq\varnothing$.

Let $w\in Q_0$, so $\idemp{w}x\in\idemp{w}A\idemp{v}$. Since $A$ is thin by Proposition~\ref{p:thin}, we have $\idemp{w}A\idemp{v}=Z\minpath{v}{w}$, so we may write $\idemp{w}x=\sum_{n\in\NN}z_nt^n\cdot\minpath{v}{w}$ for some sequence of scalars $z_n\in\KK$. Using thinness again, for any $a\in\mHead{v}$ there is some $\delta_a\in\NN$ such that $\minpath{v}{w}a=t^{\delta_a}\minpath{,\tail{a}}{w}$. It then follows that
\[\idemp{w}xa=\sum_{n\in\NN}z_nt^n\cdot\minpath{v}{w}a=\sum_{n\in\NN}z_nt^{n+\delta_a}\cdot\minpath{,\tail{a}}{w}.\]
Thus if $xa=0$, we have $z_n=0$ for all $n$, and so $\idemp{w}x=0$. Since $w$ was chosen arbitrarily, we conclude that $x=0$, and so the kernel of $\mu_3\otimes\simp{v}$ is trivial. This establishes exactness of \eqref{eq:proj-res-Sv} at the term $A\tens\KK Q_3^\mut\tens\simp{v}$.

We now move to the term $A\tens\KK Q_2^\mut\tens\simp{v}$. Using the relevant isomorphisms in \eqref{eq:resA-isos} and the notation of \eqref{eq:dsum-notation}, a general element of this term is of the form $\phi=\sum_{a\in\mHead{v}}x_a\tens[a]$ for $x\in A\idemp{\tail{a}}$,
and its image under $\mu_2\tens\simp{v}$ is
\[(\mu_2\tens\simp{v})(\phi)=\sum_{b\in\Tail{v}}\Bigg(\sum_{a\in\mHead{v}}x_a\rightder{b}{\der{a}{W}}\Bigg)\tens[b]=\sum_{b\in\Tail{v}}\Bigg(\sum_{a\in\mHead{v}}x_a\leftder{a}{\der{b}{W}}\Bigg)\tens[b],\]
where the second equality follows from \eqref{eq:der-interchange}. The reader is warned that, since $b\in\Tail{v}$ may be frozen, the derivative $\der{b}{W}$ is not necessarily zero in $A$.

Now assume that $(\mu_2\tens\simp{v})(\phi)=0$, or equivalently by the above calculation that
\begin{equation}
\label{eq:mu2image}
\sum_{a\in\mHead{v}}x_a\leftder{a}{\der{b}{W}}=0
\end{equation}
for all $b\in\Tail{v}$. Picking $w\in Q_0$ and multiplying by $\idemp{w}$ on the left, we obtain elements $\idemp{w}x_a\in\idemp{w}A\idemp{\tail{a}}$ for each $a\in\mHead{v}$.

As in the first part of the proof, we will now use thinness of $A$ to write the elements $\idemp{w}x_a$ as the product of a power series in $t$ with $\minpath{,\tail{a}}{w}$, but in order to keep the notation clean in the subsequent argument, we will do this in a slightly unusual way. First note that, having fixed the vertex $w$, any path $p$ determines $\delta_p\in\NN$ with the property that $\minpath{,\head{p}}{w}p=t^{\delta_p}\minpath{,\tail{p}}{w}$. Since this equality is formulated in the algebra $A$, the quantity $\delta_p$ depends only on the class of $p$ in the algebra. %It may also be checked directly from the definition that $\delta_{qp}=\delta_q+\delta_p$ when $p$ and $q$ are composable paths.

Using this notation, we can write
\begin{equation}
\label{eq:t-expansion}
\idemp{w}x_a=t^{\delta_a}\sum_{n\in\ZZ}z_n^{w,a}t^n\cdot\minpath{,\tail{a}}{w}
\end{equation}
for some scalars $z_n^{w,a}\in\KK$. Since the right-hand side of this expression may only involve non-negative powers of $t$, we have $z_n^{w,a}=0$ whenever $n<-\delta_a$.

The key step in the argument is to show, under our assumption that $(\mu_2\tens\simp{v})(\phi)=0$, that the coefficient $z_n^{w,a}$ depends only on $n$ and $w$, and not on $a\in\mHead{v}$. We consider the configuration
\begin{equation}
\label{eq:arrow-config}
\begin{tikzcd}[column sep=40pt,row sep=40pt]
&v\arrow[quivarrow]{d}{b}\\
\bullet\arrow[quivarrow,dotted]{dr}[swap]{\idemp{w}x_-}\arrow[quivarrow]{ur}{a_-}&\bullet\arrow[quivarrow,dashed]{l}[swap]{q}\arrow[quivarrow,dashed]{r}{p}&\bullet\arrow{ul}[swap]{a_+}\arrow[quivarrow,dotted]{dl}{\idemp{w}x_+}\\
&w
\end{tikzcd}
\end{equation}
Here solid arrows represent arrows in $Q_1$, dashed arrows represent paths in $Q$, and dotted arrows represent linear combinations of paths. Our assumption is that $a_+$ and $a_-$ are unfrozen arrows, with head at $v$ as depicted, which appear in the two fundamental cycles $a_+pb$ and $a_-qb$ containing the arrow $b$, also unfrozen, with tail at $v$. As indicated by the notation in \eqref{eq:arrow-config}, we will usually omit the letter $a$ when $a_+$ or $a_-$ appear in subscripts or superscripts in the following argument---for example this means that $x_+:=x_{a_+}$. Our aim is to show that $z_n^{w,+}=z_n^{w,-}$ for all $n\in\ZZ$; it will then follow from the way that the quiver $Q$ is embedded in the disc that $z_n^{w,a}$ is independent of $a\in\mHead{v}$, since if $a,a'\in\mHead{v}$, we can find sequences $a_0,\dotsc,a_k$ and $b_1\cdots b_k$ such that $a_0=a$, $a_k=a'$, and for all $1\leq i\leq k$ the triple of arrows $(a_{i-1},b_i,a_i)$ has the same configuration as $(a_-,b,a_+)$ in \eqref{eq:arrow-config} (or its mirror image), so that $z_n^{w,a_{i-1}}=z_n^{w,a_i}$.

So consider the configuration shown in \eqref{eq:arrow-config}. We see from the figure that $\der{b}{W}=a_+p-a_-q$. While it can happen that $a_+=a_-$ (if and only if $v$ is a bivalent vertex of $Q$), in this case there is nothing to prove, so we may assume $a_+\ne a_-$. Then for $a\in\mHead{v}$, we have
\[\leftder{a}{\der{b}{W}}=\begin{cases}p,&a=a_+,\\-q,&a=a_-,\\0,&\text{otherwise}.\end{cases}\]
Thus
\[\sum_{a\in\mHead{v}}x_a\leftder{a}{\der{b}{W}}=x_+p-x_-q.\]
Since $\phi=\sum_{a\in\mHead{v}}x_a\tens[a]$ is in the kernel of $\mu_2\tens\simp{v}$, this quantity is zero by \eqref{eq:mu2image}, and so $x_+p=x_-q$. It follows that
\begin{align*}
\idemp{w}x_+p&=t^{\delta_+}\sum_{n\in\ZZ}z_n^{w,+}t^n\cdot\minpath{,\tail{a_+}}{w}p=t^{\delta_++\delta_p}\sum_{n\in\ZZ}z_n^{w,+}t^n\cdot\minpath{,\head{b}}{w},\\
\idemp{w}x_-q&=t^{\delta_-}\sum_{n\in\ZZ}z_n^{w,-}t^n\cdot\minpath{,\tail{a_-}}{w}q=t^{\delta_-+\delta_q}\sum_{n\in\ZZ}z_n^{w,-}t^n\cdot\minpath{,\head{b}}{w},
\end{align*}
must also be equal. Since $a_+p=a_-q$ in $A$ (because of the relation $\der{b}{W}$), we must have $\delta_++\delta_p=\delta_-+\delta_q$, and so we can conclude, for any $n\in\ZZ$, that $z_n^{w,+}=z_n^{w,-}$, and further that $z_n^{w,a}$ is independent of $a\in\mHead{v}$, as required. From now on, we abbreviate $z_n^w:=z_n^{w,a}$, and for any $a\in\mHead{v}$ we have
\begin{equation}
\label{eq:t-expansion2}
\idemp{w}x_a=t^{\delta_a}\sum_{n\in\ZZ}z_n^wt^n\cdot\minpath{,\tail{a}}{w}.
\end{equation}

We now return to our main purpose, to show that the complex \eqref{eq:bimod-res} is exact. We recall that we are assuming that $\phi=\sum_{a\in\mHead{v}}x_a\tens[a]$ satisfies $(\mu_2\tens\simp{v})(\phi)=0$, and wish to show that $\phi$ is in the image of $\mu_3\tens\simp{v}$.

First we deal with the case that $v$ is a frozen vertex, in which case $\mu_3\tens\simp{v}=0$, and we wish to conclude that $x_a=0$ for all $a\in\mHead{v}$. This is equivalent to showing that the complex numbers $z_n^w$ appearing in \eqref{eq:t-expansion2} are zero for all $n$ and all $w$.

It follows from the construction of $(Q_D,F_D)$ that, since $v$ is frozen, it is incident with a frozen arrow. If $\mHead{v}=\varnothing$ then we have nothing to show, and otherwise there is an arrow $a\in\mHead{v}$ as in one of the following configurations
\begin{equation}
\label{f:boundary-config}
\begin{tikzcd}[column sep=40pt,row sep=40pt]
\bullet\arrow[quivarrow]{dr}{a}&&\bullet\arrow[quivarrow,dashed,swap]{d}{q}\arrow[quivarrow,dashed]{r}{p}&\bullet\arrow[quivarrow]{d}{a}\\
\diamond\arrow[quivarrow,dashed]{u}{p}&v\arrow[frozarrow]{l}{b}&\diamond\arrow[frozarrow,swap]{r}{c}&v\arrow[quivarrow,swap]{ul}{b}
\end{tikzcd}
\end{equation}
in which $apb$ and $cqb$ are fundamental cycles. As earlier in the paper, the bold arrows are frozen, and the dashed arrows represent paths. The vertices indicated by diamonds are frozen, like the vertex $v$, whereas the others may be mutable or frozen.

In the two configurations in \eqref{f:boundary-config} we have $\der{b}{W}=\pm ap$ and $\der{b}{W}=\pm(ap-cq)$ respectively. Thus in either case we calculate for $a'\in\mHead{v}$ that
\[\leftder{a'}{\der{b}{W}}=\begin{cases}\pm p,&a'=a,\\0,&\text{otherwise}.\end{cases}\]
Since $(\mu_2\tens\simp{v})(\phi)=0$, we deduce from \eqref{eq:mu2image} that
\[x_ap=\pm\sum_{a'\in\mHead{v}}x_a\leftder{a'}\der{b}{W}=0.\]
Multiplying on the left by $\idemp{w}$ and using that $A$ is thin, we see that
\[0=\idemp{w}x_ap=t^{\delta_a}\sum_{n\in\ZZ}z_n^wt^n\cdot\minpath{,\tail{a}}{w}p=t^{\delta_a+\delta_p}\sum_{n\in\ZZ}z_n^wt^n\cdot{\minpath{,\head{b}}{w}},\]
and so $z_n^w=0$ for all $n$ and $w$, as required.

Now we treat the case that $v$ is a mutable vertex; in this case $\mHead{v}=\Head{v}$ and we use the latter to keep the notation simpler. In this case, we want to construct $y\in A\idemp{v}$ such that
\[(\mu_3\tens\simp{v})(y)=\sum_{a\in\Head{v}}ya\tens[a]=\phi.\]
Using the sequence of complex numbers $z_n^w$ appearing in \eqref{eq:t-expansion2}, we write
\[y_w=\sum_{n\in\ZZ}z_n^wt^n\cdot\minpath{v}{w}.\]
For this to be a well-defined element of $\idemp{w}A\idemp{v}$, we need $z_n^w=0$ for all $n<0$. Choosing any $a\in\Head{v}$, we have that $z_n^w=z_n^{w,a}$ is zero whenever $n<-\delta_a$, as in \eqref{eq:t-expansion}. But by Lemma~\ref{l:nb-lemma}, there is some $a\in\Head{v}$ such that $\delta_a=0$, and so $y_w\in\idemp{w}A\idemp{v}$ as required.

Now, letting $y=\sum_{w\in Q_0}y_w\in A\idemp{v}\cong A\tens\KK Q_3^\mut\tens\simp{v}$, we can calculate
\begin{align*}
(\mu_3\tens\simp{v})(y)=\sum_{w\in Q_0}\sum_{a\in\Head{v}}y_wa\tens[a]&=\sum_{w\in Q_0}\sum_{a\in\Head{v}}\Bigg(\sum_{n\in\NN}z_n^wt^n\cdot\minpath{v}{w}a\Bigg)\tens[a]\\
&=\sum_{w\in Q_0}\sum_{a\in\Head{v}}\Bigg(\sum_{n\in\NN}z_n^wt^{n+\delta_a}\cdot\minpath{,\tail{a}}{w}\Bigg)\tens[a]\\
&=\sum_{w\in Q_0}\sum_{a\in\Head{v}}\idemp{w}x_a\tens[a]=\sum_{a\in\Head{v}}x_a\tens[a]=\phi,
\end{align*}
and so we conclude that $\phi\in\im(\mu_3\tens\simp{v})$, as required.
\end{proof}

\section{Categorification}
\label{s:categorification}

Given a suitable algebra $A$ which is bimodule internally $3$-Calabi--Yau with respect to an idempotent $e$, one can construct from $A$ and $e$ a Frobenius category having many desirable properties from the point of view of categorifying cluster algebras. This is done via the main result of \cite{presslandinternally} (specialised to the case that the Calabi--Yau dimension is $3$), which we will recall after collecting the necessary algebraic definitions. Applied to the dimer algebra $A_D$ of a connected Postnikov diagram $D$, this will allow us to show (Theorem~\ref{t:categorification}) that the category $\GP(B_D)$ appearing in Theorem~\ref{t:mainthm} has many of the properties required of an additive categorification of the cluster algebra $\clustalg{D}$. Further such properties will be proved in Section~\ref{s:cluster-character}.

\begin{defn}
\label{d:cat-defs}
Let $\frobcat$ be an exact $\KK$-linear category, as in \cite{buehlerexact}. We do not recall the full definition of this structure here, but simply note that any full extension-closed subcategory $\frobcat$ of an abelian category $\abcat$ has a natural structure of an exact category, by taking the class of short exact sequences to be the short exact sequences of $\abcat$ with terms in $\frobcat$.

We say $\frobcat$ is a \emph{Frobenius category} if it has enough projective objects and enough injective objects, and these two classes of objects coincide. The \emph{stable category} $\stab{\frobcat}$ is obtained from $\frobcat$ by factoring out the ideal of morphisms which factor over a projective (equivalently injective) object, and has a natural triangulated structure \cite[\S I.2]{happeltriangulated}. We say that $\stab{\frobcat}$ is \emph{$2$-Calabi--Yau}, and $\frobcat$ is \emph{stably $2$-Calabi--Yau}, if $\stab{\frobcat}$ is Hom-finite and there is a functorial isomorphism
\[\Hom_{\stab{\frobcat}}(X,Y)=\Kdual\Hom_{\stab{\frobcat}}(Y,\Sigma^2 X)\]
for any two objects $X$ and $Y$ of $\stab{\frobcat}$, where $\Sigma$ is the shift functor in the triangulated structure of $\stab{\frobcat}$ (computed by taking the cokernel of an injective envelope) and $\Kdual$ is duality over the ground field $\KK$. 

A full subcategory $\ctcat\subset\frobcat$ is called \emph{cluster-tilting} if it is functorially finite and
\[\ctcat=\{X\in\frobcat:\Ext^1_\frobcat(X,T)=0\ \forall\ T\in \ctcat\}=\{X\in\frobcat:\Ext^1_\frobcat(T,X)=0\ \forall\ T\in\ctcat\}.\]
Note that cluster-tilting subcategories are necessarily additively closed, i.e.\ closed under direct sums and summands. We can make the same definition in $\stab{\frobcat}$, using the usual definition $\Ext^1_{\stab{\frobcat}}(M,N)=\Hom_{\stab{\frobcat}}(M,\Sigma N)$ for triangulated categories, and observe that a subcategory $\ctcat\subset\frobcat$ containing all projective objects is cluster-tilting if and only if its image in the stable category $\stab{\frobcat}$ (i.e.\ the full subcategory of $\stab{\frobcat}$ on the same set of objects as $\ctcat$) is cluster-tilting. When $\frobcat$ is stably $2$-Calabi--Yau, the second equality in the definition of a cluster-tilting subcategory is automatic. An object $T\in\frobcat$ is a \emph{cluster-tilting object} if its additive closure $\add{T}$ is a cluster-tilting subcategory.

An algebra $B$ is called \emph{Iwanaga--Gorenstein} if it is Noetherian and has finite injective dimension both as a left and as a right module over itself. In this case these two injective dimensions coincide \cite{iwanagarings2}, and we call this number the \emph{Gorenstein dimension} of $B$. For an Iwanaga--Gorenstein algebra $B$, we define the category of \emph{Gorenstein projective} $B$-modules to be the full subcategory
\[\GP(B)=\set{X\in\fgmod{B}:\text{$\Ext^i_B(X,B)=0$ for all $i>0$}}\]
of the abelian category of finitely generated $B$-modules. This is a Frobenius category; see for example \cite[\S4]{buchweitzmaximal}, where this category is denoted by $\MCM(B)$ and its objects called maximal Cohen--Macaulay modules.
\end{defn}

\begin{rem}
The reader is warned that, when $B$ is free and finitely generated over $Z=\powser{\KK}{t}$, as will be the case in our examples, one can define the (maximal) Cohen--Macaulay $B$-modules to be those $B$-modules which are free and finitely generated over $Z$, as in \cite[\S3]{jensencategorification}. While a module for the algebra $C$ defining the Grassmannian cluster category is Gorenstein projective if and only if it is Cohen--Macaulay \cite[Cor.~3.7]{jensencategorification}, this will not be the case for $B$ in general (see Example~\ref{eg:CM-not-GP}). To avoid having two non-equivalent definitions of Cohen--Macaulay $B$-modules, we prefer the terminology `Gorenstein projective' for the homological condition. The reader is however further warned that Gorenstein projective modules are defined over an arbitrary ring (as in \cite[Defn.~2.1]{holmgorenstein} for example), but in a way that need not be equivalent to Definition~\ref{d:cat-defs} if the ring is not Iwanaga--Gorenstein.
\end{rem}

\begin{thm}[{\cite[Thm.~4.1, Thm.~4.10]{presslandinternally}}]
\label{t:icytofrobcat}
Let $A$ be an algebra, and $e\in A$ an idempotent. If $A$ is Noetherian, $\stab{A}=A/\Span{e}$ is finite-dimensional, and $A$ is bimodule internally $3$-Calabi--Yau with respect to $e$, then
\begin{enumerate}
\item $B=eAe$ is Iwanaga--Gorenstein with Gorenstein dimension at most $3$,
\item $eA$ is a cluster-tilting object in the Frobenius category $\GP(B)$,
\item the stable category $\stabGP(B)$ is a $2$-Calabi--Yau triangulated category, and
\item the natural maps $A\to\Endalg{B}{eA}$ and $\stab{A}\to\stabEndalg{B}{eA}$ are isomorphisms.
\end{enumerate}
\end{thm}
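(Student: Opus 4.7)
The plan is to derive all four conclusions from the bimodule projective resolution
\[0\to P_3\to P_2\to P_1\to P_0\to A\to 0\]
provided by the internal $3$-Calabi--Yau hypothesis, together with the associated self-duality $P_\bullet\simeq\RHom_{A^e}(P_\bullet,A^e)[3]$ that holds in the derived category of bimodules, modulo $\Span{e}$. For the Gorenstein statement (1), I would apply the exact functor $e(-)e$ to this resolution. Each summand $Ae_u\tens e_v A$ of some $P_i$ is sent to $Be_u\tens e_v B$ when $u,v\in F_0$, and to zero otherwise, so every term $eP_ie$ is a projective $B$-bimodule. The key verification is exactness of the resulting complex, which I would prove by converting the self-duality into a vanishing statement for the relevant $\mathrm{Tor}$ groups over $A$, using finite-dimensionality of $\stab A$ to kill the obstructions. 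This yields a length-$3$ projective bimodule resolution of $B$, from which the Iwanaga--Gorenstein property with Gorenstein dimension at most $3$ follows by standard homological arguments.

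For (4), right multiplication defines a natural algebra homomorphism $\psi\colon A\to\Endalg{B}{eA}$. Injectivity would be established by noting that $\ker\psi$ is a two-sided ideal annihilating $eA$ on the right, and hence lies in $\Span{e}$; the CY duality combined with faithfulness of $eA$ on the complementary idempotent then forces it to be zero. Surjectivity comes from a Morita-type comparison: the adjunction pair $eA\tens_B(-)\dashv\Hom_B(eA,-)$ combined with the bimodule resolution makes any $B$-module endomorphism of $eA$ lift uniquely to an element of $A$. The stable version $\stab A\isoto\stabEndalg{B}{eA}$ follows from the same argument after identifying the ideal $\Span{e}$ in $A$ with the ideal of endomorphisms of $eA$ factoring through $\add(eA)$ in $\GP(B)$.

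For (2) and (3), I would first verify that $eA\in\GP(B)$ by dualising the right-module resolution $P_\bullet e\to Ae$ obtained from the bimodule resolution, and applying the CY duality to show $\Ext^{>0}_B(eA,B)=0$. Rigidity $\Ext^1_B(eA,eA)=0$ follows directly from the same duality, while the maximality condition defining cluster-tilting uses the equivalence from (4): if $X\in\GP(B)$ is $\Ext^1$-orthogonal to $eA$, then $\Hom_B(eA,X)$ is projective as an $A$-module, which under the equivalence forces $X\in\add(eA)$. Finally, the $2$-Calabi--Yau property of $\stabGP(B)$ is obtained by transferring the internal $3$-CY Serre duality on $A$-bimodules through this equivalence, with one degree absorbed by the shift between $\GP(B)$ and its stable quotient.

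The main obstacle is establishing exactness of $eP_\bullet e$ in the first step: this is the point at which the self-duality of the CY resolution must be converted into vanishing of certain $\mathrm{Tor}$ groups over $A$, using finite-dimensionality of $\stab A$ in an essential way. Once this projective bimodule resolution of $B$ is in hand, the remaining assertions unfold by systematic application of the adjunction $eA\tens_B(-)\dashv\Hom_B(eA,-)$ and the derived Morita-type equivalence it induces between suitable full subcategories of $\fgmod A$ and $\GP(B)$, with careful bookkeeping of how the idempotent $e$ intertwines the two sides.
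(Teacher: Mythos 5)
This theorem is not proved in the paper: it is cited verbatim from \cite[Thm.~4.1, Thm.~4.10]{presslandinternally} and then applied, so there is no ``paper's own proof'' to compare against. That said, your sketch contains a material error in the argument for part~(1). You claim that applying $e(-)e$ to a summand $A\idemp{u}\tens\idemp{v}A$ of $P_i$ yields $B\idemp{u}\tens\idemp{v}B$ when $u,v\in F_0$ and zero otherwise, and that consequently every $eP_ie$ is a projective $B$-bimodule. This is false: $e(A\idemp{u}\tens\idemp{v}A)e = eA\idemp{u}\tens\idemp{v}Ae$, which is nonzero whenever there exist paths from $u$ to a frozen vertex and from a frozen vertex to $v$, i.e.\ generically. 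Moreover $eA\idemp{u}$ is a projective left $B$-module only when $u\in F_0$; for mutable $u$ it is a (non-projective) summand of the cluster-tilting object $eA$. So the terms $eP_ie$ are not projective $B$-bimodules, and $eP_\bullet e$ does not exhibit $B$ as having a length-$3$ projective bimodule resolution.

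This is not a repairable slip: if your argument worked it would show $\gldim B\leq 3$, which is false for boundary algebras $B_D$ in almost all cases (e.g.\ for $(k,n)$-diagrams, where $B_D\iso C$ is the Jensen--King--Su algebra, of infinite global dimension). The Iwanaga--Gorenstein property is genuinely weaker and has to be extracted differently: the actual argument in \cite{presslandinternally} works with the one-sided modules $eA$ and $Ae$, using the internal CY self-duality of $\res{A}$ to control $\Ext^i_B(eA,B)$ and the injective (co)resolution of $B$, rather than trying to resolve $B$ over its enveloping algebra. Similar vagueness affects your sketch of (4): ``$\ker\psi$ annihilates $eA$ on the right, hence lies in $\Span{e}$'' does not follow, and even if it did it would not give $\ker\psi=0$; the injectivity of $A\to\Endalg{B}{eA}$ requires faithfulness of $eA$ as an $A$-module, which is established in \cite{presslandinternally} using the CY structure, not the reasoning you indicate.
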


We already saw in Theorem~\ref{t:bi3cy} that the dimer algebra $A_D$ of a connected Postnikov diagram $D$ is bimodule internally $3$-Calabi--Yau with respect to its boundary idempotent. Checking the other conditions needed to apply Theorem~\ref{t:icytofrobcat} to $A_D$ is comparatively straightforward, although we note that we once again rely heavily on Proposition~\ref{p:thin}.

\begin{prop}
\label{p:smallness}
Let $D$ be a connected Postnikov diagram with dimer algebra $A_D$, and let $e$ be the boundary idempotent of this algebra. Then $A_D$ is Noetherian and $A_D/\Span{e}$ is finite-dimensional.
\end{prop}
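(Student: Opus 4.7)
The plan is to reduce both claims to Proposition~\ref{p:thin}. By thinness, $A_D$ decomposes as
\[A_D = \bigdsum_{v,w \in (Q_D)_0} \idemp{w} A_D \idemp{v} = \bigdsum_{v,w \in (Q_D)_0} Z\cdot\minpath{v}{w},\]
with each summand free of rank $1$ over $Z = \powser{\KK}{t}$. Since $Z$ sits in the centre of $A_D$ and is itself a commutative Noetherian ring, $A_D$ is a finitely generated module over a central Noetherian subring. A standard argument (any one-sided ideal is a $Z$-submodule of the Noetherian $Z$-module $A_D$, hence finitely generated as such, hence finitely generated as a one-sided ideal) then gives that $A_D$ is Noetherian on both sides.

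For the second claim, I observe that the quotient $A_D/\Span{e}$ inherits from $A_D$ the structure of a finitely generated $Z$-module. It therefore suffices to produce $N \geq 0$ such that $t^N \in \Span{e}$: then $A_D/\Span{e}$ becomes a finitely generated module over $Z/(t^N) \iso \KK[t]/(t^N)$, which is finite-dimensional over $\KK$.

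To produce such an $N$, I fix any frozen vertex $w \in F_{D,0}$ (which exists because $D$ is connected with $n\geq 3$ boundary regions). For each mutable vertex $v$, strong connectedness of $Q_D$ (Proposition~\ref{p:strongly-connected}) together with thinness gives minimal paths $\minpath{v}{w}$ and $\minpath{w}{v}$, whose composition lies in $\idemp{v} A_D \idemp{v} = Z\idemp{v}$ and must therefore equal $t^{m_v}\idemp{v}$ for some integer $m_v$. Since any representing path has positive length, its total weight (in the sense of the proof of Proposition~\ref{p:thin}) is strictly positive, forcing $m_v \geq 1$. The composition is represented by a concrete path passing through the frozen vertex $w$, so $t^{m_v}\idemp{v} = \minpath{w}{v}\minpath{v}{w} \in \Span{e}$. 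Setting $N = \max_v m_v$ over the finitely many mutable vertices (and noting $t^N\idemp{v}=\idemp{v}t^N\in\Span{e}$ is trivial for frozen $v$), we obtain $t^N = \sum_{v\in (Q_D)_0} t^N\idemp{v} \in \Span{e}$, as required. The only non-routine input is the use of thinness to convert a single combinatorial fact (the existence of a path from $v$ to $w$ and back) into an algebraic identity involving $t$; there is no real obstacle beyond this.
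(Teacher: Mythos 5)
Your proof is correct and essentially matches the paper's: both parts rest on Proposition~\ref{p:thin}, the first to see that $A_D$ is finitely generated over the central Noetherian ring $Z=\powser{\KK}{t}$, and the second to combine thinness with strong connectedness (a path through a boundary vertex) to kill high powers of $t$ in $A_D/\Span{e}$. The only cosmetic difference is that you extract a single exponent $N$ with $t^N\in\Span{e}$ by composing minimal paths through a fixed frozen vertex, whereas the paper argues basis element by basis element using a cycle at $w$ through a boundary vertex; this is the same calculation in slightly different packaging, and your citation of $n\geq3$ for the existence of a frozen vertex is harmlessly stronger than needed.
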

\begin{proof}
Since $A_D$ is thin, as in Proposition~\ref{p:thin}, it is finitely generated as a module over the commutative Noetherian ring $\powser{\KK}{t}$, and so is Noetherian.

By thinness again, $A_D$ has a basis $\{t^n\minpath{v}{w}:v,w\in Q_0,n\in\NN\}$. To see that $A_D/\Span{e}$ is finite-dimensional, we show that all but finitely many of these basis vectors are zero in the quotient. Picking any $v,w\in Q_0$, let $c$ be any cycle at $w$ passing through a boundary vertex, which exists since $Q_D$ is strongly connected as in Proposition~\ref{p:strongly-connected}. Then since $A_D$ is thin, we have $c\minpath{v}{w}=t^N\minpath{v}{w}$ for some $N\in\NN$. By construction, this element is zero in the quotient $A_D/\Span{e}$, and thus $t^n\minpath{v}{w}=0$ in this quotient algebra for all $n\geq N$. Running over all pairs $v,w\in Q_0$ completes the proof.
\end{proof}

Recall that we write $\clustalg{D}$ for the cluster algebra with frozen variables determined by the ice quiver $(Q_D,F_D)$ of a Postnikov diagram $D$. Note that arrows between frozen vertices play no role in the construction of this cluster algebra. Since $Q_D$ may contain $2$-cycles, we are committing a small abuse of notation here; really $\clustalg{D}$ is the cluster algebra determined by the quiver $Q'$ of the unique reduced Postnikov diagram obtained from $D$ by untwisting moves, as in Figure~\ref{f:untwist}. Equivalently, $Q'$ is obtained from $Q_D$ by removing a maximal collection of $2$-cycles incident with unfrozen vertices. Using the description via untwisting and Proposition~\ref{p:untwisting}, we see that $Q'$ has a potential $W'$ such that $\frjac{Q'}{F_D}{W'}\cong A_D$ is the dimer algebra of the original diagram $D$.

By Theorem~\ref{t:bi3cy} and Proposition~\ref{p:smallness}, we may apply Theorem~\ref{t:icytofrobcat} to the dimer algebra $A_D$ of a connected Postnikov diagram, to obtain the following.

\begin{thm}
\label{t:categorification}
Let $D$ be a connected Postnikov diagram, with dimer algebra $A_D$. Let $e$ be the boundary idempotent of this algebra, and write $B_D=eA_De$. Then
\begin{enumerate}
\item  $B_D$ is Iwanaga--Gorenstein of Gorenstein dimension at most $3$,
\item $\GP(B_D)$ is a stably $2$-Calabi--Yau Frobenius category,
\item $T_D=eA_D\in\GP(B_D)$ is cluster-tilting, and
\item $\Endalg{B_D}{T_D}\iso A_D$.
\end{enumerate}
In particular, it follows from (4) that the Gabriel quiver of $\Endalg{B_D}{T_D}$ is, up to arrows between frozen vertices, the quiver of a seed in the cluster algebra $\clustalg{D}$.
%For any sequence $\mutseq$ of mutations, the mutated cluster-tilting object $\mutseq T_D$ has endomorphism algebra isomorphic to the frozen Jacobian algebra of the mutated ice quiver with potential $\mutseq(Q_D,F_D,W_D)$, and so its Gabriel quiver is, up to arrows between frozen vertices, the quiver of the seed $\mutseq(s_D)$.
\end{thm}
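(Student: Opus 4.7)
The plan is to derive parts (1)--(4) by direct invocation of Theorem~\ref{t:icytofrobcat}, applied to the algebra $A=A_D$ together with the boundary idempotent $e$. That theorem has three hypotheses: Noetherianity of $A$, finite-dimensionality of $A/\Span{e}$, and the bimodule internally $3$-Calabi--Yau property for the pair $(A,e)$. The first two are supplied by Proposition~\ref{p:smallness}, while the third is Theorem~\ref{t:bi3cy}. Once these are assembled, assertions (1), (2) and (4) translate line-by-line from the conclusions of Theorem~\ref{t:icytofrobcat}, and (3) likewise yields that $T_D=eA_D$ is a cluster-tilting object in the Frobenius category $\GP(B_D)$.

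For the concluding sentence, the task is to identify the Gabriel quiver of $\Endalg{B_D}{T_D}$, which by (4) coincides with the Gabriel quiver of $A_D$ itself. Since $A_D$ is presented as a frozen Jacobian algebra on the ice quiver $(Q_D,F_D)$, its Gabriel quiver is obtained by removing those arrows which, modulo the cyclic-derivative relations, lie in the square of the radical. The local analysis in the proof of Proposition~\ref{p:untwisting} shows that every unfrozen $2$-cycle in $Q_D$ produces precisely one such redundant arrow. Iteratively applying untwisting moves therefore transports us to the reduced diagram $D'$, with $A_{D'}\iso A_D$ by Proposition~\ref{p:untwisting} and with $Q_{D'}$ free of $2$-cycles by Proposition~\ref{p:reduced-quiv}. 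Hence $Q_{D'}$ is the Gabriel quiver of $A_D$, up to possible discrepancies among frozen-to-frozen arrows arising from boundary untwistings.

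To finish, by the conventions stated in the paragraph preceding the theorem, the cluster algebra $\clustalg{D}$ is defined via the ice quiver of any reduced Postnikov diagram equivalent to $D$, so $(Q_{D'},F_{D'})$ is by construction the initial seed of $\clustalg{D}$. I do not expect any step to present a genuine obstacle, since the two substantive inputs---the Calabi--Yau property of Theorem~\ref{t:bi3cy} and the finiteness statements of Proposition~\ref{p:smallness}---have been established in the preceding sections; the only care needed is in correctly tracking the distinction between $Q_D$ and its reduced form $Q_{D'}$ when $D$ is not itself reduced, which is precisely what the qualifier ``up to arrows between frozen vertices'' in the statement encodes.
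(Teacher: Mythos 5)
Your proposal is correct and matches the paper's approach exactly: the paper also deduces Theorem~\ref{t:categorification} by applying Theorem~\ref{t:icytofrobcat} to $(A_D,e)$, with the hypotheses supplied by Theorem~\ref{t:bi3cy} and Proposition~\ref{p:smallness}. Your extra elaboration on the concluding sentence---passing to the reduced diagram and invoking Propositions~\ref{p:untwisting} and~\ref{p:reduced-quiv} to identify the Gabriel quiver of $A_D$---is likewise consistent with the discussion preceding the theorem in the paper, though the qualifier ``up to arrows between frozen vertices'' is there simply because such arrows play no role in defining $\clustalg{D}$, rather than specifically because of boundary untwistings.
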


We note that if the full subquiver of $Q_D$ on the mutable vertices is, after removing $2$-cycles, mutation-equivalent to an acyclic quiver $\stab{Q}$, it follows from a result of Keller and Reiten \cite[Thm.~2.1]{kelleracyclic} that the stable category $\stabGP(B_D)$ of our categorification is equivalent to the cluster category $\clustcat{\stab{Q}}$ \cite{buancluster}. This mutation-acyclicity assumption does not hold for most Postnikov diagrams, however. We conjecture that $\stabGP(B_D)$ is always equivalent to Amiot's cluster category \cite{amiotcluster} for the quiver with potential obtained from $(Q_D,F_D,W_D)$ by deleting the frozen vertices and all incident arrows from $Q_D$ and removing terms passing through these vertices from $W_D$.

If $D$ is a $(k,n)$-diagram, as in Definition~\ref{d:postnikov-diag}, it follows from \cite[Cor.~10.4]{baurdimer} that the boundary algebra $B_D=eA_De$ does not depend on the choice of $D$ within this class, up to isomorphism, and the category $\GP(B_D)$ is equivalent to Jensen--King--Su's categorification of the cluster algebra structure on the whole Grassmannian \cite{jensencategorification}.

\begin{figure}[h]
\begin{tikzpicture}
\draw[dashed] plot[smooth]
coordinates {(150:2.3) (135:1) (90:0.6) (45:1) (30:2.3)}
[postaction=decorate, decoration={markings,
 mark= at position 0.1 with \strarrow,
 mark= at position 0.5 with \strarrow,
 mark= at position 0.93 with \strarrow}];
 
\draw[dashed] plot[smooth]
coordinates {(120:2.3) (135:1) (180:0.6) (225:1) (240:2.3)}
[postaction=decorate, decoration={markings,
 mark= at position 0.1 with \strarrow,
 mark= at position 0.53 with \strarrow,
 mark= at position 0.93 with \strarrow}];
 
\draw[dashed] plot[smooth]
coordinates {(330:2.3) (315:1) (270:0.6) (225:1) (210:2.3)}
[postaction=decorate, decoration={markings,
 mark= at position 0.1 with \strarrow,
 mark= at position 0.5 with \strarrow,
 mark= at position 0.93 with \strarrow}];
 
\draw[dashed] plot[smooth]
 coordinates {(300:2.3) (315:1) (0:0.6) (45:1) (60:2.3)}
[postaction=decorate, decoration={markings,
 mark= at position 0.1 with \strarrow,
 mark= at position 0.53 with \strarrow,
 mark= at position 0.93 with \strarrow}];

\node at (0:0) (R0) {$v_0$};
\foreach \n in {1,2,3,4}
{\node at (-45+90*\n:2.3) (R\n) {$v_\n$};}
\draw[quivarrow]
(R0) -- (R1);
\draw[quivarrow]
(R2) -- (R0);
\draw[quivarrow]
(R0) -- (R3);
\draw[quivarrow]
(R4) -- (R0);
\end{tikzpicture}
\hspace{5em}
\begin{tikzpicture}
\draw[dashed] plot[smooth]
coordinates {(150:2.3) (180:1.3) (270:0.6) (0:1.3) (30:2.3)}
[postaction=decorate, decoration={markings,
 mark= at position 0.1 with \strarrow,
 mark= at position 0.33 with \strarrow,
 mark= at position 0.51 with \strarrow,
 mark= at position 0.7 with \strarrow,
 mark= at position 0.9 with \strarrow}];
 
\draw[dashed] plot[smooth]
coordinates {(120:2.3) (90:1.3) (0:0.6) (270:1.3) (240:2.3)}
[postaction=decorate, decoration={markings,
 mark= at position 0.1 with \strarrow,
 mark= at position 0.33 with \strarrow,
 mark= at position 0.51 with \strarrow,
 mark= at position 0.7 with \strarrow,
 mark= at position 0.9 with \strarrow}];
 
\draw[dashed] plot[smooth]
coordinates {(330:2.3) (0:1.3) (90:0.6) (180:1.3) (210:2.3)}
[postaction=decorate, decoration={markings,
 mark= at position 0.1 with \strarrow,
 mark= at position 0.33 with \strarrow,
 mark= at position 0.51 with \strarrow,
 mark= at position 0.7 with \strarrow,
 mark= at position 0.9 with \strarrow}];
 
\draw[dashed] plot[smooth]
 coordinates {(300:2.3) (270:1.3) (180:0.6) (90:1.3) (60:2.3)}
[postaction=decorate, decoration={markings,
 mark= at position 0.1 with \strarrow,
 mark= at position 0.33 with \strarrow,
 mark= at position 0.51 with \strarrow,
 mark= at position 0.7 with \strarrow,
 mark= at position 0.9 with \strarrow}];
 
\node at (0:0) (R0) {$v_0$};
\foreach \n in {1,2,3,4}
{\node at (-45+90*\n:1.85) (R\n) {$v_\n$};}
\draw[quivarrow]
(R1) -- (R0);
\draw[quivarrow]
(R0) -- (R2);
\draw[quivarrow]
(R3) -- (R0);
\draw[quivarrow]
(R0) -- (R4);
\draw[quivarrow]
(R2) -- (R1);
\draw[quivarrow]
(R4) -- (R1);
\draw[quivarrow]
(R4) -- (R3);
\draw[quivarrow]
(R2) -- (R3);
\end{tikzpicture}
\caption{A \emph{geometric exchange} of a Postnikov diagrams transforms the local configuration of a quadrilateral alternating region, shown on the left, to that shown on the right. The effect on the quiver with potential is a mutation \cite[\S5]{derksenquivers1} at the vertex corresponding to the quadrilateral alternating region (cf.\ \cite{vitoriamutations}).}
\label{f:geom-exch}
\end{figure}
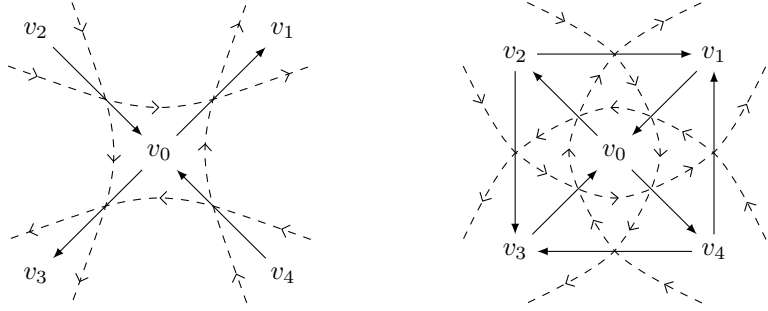

In general, we can consider two Postnikov diagrams $D$ and $D'$ related by geometric exchange as shown in Figure~\ref{f:geom-exch}. Combining \cite[Thm.~13.4]{postnikovtotal} and \cite[Thm.~17.1]{postnikovtotal}, we see that these diagrams determine the same positroid, and hence the same positroid varieties. Moreover, the quivers $Q_D$ and $Q_{D'}$ are related by a mutation, and so the cluster algebras $\clustalg{D}$ and $\clustalg{Q_{D'}}$ are isomorphic. Thus we would like both $D$ and $D'$ to give rise to the same category of Gorenstein projective modules in Theorem~\ref{t:categorification}. This is indeed the case, as follows.

\begin{prop}
\label{p:geom-exch}
Let $D$ and $D'$ be Postnikov diagrams related by geometric exchange. Then the boundary algebras $B_D$ and $B_{D'}$ are isomorphic, and hence the categories $\GP(B_D)$ and $\GP(B_{D'})$ are equivalent.
\end{prop}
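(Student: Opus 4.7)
The plan is to realise both boundary algebras as the endomorphism algebra of a common projective-injective object in $\GP(B_D)$, by relating the cluster-tilting objects defining $A_D$ and $A_{D'}$ through a single mutation. By Theorem~\ref{t:categorification}, $A_D\iso\Endalg{B_D}{T_D}$ for the cluster-tilting object $T_D=eA_D\in\GP(B_D)$, and the frozen idempotent $e\in A_D$ cuts out a projective-injective summand $P\subset T_D$ with $\Endalg{B_D}{P}=B_D$. The indecomposable summands of $T_D$ are indexed by the vertices of $Q_D$, with those at mutable vertices being non-projective.

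Let $v_0$ be the mutable vertex of $Q_D$ corresponding to the quadrilateral alternating region at which the geometric exchange takes place, and let $T_{v_0}$ be the associated non-projective summand of $T_D$. Since $\GP(B_D)$ is stably $2$-Calabi--Yau, Iyama--Yoshino mutation at $T_{v_0}$ yields a new cluster-tilting object $T_D^\mut$ differing from $T_D$ only in the $v_0$-summand, so in particular $P$ remains a summand of $T_D^\mut$. The main input I would use is the mutation-compatibility result from \cite{presslandmutation}, which asserts that in this internally Calabi--Yau setting Iyama--Yoshino mutation of a cluster-tilting object corresponds to Derksen--Weyman--Zelevinsky mutation of the ice quiver with potential presenting its endomorphism algebra. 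Combined with the combinatorial fact (indicated in the caption to Figure~\ref{f:geom-exch}) that geometric exchange of Postnikov diagrams implements precisely this IQP mutation at $v_0$, this identifies $\Endalg{B_D}{T_D^\mut}$ with $A_{D'}$.

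Granted $A_{D'}\iso\Endalg{B_D}{T_D^\mut}$, with the frozen idempotent $e$ still picking out the unchanged summand $P$, I would conclude
\[B_{D'}=eA_{D'}e\iso e\,\Endalg{B_D}{T_D^\mut}\,e=\Endalg{B_D}{P}=B_D,\]
from which $\GP(B_D)\simeq\GP(B_{D'})$ follows by Morita equivalence. The hard part is the mutation-compatibility invoked in the second paragraph: one must verify that the hypotheses of the relevant result from \cite{presslandmutation} are met in our internally $3$-Calabi--Yau setting, and that the image of the resulting IQP under the isomorphism $A_D\iso\Endalg{B_D}{T_D}$ really is the one combinatorially produced from $D'$. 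Once this is in hand the conclusion is essentially formal: the boundary algebra is the endomorphism algebra of the projective-injective part of any cluster-tilting object of this type, and is therefore manifestly invariant under mutations at mutable summands.
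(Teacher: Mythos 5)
Your argument is correct in outline, but it takes a genuinely different route from the paper's. The paper simply observes that the proof of Proposition~12.2 in Baur--King--Marsh \cite{baurdimer} is a direct combinatorial argument that never uses the hypothesis that $D$ is a $(k,n)$-diagram, and so carries over verbatim; no categorical machinery is invoked, and the whole proof is one sentence. Your approach, by contrast, is a categorical argument: realise both boundary algebras as $\Endalg{B_D}{P}$ where $P$ is the projective-injective part of a cluster-tilting object of $\GP(B_D)$, and then use the mutation compatibility of \cite[Thm.~5.14]{presslandmutation} together with the fact that geometric exchange implements IQP mutation (cf.\ the caption to Figure~\ref{f:geom-exch} and \cite{vitoriamutations}) to identify $A_{D'}$ with $\Endalg{B_D}{T_D^{\mathrm{m}}}$.

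Your route is more conceptual and fits cleanly into the framework the paper sets up, but it imports two burdens that the paper's citation deliberately avoids. First, you must verify the hypotheses of the mutation theorem from \cite{presslandmutation} before Section~\ref{s:no-loops} has set those checks up (they can be verified, but you should at least replace $D$ by a reduced diagram using Proposition~\ref{p:untwisting} to guarantee $Q_D$ has no $2$-cycles). Second, you must promote the informal statement ``geometric exchange is IQP mutation at $v_0$'' from a figure caption to an actual ingredient of a proof; this is standard dimer-model folklore but is not established in the paper, whereas the BKM argument sidesteps it entirely. You correctly flag this as ``the hard part,'' which is honest, but it also means your proof is not shorter than reproducing BKM's once those loose ends are tied. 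The payoff of your approach is a unified viewpoint: the boundary algebra is visibly an invariant of the Frobenius category together with the chosen mutation class of cluster-tilting objects, which is a useful conceptual takeaway even if the paper reaches the same conclusion more cheaply.

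One small precision worth adding: when you write $B_{D'}=eA_{D'}e\iso e\,\Endalg{B_D}{T_D^{\mathrm{m}}}\,e$, you should say explicitly that the isomorphism of \cite[Thm.~5.14]{presslandmutation} carries the frozen idempotent $e'$ of $A_{D'}$ to projection onto the projective summands of $T_D^{\mathrm{m}}$, which are unchanged by the mutation since $v_0$ is mutable. This is what makes the final equality $\Endalg{B_D}{P}=B_D$ legitimate.
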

\begin{proof}
This can be proved exactly as in \cite[Prop.~12.2]{baurdimer}; while this proposition is stated only for $(k,n)$-diagrams, the proof does not use the extra assumption on the strand permutation.
\end{proof}

\begin{cor}
\label{c:dec-perm}
Given a Postnikov diagram $D$, the algebra $B_D$ is determined up to isomorphism by the positroid $\posit_D$.
\end{cor}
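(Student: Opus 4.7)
The plan is to deduce this corollary from Proposition~\ref{p:untwisting} and Proposition~\ref{p:geom-exch} together with a known combinatorial fact: any two (connected) Postnikov diagrams with the same positroid $\posit$ are related by a finite sequence of isotopies, twisting/untwisting moves, and geometric exchanges. Given this, I would argue that each such move leaves the isomorphism class of $B_D$ unchanged: isotopy because the dimer algebra is constructed purely from the combinatorics of the regions and their incidences; untwisting moves by Proposition~\ref{p:untwisting}, which directly gives $A_D \cong A_{D'}$ (and hence $B_D \cong B_{D'}$ after restricting to the boundary idempotent, noting that untwisting preserves the set of boundary regions); and geometric exchange by Proposition~\ref{p:geom-exch}. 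Composing the isomorphisms along a sequence of moves connecting $D$ to $D'$ then yields $B_D \cong B_{D'}$.

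The key combinatorial input I would invoke is the already-cited result of Oh--Postnikov--Speyer \cite[Thm.~1.5]{ohweak}: reduced Postnikov diagrams with positroid $\posit$ are in bijection with maximal weakly separated collections $\clust$ satisfying $\neck \subset \clust \subset \posit$, and (by the results on mutation of such collections in the same paper) any two such collections are linked by a sequence of square moves, which on the diagram side correspond exactly to geometric exchanges as in Figure~\ref{f:geom-exch}. Starting from arbitrary $D$ and $D'$ with $\posit_D = \posit_{D'}$, I would first apply untwisting moves (valid by Proposition~\ref{p:untwisting}) to reduce to the case that both diagrams are reduced; then connect them through a sequence of geometric exchanges between maximal weakly separated collections (valid by Proposition~\ref{p:geom-exch}). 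Combined with isotopy invariance this chains together to the desired isomorphism.

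The main obstacle is really only bookkeeping: one needs to be sure that the cited mutation-connectedness result applies without extra hypotheses (such as assuming the uniform positroid) and that, at each step, the isomorphisms of Propositions~\ref{p:untwisting} and \ref{p:geom-exch} respect the boundary idempotent $e$ so that they descend to isomorphisms of $B_D = eA_De$. Both points are essentially immediate: the boundary regions, and hence the vertices of $F_D$, are preserved under untwisting (which removes only digons interior to the strand configuration, or merges a boundary digon without changing the boundary region count) and under geometric exchange (which is a purely interior move). Once these checks are in place, the corollary follows by induction on the length of a sequence of moves relating $D$ and $D'$.
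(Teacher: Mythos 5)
Your proposal is correct and follows essentially the same route as the paper. The paper's proof is a one-liner: it cites Postnikov's Theorems~13.4 and 17.1 to conclude that any two diagrams with the same positroid are connected by a sequence of geometric exchanges, then invokes Proposition~\ref{p:geom-exch}. You instead invoke Oh--Postnikov--Speyer~\cite[Thm.~1.5]{ohweak} (together with mutation-connectedness of maximal weakly separated collections) and handle the reduction to reduced diagrams explicitly via Proposition~\ref{p:untwisting}; this is a slightly more careful bookkeeping of the same underlying combinatorial fact, but not a genuinely different argument.
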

\begin{proof}
Again combining \cite[Thm.~13.4]{postnikovtotal} and \cite[Thm.~17.1]{postnikovtotal}, any two diagrams determining the same positroid are connected by a sequence of geometric exchanges, so the result follows from Proposition~\ref{p:geom-exch}.
\end{proof}

One consequence of the general results in Theorem~\ref{t:icytofrobcat} on bimodule internally $3$-Calabi--Yau algebras is that the Gorenstein dimension of $B_D$ is at most $3$ for any connected Postnikov diagram $D$. In fact, this Gorenstein dimension never exceeds $2$ by \cite[Cor.~10.5]{canakciperfect}. When $D$ is a $(k,n)$-diagram, $B_D$ even has Gorenstein dimension $1$, as in the proof of \cite[Cor.~3.7]{jensencategorification}. Since $1$ is also the Krull dimension of $Z=\powser{\KK}{t}$, in this case the category $\GP(B_D)$ coincides with the category of Cohen--Macaulay $B_D$-modules, meaning those $B_D$-modules free and finitely generated over $Z$. However, the Gorenstein dimension of $B_D$ can be equal to $2$, and the inclusion $\GP(B_D)\to\CM(B_D)$ can be proper, as the following example shows.

\begin{eg}
\label{eg:CM-not-GP}
Consider the Postnikov diagram $D$ shown in Figure~\ref{f:Gdim2} and its dimer algebra $A_D$, the quiver of which is shown in the same figure. Since every alternating region of $D$ is on the boundary (equivalently, every vertex of the quiver is frozen), we have $B_D=A_D$, which has finite global dimension by Theorem~\ref{t:bi3cy}. In fact, since the left-most term of $\res{A_D}$ is zero in this case, again because all vertices are frozen, the global dimension of $B_D$ is $2$.

\begin{figure}[h]
\begin{minipage}[t]{0.4\textwidth}
\centering
\begin{tikzpicture}[scale=2,baseline=(bb.base)]

% === baseline ===
\path (0,0) node (bb) {};

% === boundary circle ===
\draw [boundary] (0,0) circle(1.0);

%% === strands ===
\draw [strand] plot[smooth]
coordinates {(45:1) (65:0.65) (0:0) (235:0.65) (225:1)}
[postaction=decorate, decoration={markings,
 mark= at position 0.3 with \strarrow,
  mark= at position 0.75 with \strarrow}];
  
\draw [strand] plot[smooth]
coordinates {(135:1) (115:0.65) (0:0) (305:0.65) (315:1)}
[postaction=decorate, decoration={markings,
 mark= at position 0.3 with \strarrow,
  mark= at position 0.75 with \strarrow}];
  
\draw [strand] plot[smooth]
coordinates {(225:1) (180:0.5) (135:1)}
[postaction=decorate, decoration={markings,
  mark= at position 0.5 with \strarrow}];
  
\draw [strand] plot[smooth]
coordinates {(315:1) (0:0.5) (45:1)}
[postaction=decorate, decoration={markings,
  mark= at position 0.5 with \strarrow}];
\end{tikzpicture}
\end{minipage}
\begin{minipage}[t]{0.4\textwidth}
\centering
\begin{tikzpicture}[scale=2,baseline=(bb.base)]

% === baseline ===
\path (0,0) node (bb) {};

% === quiver vertices : boundary, internal ===
%\foreach \n/\a in {1/0, 2/0, 3/5, 4/10, 5/0, 6/-3, 7/0}
\foreach \n in {1, 2, 3, 4}
{ \draw (90*\n:1) node (v\n) {$\diamond$}; }

% === quiver arrows ===
\foreach \t/\h/\a in {4/1/-35, 2/1/35, 3/2/35, 3/4/-35}
{ \draw [frozarrow]  (v\t) edge [bend left=\a] (v\h); }
 
\draw [quivarrow]  (v1) edge (v3);
\end{tikzpicture}
\end{minipage}
\caption{A Postnikov diagram $D$ of type $(2,4)$ giving rise to a boundary algebra $B_D$ for which not all Cohen--Macaulay modules are Gorenstein projective. The right-hand figure shows the quiver $Q_D$, with frozen arrows marked in bold as usual, which is the Gabriel quiver of $A_D=B_D$.}
\label{f:Gdim2}
\end{figure}
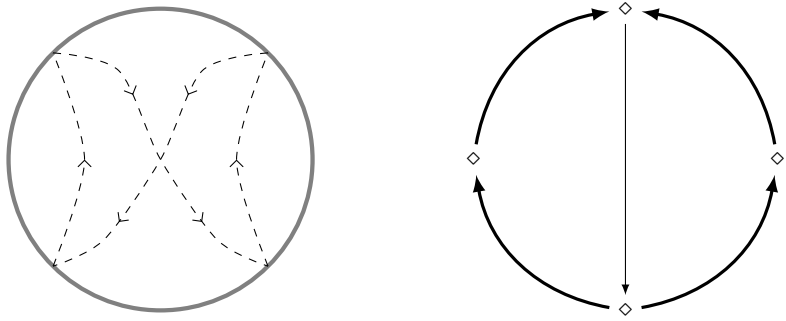

As is the case for any algebra of finite global dimension, $B_D$ is Iwanaga--Gorenstein (of Gorenstein dimension equal to the global dimension, so in this case $2$), and a $B_D$-module is Gorenstein projective if and only if it is projective. However, one can observe that the radical of the projective $B_D$-module at the lowest vertex (the head of the unfrozen arrow) is not projective, but it is free and finitely-generated over $Z$, i.e.\ Cohen--Macaulay, since it is a submodule of a projective $B_D$-module, which is Cohen--Macaulay by Proposition~\ref{p:thin}. Hence in this case the category of Gorenstein projective modules for $B_D$ is a proper subcategory of the category of Cohen--Macaulay modules.
\end{eg}

%It would be desirable to connect the categories $\GP(B_D)$, which for the moment serve as categorifications of the abstract cluster algebras $Q_D$ associated to the quivers of Postnikov diagrams, more closely to the geometry of positroid varieties. We will return to this theme in future work, in particular in \cite{canakciperfect}, where we will relate the category $\GP(B_D)$, for $D$ a Postnikov diagram of type $(k,n)$, to Jensen--King--Su's categorification of the Grassmannian $\Gra{k}{n}$ \cite{jensencategorification}.

\section{\texorpdfstring{Loops and $2$-cycles}{Loops and 2-cycles}}
\label{s:no-loops}

Let $B=B_D$ be the boundary algebra of a connected Postnikov diagram $D$, and let $T_D=eA_D\in\GP(B)$ be the cluster-tilting object from Theorem~\ref{t:categorification}. In this section we show that the quiver of $\Endalg{B}{T}$ has no loops or $2$-cycles for any cluster-tilting object $T\in\GP(B)$ mutation equivalent to $T_D$---so far we know this only for the relatively small number of such objects arising from the Postnikov diagrams related to $D$ by sequences of geometric exchanges. This will allow us to complete the proof of Theorem~\ref{t:mainthm} in the next section, by giving us access to results of Fu and Keller \cite{fucluster}. The statement requires our non-degeneracy assumption that $D$ has at least $3$ strands (cf.~\cite[Prop.~5.17]{presslandmutation}).

For the remainder of the section, we fix a cluster-tilting object $T_0\in\GP(B)$, and write $A_0=\Endalg{B}{T_0}$. We assume that
\begin{enumerate}
\item $A_0$ is isomorphic to the frozen Jacobian algebra of an ice quiver with potential $(Q,F,W)$, in such a way that the vertices of $F$ correspond to projection onto indecomposable projective summands of $T_0$,
\item the quiver $Q$ has no loops or $2$-cycles, and
\item $A_0/\Span{\idemp{i}}$ is finite-dimensional whenever $i\in F_0$.
\end{enumerate}
All of these assumptions hold if $T_0=eA_D$ is the initial cluster-tilting object from Theorem~\ref{t:categorification}---the absence of loops and $2$-cycles in $Q_D$ (after replacing $D$ by a reduced diagram if necessary) follows from Proposition~\ref{p:reduced-quiv}, and finite-dimensionality in (3) follows from Proposition~\ref{p:thin} via an exactly analogous argument to that in the proof of Proposition~\ref{p:smallness}. Our goal is to show that these properties are preserved by mutations, and so hold in the entire mutation class of those cluster-tilting objects coming from Postnikov diagrams.

With this in mind, we additionally fix for the rest of the section a cluster-tilting object $T$ obtained from $T_0$ by mutation at a non-projective indecomposable summand, and write $A=\Endalg{B}{T}$. By \cite[Thm.~5.14]{presslandmutation}, the algebra $A$ again satisfies (1), with the required ice quiver with potential being the mutation $\mu_k(Q,F,W)=(Q',F',W')$ at an appropriate vertex $k$. Thus $Q'$ has no loops, and no $2$-cycles incident with $k$, but a priori it may have other $2$-cycles. The algebra $A$ satisfies (3), as the following lemma shows.

\begin{lem}
\label{l:finite-dim}
Let $A_0$ be the frozen Jacobian algebra of an ice quiver with potential $(Q,F,W)$, and assume $i$ is a frozen vertex such that $A_0/\Span{\idemp{i}}$ is finite-dimensional. If $A$ is the frozen Jacobian algebra of $\mu_k(Q,F,W)$ for some mutable vertex $k$, then $A/\Span{\idemp{i}}$ is again finite-dimensional.
\end{lem}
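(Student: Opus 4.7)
The plan is to use the categorical setup of this section, viewing $A_0=\Endalg{B}{T_0}$ and $A=\Endalg{B}{T}$ for cluster-tilting objects $T_0=U\oplus T_k$ and $T=U\oplus T_k^*$ in $\GP(B)$. Since $i$ is frozen and $k$ is mutable we have $i\ne k$, so $T_i$ is a projective-injective summand common to $T_0$ and $T$. Under the categorical interpretation, $\Span{\idemp{i}}$ is the ideal of morphisms factoring through $\add T_i$; writing $I(X,Y)\subseteq\Hom_B(X,Y)$ for this subspace, the hypothesis becomes that $\Hom_B(X,Y)/I(X,Y)$ is finite-dimensional for all $X,Y\in\add T_0$, and the goal is to prove the analogous statement for $X,Y\in\add T$.

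The key tool will be the pair of exchange sequences in $\GP(B)$,
\[0\to T_k^*\to M\to T_k\to 0,\qquad 0\to T_k\to M'\to T_k^*\to 0,\]
with $M,M'\in\add U$, arising from cluster-tilting mutation. For a summand $T_l$ of $T_0$ with $l\ne k$, applying $\Hom_B(T_l,-)$ to the second sequence and using $\Ext^1_B(T_l,T_k)=0$ (since $T_0$ is cluster-tilting) produces a surjection $\Hom_B(T_l,M')\twoheadrightarrow\Hom_B(T_l,T_k^*)$, which descends to a surjection on quotients by $I$. The source is finite-dimensional by hypothesis, so $\Hom_B(T_l,T_k^*)/I$ is too. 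The symmetric case $\Hom_B(T_k^*,T_l)/I$ is handled by applying $\Hom_B(-,T_l)$ to the first exchange sequence, using $\Ext^1_B(T_k,T_l)=0$.

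The main obstacle is the remaining case $\Hom_B(T_k^*,T_k^*)/I$, since neither exchange sequence produces a direct surjection onto this space. Here I plan to apply $\Hom_B(-,T_k^*)$ to the first exchange sequence, yielding the exact segment
\[\Hom_B(M,T_k^*)\to\Hom_B(T_k^*,T_k^*)\to\Ext^1_B(T_k,T_k^*)\to\Ext^1_B(M,T_k^*)=0,\]
where the final vanishing uses that $M,T_k^*\in\add T$. By the standard one-dimensionality of $\Ext^1_B(T_k,T_k^*)$ for cluster-tilting mutation in a $2$-Calabi--Yau stable category, the image of $\Hom_B(M,T_k^*)$ in $\Hom_B(T_k^*,T_k^*)$ has codimension $1$, giving
\[\dim\Hom_B(T_k^*,T_k^*)/I\le\dim\Hom_B(M,T_k^*)/I+1,\]
and the right-hand side is finite by the previous step applied to the indecomposable summands of $M$. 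Combining the four cases yields finite-dimensionality of $A/\Span{\idemp{i}}$.
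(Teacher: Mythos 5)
Your proof takes a genuinely different route from the paper's. The paper's argument is purely algebraic and matches the generality of the lemma as stated: it observes that $A_0/\Span{\idemp{i}}$ and $A/\Span{\idemp{i}}$ are themselves frozen Jacobian algebras of ice quivers with potential related by mutation at $k$ (obtained by deleting the vertex $i$ with all incident arrows, and deleting all terms of the potential passing through $i$), and then invokes an adaptation of Derksen--Weyman--Zelevinsky's result that mutation preserves finite-dimensionality of Jacobian algebras. You instead rely on the categorical realisation $A_0 = \Endalg{B}{T_0}$, $A = \Endalg{B}{T}$, and bound the new Hom-spaces via the exchange sequences in $\GP(B)$. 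Both are valid arguments. The trade-off is that the paper's proof works for an arbitrary frozen Jacobian algebra (the generality in which the lemma is stated), whereas yours is tied to the categorical setup of Section~\ref{s:no-loops} — always available where the lemma is applied, so this is not a problem for the paper, but worth noting that your proof establishes only the contextual version. On the other hand, the paper defers to an unspecified ``straightforward adaptation'' of a DWZ proof, while yours is self-contained given standard $2$-Calabi--Yau cluster-tilting theory, and arguably gives a cleaner conceptual picture of why the result is true.

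One small point in your case $\Hom_B(T_k^*,T_k^*)/I$: you invoke the one-dimensionality of $\Ext^1_B(T_k,T_k^*)$. This is indeed available (it follows from the absence of loops in $Q(T_0)=Q$, which is assumption~(2) of the section), but it is a heavier tool than necessary and invites the suspicion of circularity with the properties Section~\ref{s:no-loops} is ultimately establishing. All you need is that the cokernel of $\Hom_B(M,T_k^*)\to\Hom_B(T_k^*,T_k^*)$, which is isomorphic to $\Ext^1_B(T_k,T_k^*)$, is \emph{finite-dimensional}. This is automatic: $\Ext^1_B(T_k,T_k^*)\cong\Hom_{\stabGP(B)}(T_k,\Sigma T_k^*)$, and $\stabGP(B)$ is Hom-finite by Theorem~\ref{t:categorification}(2). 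Replacing ``codimension $1$'' with ``finite codimension'' makes the argument both simpler and manifestly free of any hidden dependence on the no-loops property for $Q'$.
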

\begin{proof}
Note that the quivers $Q$ and $\mu_kQ$ have the same set of frozen vertices, so the statement makes sense. It is straightforward to check $A_0/\Span{\idemp{i}}$ and $A/\Span{\idemp{i}}$ are, like $A_0$ and $A$, frozen Jacobian algebras of ice quivers with potential related by mutation at $k$. (The required ice quivers with potential are obtained from those of $A_0$ and $A$ by removing the vertex $i$ and all incident arrows from the quiver, and removing from the potential all terms given by cycles through $i$.) That mutations preserve finite dimensionality of frozen Jacobian algebras follows via a straightforward adaptation of the proof of \cite[Cor.~6.6]{derksenquivers1} for ordinary Jacobian algebras.
\end{proof}

Thus it remains to show that the quiver $Q'$ has no $2$-cycles. We start by showing that $A/\Span{\idemp{i}}$ has finite global dimension for any frozen vertex $i$.

\begin{prop}
\label{p:subalg-is-Z}
Let $i$ be a frozen vertex. Then $\idemp{i}A\idemp{i}\iso Z$.
\end{prop}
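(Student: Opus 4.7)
The plan is to combine two ingredients: the standard fact that Iyama--Yoshino mutation at a non-projective indecomposable summand of a cluster-tilting object leaves the projective summands untouched, and a direct computation of the endomorphism ring of an indecomposable projective $B$-module using thinness of $A_D$.

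First, since $T$ is obtained from $T_0$ by mutating at a non-projective indecomposable summand, every indecomposable projective summand of $T_0$ also appears as a summand of $T$, and the set of frozen vertices is preserved by the mutation. By assumption~(1), for $i \in F_0$ the idempotent $\idemp{i} \in A_0$ projects onto an indecomposable projective $B$-module $P_i$, and the corresponding idempotent $\idemp{i} \in A$ projects onto the same $P_i$. Thus
\[\idemp{i}A\idemp{i} \iso \End_B(P_i)^{\op}.\]

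Second, $P_i$ is an indecomposable projective $B$-module, so up to isomorphism $P_i \cong B\idemp{j}$ for some frozen vertex $j$ of $F_D$. Using that $\idemp{j}e = \idemp{j}$, we identify
\[\End_B(B\idemp{j})^{\op} \iso (\idemp{j}B\idemp{j})^{\op} = (\idemp{j}A_D\idemp{j})^{\op}.\]
By the thinness of $A_D$ (Proposition~\ref{p:thin}), the subalgebra $\idemp{j}A_D\idemp{j}$ is a free $Z$-module of rank one, generated by a minimal path $\minpath{j}{j}$ from $j$ to itself. The length-zero path $\idemp{j}$ has total weight zero, the minimum possible value, and so represents $\minpath{j}{j}$; hence $\idemp{j}A_D\idemp{j} = Z\idemp{j}$, and the map $z \mapsto z\idemp{j}$ is a $\KK$-algebra isomorphism $Z \isoto \idemp{j}A_D\idemp{j}$, sending the generator $t$ to the fundamental cycle at $j$. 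Since $Z$ is commutative the opposite ring is $Z$ itself, and we conclude $\idemp{i}A\idemp{i} \iso Z$.

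The main conceptual point is the preservation of projective summands under Iyama--Yoshino mutation, allowing us to realise $P_i$ concretely as an indecomposable projective $B$-module; once this is in hand, the remainder is a direct application of thinness from Section~\ref{s:postnikov-diags}. Notably, no induction on the length of the mutation sequence is required, since $P_i$ is pulled back directly to a summand of $B$ and its endomorphism ring computed inside $A_D$.
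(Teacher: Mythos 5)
Your proof is correct and follows essentially the same route as the paper's: reduce $\idemp{i}A\idemp{i}$ to $\End_B(P_i)^{\op}$ for $P_i$ an indecomposable projective $B$-module, then compute this using thinness of $A_D$. The only minor difference is that the paper invokes the general fact that indecomposable projectives are summands of \emph{every} cluster-tilting object (rather than the mutation-specific observation that one-step mutation at a non-projective summand preserves projective summands), and leaves the final identification $\idemp{j}A_D\idemp{j}\iso Z$ implicit by citing Proposition~\ref{p:thin} and Theorem~\ref{t:categorification}, where you spell it out via the minimal-path argument; both are sound.
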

\begin{proof}
Since $\idemp{i}$ corresponds to projection onto an indecomposable projective summand $P_i$ of $T$, we have $\idemp{i}A\idemp{i}=\Endalg{B}{P_i}$. Since $P_i$ is an indecomposable summand of every cluster-tilting object of $\GP(B)$, including $T_D$, this endomorphism algebra is isomorphic to $Z$ by Proposition~\ref{p:thin} and Theorem~\ref{t:categorification}.
\end{proof}

In the setting of Proposition~\ref{p:subalg-is-Z}, we thus have a recollement
\begin{equation}
\label{eq:recollement}
\begin{tikzcd}[column sep=40pt]
\fgmod{A/(\idemp{i})}\arrow["\iota" description]{r}&\fgmod{A}\arrow["\idemp{i}" description]{r}\arrow[shift right=3]{l}\arrow[shift left=3]{l}&\fgmod{Z}\arrow[shift right=3,swap]{l}{\ell}\arrow[shift left=3]{l}
\end{tikzcd}
\end{equation}
of abelian categories, in which $\iota$ is the natural inclusion, $\idemp{i}\colon X\mapsto \idemp{i}X$ and $\ell=A\idemp{i}\tens_Z-$ is left adjoint to $\idemp{i}$.

\begin{lem}
\label{l:homological-embedding}
In the recollement \eqref{eq:recollement}, the functor $\iota$ is a homological embedding, meaning it induces isomorphisms on all extension groups. As a consequence, $\gldim{A/\Span{\idemp{i}}}\leq\gldim{A}\leq 3$.
\end{lem}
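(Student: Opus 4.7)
The plan is to establish the homological embedding first, and then formally deduce the global-dimension bound. For the bound $\gldim A\le 3$, we use that by the assumption~(1) on $A_0$ combined with the mutation theory developed in \cite{presslandmutation}, $A$ is again the frozen Jacobian algebra of an ice quiver with potential, obtained from that of $A_0$ by mutation. Since the bimodule internally $3$-Calabi--Yau property is preserved by mutation of ice quivers with potential, $A$ inherits it from $A_0$, and hence admits a bimodule projective resolution of length $3$ (namely $\res{A}$), giving $\gldim A\le 3$.

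For the homological embedding, it suffices (by the standard criterion that $A\to\bar A$ be a homological ring epimorphism) to show that $\mathrm{Tor}^A_n(\bar A,\bar A)=0$ for all $n\ge 1$. Writing $I=Ae_iA$ and using the short exact sequence $0\to I\to A\to\bar A\to 0$, the case $n=1$ reduces to showing $I/I^2=0$, which is immediate from $e_i^2=e_i$ via the calculation $aeb=(ae)(e)(b)\in AeAeA$. The cases $n\ge 2$ reduce to $I$ being projective as a left $A$-module. The plan for the latter is to exhibit the multiplication $Ae_i\otimes_Z e_iA\to I$ as an isomorphism. Freeness of $e_iA$ as a right $Z$-module follows because $P_i=e_iB$ is projective in $B_D$ with $\End_B(P_i)\iso Z$ and each summand of $T$ is Gorenstein projective over $B_D$ and hence free of finite rank over $Z$; so $e_iA=\Hom_B(T,P_i)$ is a finitely generated torsion-free, hence free, $Z$-module. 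Granted this, $Ae_i\otimes_Z e_iA$ is a direct sum of copies of $Ae_i$, so its image under the multiplication map would be projective, giving the desired conclusion for $I$.

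The main obstacle is the injectivity of the multiplication map. After decomposing $A$ into blocks under the idempotent $e_i$, this reduces to injectivity of the composition map $\Hom_B(P_i,T')\otimes_Z\Hom_B(T',P_i)\to\End_B(T')$, where $T=P_i\oplus T'$. I expect to handle this using the thinness of $B_D$ (Proposition~\ref{p:thin}) and the rank-$1$ structure of indecomposable summands of cluster-tilting objects in the mutation class of $T_0$. With this in place, $\iota$ is a homological embedding, inducing $\Ext^n_{\bar A}(X,Y)\cong\Ext^n_A(X,Y)$ for all $X,Y\in\fgmod\bar A$; vanishing of the right-hand side for $n>\gldim A$ then gives $\gldim\bar A\le\gldim A\le 3$.
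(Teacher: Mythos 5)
Your overall strategy is sound and genuinely different from the paper's, but it has a gap at exactly the point where the real work happens. The paper invokes Psaroudakis's criterion \cite[Prop.~4.15]{psaroudakishomological}, which reduces the homological embedding claim to showing that the image of the counit $Ae_i\otimes_Z e_iP\to P$ is projective for every projective $A$-module $P$; you instead use the Geigle--Lenzing Tor-vanishing criterion and reduce to showing $I=Ae_iA$ is projective as a left $A$-module, by exhibiting the multiplication $Ae_i\otimes_Z e_iA\to I$ as an isomorphism. These are close relatives (take $P=A$ in Psaroudakis and one gets your map), so the comparison is fair. The problem is that you explicitly do not prove injectivity of this multiplication map, writing only that you ``expect to handle this using the thinness of $B_D$'' --- but that injectivity is precisely the heart of the lemma, and it is exactly what the paper's argument resolves. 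The paper's trick is worth internalising: the kernel $K$ of the counit is a submodule of a module that is free and finitely generated over $Z=\powser{\KK}{t}$, hence $K$ itself is free over $Z$ (since $Z$ is a PID); on the other hand $e_iK=0$ because $e_iAe_i\cong Z$ forces the $e_i$-component of the counit to be an isomorphism, so $K$ is an $A/(e_i)$-module and therefore finite-dimensional; a nonzero $Z$-free module is infinite-dimensional, so $K=0$. Your plan to use thinness and rank-one structure is aimed in the right direction, but without this finiteness-versus-freeness contradiction (or some equivalent argument), the proof does not close.

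A second, smaller issue: for $\gldim A\le 3$ you assert that the bimodule internally $3$-Calabi--Yau property is preserved by mutation of ice quivers with potential, citing \cite{presslandmutation}. That reference establishes compatibility of IQP mutation with mutation of cluster-tilting objects and the identification of the mutated endomorphism algebra as a frozen Jacobian algebra, but it does not establish mutation-invariance of the bimodule internal CY property, so this step is not currently justified. The paper instead derives $\gldim A\le 3$ from \cite[Prop.~3.7]{presslandinternally}, using that $A=\End_B(T)^{\op}$ for a cluster-tilting $T$ in $\GP(B)$ and that $\GP(B)$ is the category of third syzygies because $B$ has Gorenstein dimension at most $3$; you should either adopt this route or supply a reference for the mutation-invariance claim.
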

\begin{proof}
We use a result of Psaroudakis \cite[Prop.~4.15]{psaroudakishomological}, implying that it is enough to check that the image, denoted by $\mathsf{F}(P)$ in \cite{psaroudakishomological}, of the counit
\[\varepsilon\colon\ell(\idemp{i}P)=A\idemp{i}\tens_Z\idemp{i}P\to P,\]
is projective for all projective $P\in\fgmod{A}$. We may write $P=\Hom_{B}(T,T')$ for $T'\in\add{T}\subset\GP(B)\subset\CM(B)$, so that $\idemp{i}P=\Hom_B(P_i,T')$ is a free $Z$-module. Thus $A\idemp{i}\tens_Z\idemp{i}P\iso(A\idemp{i})^n$, where $n$ is the rank of the $Z$-module $\idemp{i}P$, is projective. We claim that the counit $\varepsilon$ is injective, so that its image is isomorphic to the projective module $(A\idemp{i})^n$.

To see this, observe that $A\idemp{i}\tens_Z\idemp{i}P\iso(A\idemp{i})^n$ and $P$ are Cohen--Macaulay $A$-modules---that is, they are free and finitely generated over $Z$---and so this property is inherited by the kernel $K$ of $\varepsilon$. On the other hand, $\idemp{i}\varepsilon\colon \idemp{i}A\idemp{i}\tens_Z \idemp{i}P\to \idemp{i}P$ is an isomorphism by Proposition~\ref{p:subalg-is-Z}, and so $\idemp{i}K=0$. Thus $K$ is a finitely-generated $A/\Span{\idemp{i}}$-module, hence finite-dimensional by Lemma~\ref{l:finite-dim}. Since the only finite-dimensional Cohen--Macaulay $A$-module is $0$, we have the result.

Having established that $\iota$ is a homological embedding, it is then immediate that $\gldim{A/\Span{\idemp{i}}}\leq\gldim{A}$. That $\gldim{A}\leq3$ is a consequence of \cite[Prop.~3.7]{presslandinternally}, since $A$ is the endomorphism algebra of the ($2$-)cluster-tilting object $T\in\GP(B)$, and $\GP(B)\subseteq\fgmod(B)$ is the category of third syzygy modules since $B$ has Gorenstein dimension at most $3$ by Theorem~\ref{t:categorification}(1).
\end{proof}

\begin{prop}
\label{p:no-2-cycles-gen}
Let $J=\frjac{Q}{F}{W}$ be the frozen Jacobian algebra of a reduced ice quiver with potential $(Q,F,W)$. If $J$ is finite-dimensional and $\gldim{J}<\infty$, then $Q$ has no loops or $2$-cycles.
\end{prop}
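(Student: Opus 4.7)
The plan is to argue by contradiction, assuming that $Q$ has a $2$-cycle and deducing that some simple $J$-module has infinite projective dimension. Loops in $Q$ are forbidden by the standing convention on ice quivers in Definition~\ref{d:iqp}, so only $2$-cycles need be ruled out.

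Suppose $Q$ contains arrows $\alpha\colon i\to j$ and $\beta\colon j\to i$ with $i\neq j$, and assume without loss of generality that $\alpha$ is unfrozen. Since $(Q,F,W)$ is reduced, $W$ has no cyclic term of length $2$, and hence the cyclic derivative $\der{\rho}W$ lies in the square of the arrow ideal of $\cpa{\KK}{Q}$ for every $\rho\in Q_{1}^{\mut}$. This means the presentation of $J$ recorded by $(Q,F,W)$ is a minimal presentation, so that the complex $\res{J}\tens_{J}\simp{v}$ from Definition~\ref{frjacres} agrees, at each vertex $v$, with the first three terms of the minimal projective resolution of the simple left $J$-module $\simp{v}$.

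The key step is then to use the $2$-cycle to force this minimal resolution to be non-terminating. The mechanism is visible in the toy case where $Q$ is the $2$-cycle alone, $F$ is empty and $W$ is a non-zero scalar multiple of $(\alpha\beta)^{n}$ for some $n\geq 2$: a direct computation shows that the minimal projective resolution of $\simp{i}$ is periodic with $\Omega^{4}\simp{i}\iso\simp{i}$, so that $\simp{i}$ has infinite projective dimension. The same mechanism persists in general: the summand $J\idemp{j}$ appears in $P_{i}^{-1}$ indexed by $\alpha\in\Tail{i}$, and (provided $\beta$ is also unfrozen) reappears in $P_{i}^{-2}$ indexed by $\beta\in\mHead{i}$. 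A syzygy-tracking argument, based on the minimality of $\res{J}\tens_{J}\simp{v}$ guaranteed by reducedness, then exhibits non-zero classes in $\Ext_{J}^{m}(\simp{i},\simp{i}\dsum\simp{j})$ for arbitrarily large $m$, contradicting $\gldim J<\infty$. The case where $\beta$ is unfrozen but $\alpha$ is frozen is handled symmetrically, starting from $\simp{j}$ instead.

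The main obstacle lies in extending the toy-case periodicity to the general setting. In the presence of additional arrows of $Q$ or higher-order terms of $W$, one must verify that these further contributions cannot cancel the ``periodic core'' produced by the $2$-cycle in the higher syzygies. This is the technical heart of the argument, and would be handled by a careful bimodule-level analysis exploiting the fact, ensured by reducedness, that every differential of $\res{J}\tens_{J}\simp{v}$ has image in the radical of its target projective.
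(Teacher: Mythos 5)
The gap in your proposal is in the 2-cycle step, and it is one you yourself acknowledge: you write that ``the main obstacle lies in extending the toy-case periodicity to the general setting'' and that ``a careful bimodule-level analysis'' would be needed to prevent cancellation of the periodic core --- but you do not carry out that analysis. As stated, this is a plan for a proof, not a proof; the claim that a $2$-cycle forces the syzygies to exhibit $\Ext^m_J(\simp{i},-)\neq 0$ for arbitrarily large $m$ is exactly what needs to be established, and tracking minimality of the syzygy sequence past degree $2$ for an arbitrary frozen Jacobian algebra is genuinely delicate. The periodicity argument you sketch works for the toy example but there is no general structural reason why the ``periodic core'' must survive in higher syzygies when $W$ has higher-order terms and $Q$ has other arrows.

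The paper sidesteps this entirely. Its proof does not attempt to show any simple has infinite projective dimension directly. Instead it uses the hypothesis $\gldim J<\infty$ together with finite-dimensionality only once, through the no-loops theorem of Lenzing and Igusa, to conclude $Q$ is loop-free. Then it observes that the presentation of $J$ as a frozen Jacobian algebra of a \emph{reduced} ice quiver with potential yields an exact sequence
\[\bigdsum_{b\in\mHead{i}}J\idemp{\tail{b}}\to\bigdsum_{a\in\Tail{i}}J\idemp{\head{a}}\to J\idemp{i}\to \simp{i}\to 0\]
starting a projective resolution of $\simp{i}$. Since $Q$ has no loops, $\tail{b}\neq i$ for all $b\in\mHead{i}$, so $\Hom_J\bigl(\bigdsum_{b\in\mHead{i}}J\idemp{\tail{b}},\simp{i}\bigr)=0$, and hence $\Ext^2_J(\simp{i},\simp{i})=0$, being a subquotient. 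The absence of $2$-cycles is then read off from the cited result \cite[Prop.~3.11]{geissrigid}. This computation needs only degrees $0$ through $2$ of the resolution and requires no control over higher syzygies at all. It is both shorter and more robust than the approach you propose, and to make your route rigorous you would essentially have to rediscover these facts anyway. (Your observation that loops are excluded by the convention in Definition~\ref{d:iqp} is correct, though the paper prefers to deduce it from the no-loops theorem so that the argument does not rely on the convention having been verified for the mutated quiver.)
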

\begin{proof}
Since $(Q,F,W)$ is reduced, $Q$ is the Gabriel quiver of $J$ \cite[Rem.~3.4]{presslandmutation}. That $Q$ has no loops then follows from the no-loops theorem \cite{lenzingnilpotente,igusanoloops}, without using that $J$ is a frozen Jacobian algebra. However, this latter assumption means that for each $i\in Q_0$ there is an exact sequence
\[\bigdsum_{b\in\mHead{i}}J\idemp{\tail{b}}\to\bigdsum_{a\in\Tail{i}}J\idemp{\head{a}}\to J\idemp{i}\to S_i\to 0,\]
for $S_i=J\idemp{i}/\rad{J}\idemp{i}$ the simple $J$-module supported at vertex $i$, and this sequence is the start of a projective resolution of $S_i$ \cite[Prop.~3.3(b)]{buanmutation}. Since $Q$ has no loops, $\tail{b}\ne i$ for all $b\in\mHead{i}$, and so
\[\Hom_J\Bigg(\bigdsum_{b\in\mHead{i}}J\idemp{\tail{b}},S_i\Bigg)=0.\]
It follows that $\Ext^2_J(S_i,S_i)$, which is a subquotient of this space, is also zero. That $Q$ has no $2$-cycles then follows from \cite[Prop.~3.11]{geissrigid}.
\end{proof}

We now return to $A=\End_B(T)^\op$, for $T$ obtained from $T_0$ by a mutation at a non-projective indecomposable summand. Recall that $A\iso\frjac{Q'}{F'}{W'}$, where $(Q',F',W')$ is obtained by mutation from $(Q,F,W)$ and is thus reduced by definition.

\begin{prop}
\label{p:no-2-cycles}
The quiver $Q'$ has no $2$-cycles.
\end{prop}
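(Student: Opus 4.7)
The plan is to derive a contradiction from the existence of a 2-cycle in $Q'$ by reducing to Proposition~\ref{p:no-2-cycles-gen}, applied to an appropriate finite-dimensional quotient of $A$. General mutation theory of ice quivers with potential already rules out loops in $Q'$ as well as 2-cycles involving the mutated vertex $k$ (cf.~\cite[Prop.~5.17]{presslandmutation}), so suppose for contradiction that $Q'$ contains a 2-cycle between vertices $j, \ell \ne k$.

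Since $D$ has $n \geq 3$ strands and is connected, $F'_0 = F_0$ contains at least three frozen vertices, so we may choose a frozen vertex $i \notin \{j, \ell\}$. The quotient $J := A/\Span{\idemp{i}}$ is then finite-dimensional by Lemma~\ref{l:finite-dim} and of finite global dimension at most $3$ by Lemma~\ref{l:homological-embedding}. As noted in the proof of Lemma~\ref{l:finite-dim}, $J$ is itself a frozen Jacobian algebra $\frjac{Q''}{F''}{W''}$, where $(Q'', F'', W'')$ is obtained from $(Q', F', W')$ by deleting the vertex $i$ together with all arrows incident with it, and by removing from $W'$ all terms that contain such an arrow.

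The ice quiver with potential $(Q', F', W')$ is reduced by definition, being the output of a mutation, so $W'$ has no terms of length $2$ modulo cyclic equivalence; $W''$ inherits this property, being obtained from $W'$ by discarding terms. Proposition~\ref{p:no-2-cycles-gen} therefore applies to $J$ and yields that $Q''$ has no 2-cycles, contradicting the fact that the hypothetical 2-cycle between $j$ and $\ell$ survives intact in $Q''$, since neither endpoint is $i$ and neither of its arrows is incident with $i$. The main subtlety in executing this plan is the verification that $(Q'', F'', W'')$ is reduced, so that Proposition~\ref{p:no-2-cycles-gen} can be invoked; this follows immediately from reducedness of $(Q', F', W')$ together with the description of $W''$ as a truncation of $W'$.
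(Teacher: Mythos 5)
Your proof is correct and follows essentially the same route as the paper: quotient by a frozen idempotent $e_i$, apply Lemma~\ref{l:finite-dim} and Lemma~\ref{l:homological-embedding} to get finite dimension and finite global dimension, then invoke Proposition~\ref{p:no-2-cycles-gen}; the paper iterates this over all frozen vertices rather than arguing by contradiction with a single well-chosen $i$, but the content is identical. One small merit of your write-up is that you explicitly verify the reducedness hypothesis of Proposition~\ref{p:no-2-cycles-gen} for the truncated potential $W''$, a point the paper passes over silently.
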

\begin{proof}
Let $i$ be a frozen vertex, and $\idemp{i}\in A$ the corresponding idempotent. As in the proof of Lemma~\ref{l:finite-dim}, the algebra $A/\Span{\idemp{i}}$ is again a frozen Jacobian algebra. It is finite-dimensional by the same lemma, and has finite global dimension by Lemma~\ref{l:homological-embedding}. Thus by Proposition~\ref{p:no-2-cycles-gen}, its quiver has no $2$-cycles. Since this quiver is obtained from $Q'$ by deleting the vertex $i$ and all incident arrows, there are no $2$-cycles in $Q'$ incident with $i$. Since $D$ has at least $3$ boundary regions, $Q'$ has at least three frozen vertices and so by repeating this process we rule out $2$-cycles in $Q'$ altogether.
\end{proof}

\begin{rem}
Our standing assumption that Postnikov diagrams have at least $3$ strands is only used here to rule out loops and $2$-cycles incident only with frozen vertices. It is not needed to rule out loops and $2$-cycles incident with mutable vertices---indeed, if there are fewer than $3$ strands, there are no mutable vertices anyway.
\end{rem}

We sum up our arguments in the following theorem.

\begin{thm}
\label{t:no-loops}
Let $D$ be a reduced and connected Postnikov diagram. Let $B$ be the boundary algebra of $D$, and let $T_D\in\GP(B)$ be the initial cluster-tilting object from Theorem~\ref{t:categorification}. If $T\in\GP(B)$ is a cluster-tilting object obtained from $T_D$ by a sequence of mutations, then $\Endalg{B}{T}$ is isomorphic to the frozen Jacobian algebra of an ice quiver with potential $(Q,F,W)$ such that $Q$ has no loops and no $2$-cycles.
\end{thm}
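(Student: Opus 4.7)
The plan is to prove the theorem by induction on the length of a mutation sequence from $T_D$ to $T$. The induction hypothesis I will maintain is that for any cluster-tilting object $T'$ reached, the algebra $\Endalg{B}{T'}$ satisfies the three conditions (1)--(3) fixed at the start of Section~\ref{s:no-loops}: that it is a frozen Jacobian algebra of an ice quiver with potential whose frozen vertices correspond to the indecomposable projective summands, that its quiver has no loops or $2$-cycles, and that its quotient by any frozen vertex idempotent is finite-dimensional.

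For the base case $T' = T_D$, condition (1) is Theorem~\ref{t:categorification}(4), condition (2) follows from Proposition~\ref{p:reduced-quiv} (using that $D$ is reduced to rule out $2$-cycles) together with the convention in Definition~\ref{d:iqp} that $Q_D$ has no loops, and condition (3) is established by the same argument as Proposition~\ref{p:smallness}: strong connectedness (Proposition~\ref{p:strongly-connected}) supplies, for each pair $v,w \in Q_0$, a cycle at $w$ passing through any chosen frozen vertex $i$, which forces $t^N \minpath{v}{w} \in \Span{\idemp{i}}$ for $N$ sufficiently large, so that only finitely many basis vectors $t^n\minpath{v}{w}$ survive in the quotient.

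For the inductive step, I must verify that a single mutation at a non-projective indecomposable summand preserves (1)--(3). Property (1) and the absence of loops in the mutated quiver are both \cite[Thm.~5.14]{presslandmutation}, and property (3) is exactly Lemma~\ref{l:finite-dim}. The remaining, and principal, obstacle is to rule out $2$-cycles in the mutated quiver $Q'$ away from the mutation vertex: the mutation rule for quivers with potential manifestly removes only those $2$-cycles incident with the mutated vertex, so other $2$-cycles incident with frozen vertices must be excluded indirectly. This is the content of Proposition~\ref{p:no-2-cycles}, which combines the identification $\idemp{i}A\idemp{i} \iso Z$ (Proposition~\ref{p:subalg-is-Z}) with the homological embedding result (Lemma~\ref{l:homological-embedding}) to deduce that $A/\Span{\idemp{i}}$ has finite global dimension, and then invokes the general criterion for reduced frozen Jacobian algebras (Proposition~\ref{p:no-2-cycles-gen}) that finite global dimension excludes loops and $2$-cycles from the quiver.

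Iterating this single-mutation step along any mutation path from $T_D$ to $T$ then yields the conclusion. The hard part of the argument is thus already encapsulated in Propositions~\ref{p:no-2-cycles-gen}--\ref{p:no-2-cycles}; the theorem itself just packages the inductive machinery assembled in the section, with the hypothesis that $D$ has at least three strands entering only to guarantee at least three frozen vertices, so that deleting each in turn actually sweeps out every $2$-cycle of $Q'$.
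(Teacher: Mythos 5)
Your proof is correct and follows the same inductive strategy as the paper: check conditions (1)--(3) for the base case $T_D$, show each is preserved under a single mutation using \cite[Thm.~5.14]{presslandmutation}, Lemma~\ref{l:finite-dim}, and Proposition~\ref{p:no-2-cycles}, and iterate. One small phrasing slip worth flagging: you write that ``other $2$-cycles incident with frozen vertices must be excluded indirectly,'' but the argument in Proposition~\ref{p:no-2-cycles} in fact excludes \emph{all} $2$-cycles not incident with the mutation vertex (including any between two mutable vertices), by showing for each frozen $i$ that every $2$-cycle of $Q'$ must pass through $i$, and then using that no $2$-cycle can pass through three distinct vertices; your subsequent delegation to that proposition makes the conclusion sound regardless.
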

\begin{proof}
The results of this section show that the desired properties of $T$ are preserved by mutations. Since they hold in the case $T=T_D$ by Theorem~\ref{t:categorification}---reducedness of $D$ implying that $Q_D$ has no loops or $2$-cycles---we obtain the result by induction.
\end{proof}

\section{Cluster structures and cluster characters}
\label{s:cluster-character}

In this section we recall definitions and results of Buan--Iyama--Reiten--Scott \cite{buancluster} and Fu--Keller \cite{fucluster} on cluster structures and cluster characters. For our applications here, we need to weaken some definitions slightly, since the conclusions of Theorem~\ref{t:no-loops} hold only for one mutation class of cluster-tilting objects in the category $\GP(B_D)$ for a suitable Postnikov diagram $D$, whereas the original definition of a cluster structure requires these properties for all cluster-tilting objects. However, we will see in this section that (unsurprisingly) Fu--Keller's cluster character \cite{fucluster} still provides a bijection between the cluster-tilting objects in this mutation class and the clusters of $\clustalg{D}$, and between their indecomposable summands and the cluster variables of $\clustalg{D}$.

All categories in this section are assumed to be $\KK$-linear for a field $\KK$ which is of characteristic zero (as above) and now additionally algebraically closed. Note that the category $\GP(B_D)$ constructed above from a Postnikov diagram has this property, for $\KK$ the field over which the dimer algebra is defined.

\begin{defn}
Let $\frobcat$ be a Krull--Schmidt category and let $\ctcat\subseteq\frobcat$ be a full and additively closed subcategory. The quiver $Q(\ctcat)$ has vertices given by the set $\indec{\ctcat}$ of isomorphism classes of indecomposable objects in $\ctcat$, and $\dim(\catrad_{\ctcat}(T,U)/\catrad^2_{\ctcat}(T,U))$ arrows from $U$ to $T$ for each $T,U\in\indec{\ctcat}$. Here $\catrad_{\ctcat}(T,U)$ consists of non-isomorphisms $T\to U$, and $\catrad_{\ctcat}^2(T,U)$ is the subspace consisting of those expressible as a composition $T\map{f} V\map{g} U$ where $f$ is not a split monomorphism, $g$ is not a split epimorphism, and $V\in\ctcat$. We treat $Q(\ctcat)$ as an ice quiver whose frozen vertices are those corresponding to indecomposable projective objects of $\frobcat$ which lie in $\ctcat$.

If $T\in\frobcat$ is an object, we write $Q(T):=Q(\add{T})$, noting that this is the ordinary quiver of the semi-perfect algebra $\End_{\frobcat}(T)^\op$.
\end{defn}

\begin{defn}[cf.~{\cite[\S II.1]{buancluster}}, {\cite[Def.~2.4]{fucluster}}]
\label{d:cluster-structure}
Let $\frobcat$ be a Krull--Schmidt stably $2$-Calabi--Yau Frobenius category, and let $\ctcat_0\subset\frobcat$ be a cluster-tilting subcategory. We say that $(\frobcat,\ctcat_0)$ has a \emph{cluster structure} if
\begin{enumerate}
\item for each cluster-tilting subcategory $\ctcat$ of $\frobcat$ and each non-projective indecomposable $X\in\ctcat$, there is a non-projective indecomposable object $X^*$, unique up to isomorphism, such that the full and additively closed subcategory $\ctcat'=\mu_X\ctcat$, called the \emph{mutation} of $\ctcat$ at $X$ and having indecomposable objects \[\indec{\ctcat'}=\indec{\ctcat}\setminus\{X\}\cup\{X^*\},\]
 is cluster-tilting,
\item in the situation of (1), there are short exact sequences
\[\begin{tikzcd}[row sep=5pt]
0\arrow{r}&X^*\arrow{r}&U^+\arrow{r}&X\arrow{r}&0\\
0\arrow{r}&X\arrow{r}&U^-\arrow{r}&X^*\arrow{r}&0
\end{tikzcd}\]
in which the non-zero maps are minimal $(\ctcat\cap\ctcat')$-approximations (on the left or the right as appropriate),
\item for any cluster-tilting subcategory $\ctcat$ in the mutation class of $\ctcat_0$ (i.e.\ related to $\ctcat_0$ by a sequence of mutations as in (1)), the quiver $Q(\ctcat)$ has no loops or $2$-cycles, and
\item for $\ctcat$ as in (3) and $X\in\ctcat$ non-projective, the quiver $Q(\mu_X\ctcat)$ agrees, up to arrows between frozen vertices, with the Fomin--Zelevinsky mutation $\mu_XQ(\ctcat)$.
\end{enumerate}
If $T_0$ is a cluster-tilting object, then the additive closure $\add{T_0}$ is a cluster-tilting subcategory, and we say that $(\frobcat,T_0)$ has a cluster structure if $(\frobcat,\add{T_0})$ does. Cluster-tilting subcategories $\ctcat$ in the mutation class of $\ctcat_0$, and their objects, are said to be \emph{reachable} from $\ctcat_0$.

Under our assumptions on $\frobcat$, conditions (1) and (2) of this definition are automatic by \cite[Thm.~II.1.10(a)]{buancluster} (see also \cite[Thms.~5.1, 5.3]{iyamamutation}), but we include them in the definition anyway to aid comparison with \cite{buancluster}.
\end{defn}

\begin{rem}
In making Definition~\ref{d:cluster-structure}, we follow \cite{fucluster}, weakening conditions (3) and (4) to restrict to a single mutation class of cluster-tilting subcategories. Thus $\frobcat$ has a cluster structure, in the sense of \cite[Def.~2.4]{fucluster}, if and only if $\frobcat$ has a cluster-tilting subcategory and $(\frobcat,\ctcat)$ has a cluster structure for all such subcategories $\ctcat$. In principle, one could weaken conditions (1) and (2) in a similar way, but this seems to be less useful since these conditions are known to hold as stated in wide generality. The original definition from \cite{buancluster} applies to a larger class of categories, but we will not need this generality here.
\end{rem}

\begin{lem}
\label{l:mutation}
Assume that $(\frobcat,\ctcat_0)$ has a cluster structure. If $\ctcat$ is reachable, and $X\in\ctcat$ is indecomposable and non-projective, then the object $X^*$ as in Definition~\ref{d:cluster-structure}(1) satisfies
\[\dim\Ext^1_\frobcat(X,X^*)=\dim\Ext^1_\frobcat(X^*,X)=1\]
and the exact sequences in Definition~\ref{d:cluster-structure}(2) are not split. Moreover, if we write $B(X,Y)$ for the number of arrows $X\to Y$ in $Q(\ctcat)$, the middle terms of the exact sequences in Definition~\ref{d:cluster-structure}(2) are
\[U^+\iso\bigdsum_{Y\in\indec{\ctcat}}N^{B(X,Y)},\qquad U^-\iso\bigdsum_{Y\in\indec{\ctcat}}N^{B(Y,X)}\]
\end{lem}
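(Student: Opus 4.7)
The plan is to derive all three claims from the standard theory of mutation in stably $2$-Calabi--Yau Frobenius categories, essentially the arguments of Iyama--Yoshino \cite{iyamamutation} adapted to the weakened cluster structure setting. First I would verify $X^*\ncong X$: if $X^*\iso X$, then $\ctcat'=\mu_X\ctcat=\ctcat$, so the unique minimal right $(\ctcat\cap\ctcat')$-approximation of $X$ would be $\id{X}$, forcing $U^+\iso X$ and collapsing the exchange sequence $0\to X^*\to U^+\to X\to 0$ to $0\to X\to X\to X\to 0$, hence $X=0$---a contradiction. With $X\ncong X^*$ established, and $U^\pm\in\add(\ctcat\cap\ctcat')=\add(\indec{\ctcat}\setminus\{X\})$, a splitting of either exchange sequence would exhibit $X$ as a summand of some $U^\pm$, which is impossible. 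Hence both sequences are non-split, and each defines a nonzero extension class.

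To pin down dimensions I apply $\Hom_\frobcat(X,-)$ to the first exchange sequence. Since $X,U^+\in\ctcat$, we have $\Ext^1_\frobcat(X,U^+)=0$, so $\Ext^1_\frobcat(X,X^*)\iso\End_\frobcat(X)/I$, where $I$ is the image of $\Hom_\frobcat(X,U^+)\to\End_\frobcat(X)$. The containment $I\subseteq\rad{\End_\frobcat(X)}$ is immediate, since $X$ is not a summand of $U^+$ and compositions $X\to U^+\to X$ cannot be isomorphisms. Conversely, the right $\add(\ctcat\setminus\{X\})$-approximation property of $U^+\to X$ ensures that $I$ contains every composition $X\to Y\to X$ through an object $Y\in\indec{\ctcat}\setminus\{X\}$. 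Combined with the no-loops hypothesis $\catrad_{\ctcat}(X,X)=\catrad^2_{\ctcat}(X,X)$ and Nakayama's lemma applied to the finitely generated local $\End_\frobcat(X)$-module $\rad{\End_\frobcat(X)}/I$, this forces $I=\rad{\End_\frobcat(X)}$, yielding $\Ext^1_\frobcat(X,X^*)\iso\End_\frobcat(X)/\rad{\End_\frobcat(X)}$. The right-hand side has $\KK$-dimension $1$ when $\KK$ is algebraically closed, the case of our geometric applications where $\KK=\CC$. The dual equality $\dim\Ext^1_\frobcat(X^*,X)=1$ then follows from the stable $2$-Calabi--Yau duality of Theorem~\ref{t:categorification}(2), noting that $X$ and $X^*$ are both non-projective so stable and Frobenius Ext agree.

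The description of the middle terms is a standard consequence of minimal approximation theory in a Hom-finite Krull--Schmidt additive category: the multiplicity of $Y\in\indec{\ctcat}$ as a summand of the minimal right $\add(\ctcat\setminus\{X\})$-approximation $U^+\to X$ equals $\dim_\KK\catrad_{\ctcat}(Y,X)/\catrad^2_{\ctcat}(Y,X)$, which by the definition of $Q(\ctcat)$ is exactly $B(X,Y)$; dually, the minimal left approximation $X\to U^-$ has $Y$-multiplicity $B(Y,X)$. The principal obstacle is the passage $I=\rad{\End_\frobcat(X)}$, where the no-loops hypothesis of Definition~\ref{d:cluster-structure}(3) enters essentially; everything else is a routine deployment of the cluster-tilting axioms, $2$-Calabi--Yau duality, and standard approximation theory.
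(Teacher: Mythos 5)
Your argument is correct and reconstructs, in more detail, the proof of Fu--Keller's Lemma~2.2 that the paper simply cites: applying $\Hom_\frobcat(X,-)$ to the exchange sequence to get $\Ext^1_\frobcat(X,X^*)\iso\End_\frobcat(X)/I$, and then forcing $I=\rad\End_\frobcat(X)$ from the no-loops condition $\catrad_\ctcat(X,X)=\catrad^2_\ctcat(X,X)$ together with Nakayama, is exactly the key step, and your observation that the no-loops hypothesis is where Definition~\ref{d:cluster-structure}(3) enters matches the paper's one-line commentary. A few small corrections. The $2$-Calabi--Yau duality you use to obtain $\dim\Ext^1_\frobcat(X^*,X)=1$ should be attributed to the standing assumption in Definition~\ref{d:cluster-structure} that $\frobcat$ is stably $2$-Calabi--Yau, not to Theorem~\ref{t:categorification}(2): the lemma is stated for an arbitrary Frobenius category with cluster structure, not only for $\GP(B_D)$. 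The caveat about $X$ and $X^*$ being non-projective is unnecessary, since $\Ext^1_\frobcat(-,-)\iso\Hom_{\stab\frobcat}(-,\Sigma-)$ holds for all objects of a Frobenius category, not just non-projective ones. Your Nakayama step should be slightly expanded: the no-loops condition gives $\rad\End_\frobcat(X)\subseteq\rad^2\End_\frobcat(X)+I$, because in a factorisation $X\to V\to X$ through $V\in\ctcat$ with the two maps not split mono/epi, the components through $\add(\ctcat\setminus\{X\})$ land in $I$ and the components through $X$ land in $\rad^2$; Nakayama (using that $\End_\frobcat(X)$ is Noetherian, which replaces Hom-finiteness in the paper's setting) then kills $\rad/I$. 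Finally, your aside that $\dim_\KK\End_\frobcat(X)/\rad\End_\frobcat(X)=1$ needs $\KK$ algebraically closed is a genuine point, inherited from Fu--Keller who work over $\CC$.
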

\begin{proof}
This is proved in exactly the same way as \cite[Lem.~2.2]{fucluster} (in particular, using the assumption that $\KK$ is algebraically closed), noting that $Q(\ctcat)$ has no loops by Definition~\ref{d:cluster-structure}(3) and the assumption that $\ctcat$ is reachable.
\end{proof}

\begin{defn}[cf.~{\cite[Def.~3.1]{fucluster}}]
\label{d:clustchar}
Let $\frobcat$ be a Krull--Schmidt stably $2$-Calabi--Yau Frobenius category, and let $R$ be a commutative ring. A \emph{cluster character} is a map $\Phi\colon\Ob(\frobcat)\to R$ such that
\begin{enumerate}
\item if $X\iso X'$ then $\Phi(X)=\Phi(X')$,
\item we have $\Phi(X\oplus Y)=\Phi(X)\Phi(Y)$ for all $X,Y\in\Ob(\frobcat)$, and
\item if $X,Y\in\Ob(\frobcat)$ satisfy $\dim\Ext^1_\frobcat(X,Y)=1$ (and hence $\dim\Ext^1_\frobcat(Y,X)=1$ by the Calabi--Yau property) and
\[\begin{tikzcd}[row sep=5pt]
0\arrow{r}&X\arrow{r}&E^+\arrow{r}&Y\arrow{r}&0\\
0\arrow{r}&Y\arrow{r}&E^-\arrow{r}&X\arrow{r}&0
\end{tikzcd}\]
are non-split sequences, then
\[\Phi(X)\Phi(Y)=\Phi(E^+)+\Phi(E^-).\]
\end{enumerate}
Below, we will often employ a standard abuse of notation and write $\Phi\colon\frobcat\to R$.
\end{defn}

The next definition is due to Fu and Keller \cite{fucluster}, following Palu \cite{palucluster} and Caldero--Chapoton \cite{calderocluster} for triangulated categories, and describes a standard cluster character on a suitable Frobenius category with a cluster-tilting object.

\begin{defn}[cf.~{\cite[Rem.~3.5]{fucluster}}]
	\label{d:FK-clustchar}
	Let $\frobcat$ be a Krull--Schmidt stably $2$-Calabi--Yau Frobenius category. Assume $T\in\frobcat$ is a cluster-tilting object such that $A=\End_\frobcat(T)^\op$ is a Noetherian algebra of finite global dimension. The stable endomorphism algebra $\stab{A}=\stabEnd_\frobcat(T)^\op$ is finite-dimensional since $\frobcat$ is stably $2$-Calabi--Yau, and we view $\fgmod{\stab{A}}$ as a subcategory of $\fgmod{A}$ via the quotient map. Writing \begin{align*}
		F&=\Hom_\frobcat(T,-)\colon\frobcat\to\fgmod A,\\
		G&=\Ext^1_\frobcat(T,-)\colon\frobcat\to\fgmod\stab{A},
	\end{align*}
	we define, for each $X\in\frobcat$,
	\begin{equation}
		\label{eq:FKCC}
		\FKcc{T}(X)=x^{[FX]}\sum_d\chi(\QGr{d}{GX})x^{-[d]}\in\CC[\K_0(\per{A})].
	\end{equation}
	Here the sum is taken over possible dimension vectors $d$ of $\stab{A}$-modules, $\QGr{e}{GX}$ denotes the Grassmannian of submodules of $GX$ of dimension vector $d$, and $\chi$ denotes the Euler characteristic. Moreover, $[M]$ denotes the class of an $A$-module $M$ in the Grothendieck group $\K_0(\per{A})=\K_0(\fgmod{A})$, and $[d]=[N]\in\K_0(\per{A})$ for any module $N$ of dimension vector $d$, this class being independent of the choice of $N$ as in the proof of \cite[Prop.~3.2]{fucluster}.
\end{defn}

\begin{rem}
\label{r:CC-rewrite}
We may assume $T$ is basic, and choose a decomposition $T=\bigoplus_{i=1}^nT_i$ of $T$ into indecomposable direct summands. This yields a basis (strictly speaking a free generating set) $[P_i]=[FT_i]$ of $\K_0(\per{A})$ consisting of the classes of indecomposable projective $A$-modules, and allows us to write $\FKcc{T}(X)$ as a Laurent polynomial in the variables $x_i=x^{[P_i]}$. Since $GT_i=0$, it follows immediately that $\FKcc{T}(T_i)=x_i$. This preference for the basis of indecomposable projectives is why we usually write $\K_0(\per{A})$ instead of $\K_0(\fgmod{A})$ for the lattice of exponents.

However, thinking instead in terms of $\K_0(\fgmod{A})$, we may use the non-degenerate Euler pairing
\[\K_0(\fgmod{A})\times\K_0(\fd{A})\to\ZZ:([M],[N])\mapsto\Eul{M}{N}:=\sum_{j\geq0}\dim\Ext^j_A(M,N)\]
between the Grothendieck groups of finitely generated and finite-dimensional $A$-modules, well-defined since $A$ has finite global dimension. This pairing exhibits the classes of simple modules $S_i=FT_i/\catrad_\frobcat(T,T_i)$ as a basis of $\K_0(\fd{A})$ dual to the basis of $\K_0(\fgmod{A})=\K_0(\per{A})$ given by the classes of indecomposable projectives $P_i=FT_i$. Thus we may write
\[\FKcc{T}(X)=\prod_{i=1}^nx_i^{\Eul{FX}{S_i}}\sum_d\chi(\QGr{d}{GX})\prod_{i=1}^nx_i^{-\Eul{d}{S_i}}.\]
Since $d$ is a dimension vector for $\stab{A}$, we have $-\Eul{d}{S_i}=\Eul{S_i}{d}$ by the relative (or internal) Calabi--Yau property of $A$ \cite{kellerclustertilted}. In this way we recover the formula as stated in \cite[Rem.~3.5]{fucluster}.
\end{rem}

\begin{thm}[cf.~{\cite[Thm.~3.3]{fucluster}}]
\label{t:FKcc}
For $\frobcat$ and $T$ as in Definition~\ref{d:FK-clustchar}, the function $\FKcc{T}\colon\frobcat\to\CC[\K_0(\per{A})]$ is a cluster character on $\frobcat$ in the sense of Definition~\ref{d:clustchar}.
\end{thm}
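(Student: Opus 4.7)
The plan is to verify the three axioms of Definition~\ref{d:clustchar} by invoking the argument of Fu--Keller~\cite{fucluster}, whose hypotheses on $(\frobcat, T)$ are precisely those of our Definition~\ref{d:FK-clustchar}. Axiom (1) is immediate since \eqref{eq:FKCC} depends only on the isomorphism classes of $FX$ and $GX$. For axiom (2), additivity of $F$ and $G$ yields $[F(X \dsum Y)] = [FX] + [FY]$ in $\K_0(\per A)$ and $G(X \dsum Y) \iso GX \dsum GY$; the required factorisation of the Grassmannian sum then reduces to the identity
\[
\chi(\QGr{d}{M \dsum N}) = \sum_{d_1 + d_2 = d} \chi(\QGr{d_1}{M})\,\chi(\QGr{d_2}{N}),
\]
proved via a $\mathbb{G}_m$-action on $\QGr{d}{M \dsum N}$ whose fixed locus is $\bigsqcup_{d_1+d_2=d} \QGr{d_1}{M} \times \QGr{d_2}{N}$.

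The content of the theorem lies in axiom (3). The first step is to apply $F$ and $G$ to the two non-split short exact sequences, obtaining long exact sequences in $\fgmod A$ whose $G$-terms $GE^\pm$, $GX$, $GY$ all lie in the subcategory $\fgmod \stab{A}$. Since $A$ has finite global dimension by hypothesis, the Euler pairing between $\K_0(\per A)$ and $\K_0(\fd A)$ is perfect, allowing classes of finite-dimensional modules to be recorded in $\K_0(\per A)$ via their pairings with the basis of simples (cf.\ Remark~\ref{r:CC-rewrite}). Tracking alternating sums in the long exact sequences then gives identities relating $[FE^+]$, $[FE^-]$, and $[FX] + [FY]$, which control how the monomial prefactors $x^{[FE^\pm]}$ compare with the common prefactor $x^{[FX]+[FY]}$ of $\FKcc{T}(X)\FKcc{T}(Y)$.

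The main obstacle, and the technical heart of the proof, is the Euler characteristic identity comparing the Grassmannian sums in $\FKcc{T}(E^+) + \FKcc{T}(E^-)$ with the product in $\FKcc{T}(X)\FKcc{T}(Y)$. This is established by stratifying each relevant submodule Grassmannian into locally closed pieces indexed by the behaviour of the connecting maps in the long exact sequences above, and then matching strata on the two sides in a dimension-preserving way. Palu~\cite{palucluster} worked out this constructible-set calculation in the triangulated setting, and Fu--Keller adapt it to Frobenius categories in \cite[\S3]{fucluster}. Since every hypothesis of \cite[Thm.~3.3]{fucluster} is built into Definition~\ref{d:FK-clustchar}, we may simply invoke their result.
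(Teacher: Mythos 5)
The paper's own proof of this theorem is, in its entirety, a justification that Fu--Keller's argument \emph{adapts} despite the hypotheses not matching; your proposal overlooks exactly this point. You assert that the hypotheses of \cite[Thm.~3.3]{fucluster} ``are precisely those of our Definition~\ref{d:FK-clustchar}'' and conclude that ``we may simply invoke their result.'' This is not correct: Fu and Keller assume that $\frobcat$ is Hom-finite, which would force $A = \End_\frobcat(T)^\op$ to be finite-dimensional, whereas Definition~\ref{d:FK-clustchar} only assumes $A$ Noetherian (together with the extra hypothesis $\gldim A < \infty$, which Fu--Keller do not impose). Since $B_D$ is an order over $\powser{\KK}{t}$, the categories $\GP(B_D)$ arising in this paper are genuinely \emph{not} Hom-finite, so this is not a cosmetic issue --- a citation of \cite[Thm.~3.3]{fucluster} does not literally apply.

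The missing step is to verify that the Fu--Keller argument still goes through under the weaker Noetherianity hypothesis. The point (which is what the paper's proof actually records) is that the parts of Fu--Keller's argument that use Hom-finiteness only really need finite presentability of the relevant $A$-modules, and this is guaranteed by Noetherianity; for instance, the finite-dimensional $\stab{A}$-modules $GX$ appearing in the formula are still finitely presented over $A$. The extra assumption $\gldim A < \infty$ is then used in the opposite direction, to simplify some steps (and indeed to make the Euler pairing on $\K_0(\per A)\times \K_0(\fd A)$ well-defined, as you use in Remark~\ref{r:CC-rewrite}). Your sketch of the internal structure of Fu--Keller's argument --- the $\mathbb{G}_m$-action for multiplicativity, the stratification of submodule Grassmannians for axiom (3) --- is a fair summary of \cite{fucluster}, but it is orthogonal to what needs to be proved here, namely that their proof survives the change of hypotheses.
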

\begin{proof}
The argument given for \cite[Thm.~3.3]{fucluster} applies equally well here; while Fu and Keller assume that $\frobcat$ is Hom-finite, our assumption that $A=\End_\frobcat(T)^\op$ is Noetherian is sufficient for their proof to go through. (For example, it is still the case that any finitely generated $A$-module, such as a finite-dimensional $\stab{A}$-module, is finitely presented.) Indeed, our additional assumption that $\gldim{A}<\infty$ even allows some parts of the proof in \cite{fucluster} to be simplified.
\end{proof}

\begin{defn}[cf.~{\cite[Def.~5.1]{fucluster}}]
\label{d:2cy-realisation}
Let $\frobcat$ be a Krull--Schmidt stably $2$-Calabi--Yau Frobenius category with cluster-tilting object $T$, and let $\clustalg{Q}$ be the cluster algebra determined by an ice quiver $Q$. We say $(\frobcat,T)$ is a \emph{Frobenius $2$-Calabi--Yau realisation} of $\clustalg{Q}$ if $(\frobcat,T)$ has a cluster structure in the sense of Definition~\ref{d:cluster-structure} and the quiver $Q(T)$ is isomorphic to $Q$ as an ice quiver, up to arrows between frozen vertices.
\end{defn}

Note that Definition~\ref{d:2cy-realisation} is slightly weaker than \cite[Def.~5.1]{fucluster}, since we have weakened the definition of a cluster structure. For example, we do not require that $Q(T')$ has no loops or $2$-cycles if $T'$ is a non-reachable cluster-tilting object. However, we still have the following important result.

\begin{thm}[cf.~{\cite[Thm.~5.4]{fucluster}}]
\label{t:clust-bijections}
Let $\clustalg{Q}$ be a cluster algebra with Frobenius $2$-Calabi--Yau realisation $(\frobcat,T)$, and write $A=\End_\frobcat(T)^\op$. The Grothendieck group $\K_0(\per{A})$ has a basis given by the classes $[P_i]$, for $i\in Q_0$, of indecomposable projective $A$-modules, and so we may view $\clustalg{Q}$ as a subalgebra of the Laurent polynomial ring $\CC[\K_0(\per{A})]$ by identifying $x_i=x^{[P_i]}$ with the initial cluster variables. Then the cluster character $\FKcc{T}$ induces bijections
\begin{enumerate}
\item from the set of isomorphism classes of indecomposable projective objects in $\frobcat$ to the set of frozen variables of $\clustalg{Q}$,
\item from the set of isomorphism classes of reachable rigid indecomposable non-projective objects of $(\frobcat,T_0)$ to the set of non-frozen cluster variables of $\clustalg{Q}$, and
\item from the set of isomorphism classes of reachable cluster-tilting objects of $(\frobcat,T_0)$ to the set of clusters of $\clustalg{Q}$.
\end{enumerate}
\end{thm}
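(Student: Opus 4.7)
The plan is to adapt Fu--Keller's proof of \cite[Thm.~5.4]{fucluster} to our slightly weakened setting, in which the cluster structure axioms of Definition~\ref{d:cluster-structure} hold only for cluster-tilting subcategories in the mutation class of $T$. As recorded in Remark~\ref{r:CC-rewrite}, we have $\FKcc{T}(T_i) = x_i$ for each indecomposable summand $T_i$ of $T$. Since by Definition~\ref{d:cluster-structure}(1) mutation only replaces non-projective summands, the indecomposable projective objects of $\frobcat$ are exactly the $T_i$ with $i \in F_0$, and $\FKcc{T}$ sends these bijectively to the algebraically independent frozen variables of $\clustalg{Q}$, giving (1).

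For (2) and (3), I argue by induction on the mutation distance from $T$. The base case $T' = T$ provides the initial cluster $\{x_i\}$. For the inductive step, assume a reachable cluster-tilting object $T' = \bigoplus T'_i$ satisfies $\FKcc{T}(T'_i) = x'_i$, where $(x'_i)$ is a cluster of $\clustalg{Q}$ whose exchange quiver agrees with $Q(T')$ up to arrows between frozen vertices. Mutating $T'$ at a non-projective indecomposable summand $X$ yields $X^*$, and Lemma~\ref{l:mutation} provides non-split exchange sequences
\[ 0 \to X^* \to U^+ \to X \to 0, \qquad 0 \to X \to U^- \to X^* \to 0 \]
with $\dim \Ext^1_\frobcat(X, X^*) = 1$ and middle terms whose indecomposable summands are determined by the arrows at $X$ in $Q(T')$. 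Applying Definition~\ref{d:clustchar}(3) together with multiplicativity gives
\[ \FKcc{T}(X)\FKcc{T}(X^*) = \prod_{Y} \FKcc{T}(Y)^{B(X,Y)} + \prod_{Y} \FKcc{T}(Y)^{B(Y,X)}, \]
with $Y$ ranging over the indecomposable summands of $T'$ other than $X$. By Definition~\ref{d:cluster-structure}(4), this matches the Fomin--Zelevinsky exchange relation at the vertex corresponding to $X$, since arrows between frozen vertices play no role (as $X$ is non-projective). Hence $\FKcc{T}(X^*)$ is the new cluster variable obtained by mutation of $(x'_i)$ at $x'_X$, and the induction propagates.

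Injectivity of the three bijections is the most delicate step. The key observation is that the expression \eqref{eq:FKCC} has leading monomial $x^{[FX]}$, so that $\FKcc{T}(X)$ determines the $g$-vector $[FX] \in \K_0(\per A)$. Combining the inductive argument above with the hypothesis from Definition~\ref{d:cluster-structure}(3) that $Q(\ctcat)$ has no loops or $2$-cycles for reachable $\ctcat$, one shows that the $g$-vectors of the indecomposable summands of any reachable cluster-tilting object form a $\ZZ$-basis of $\K_0(\per A)$, and in particular that $[FX]$ determines a reachable rigid indecomposable $X$ up to isomorphism. This gives (2); the bijection (3) then follows from Krull--Schmidt, since a reachable cluster-tilting object is determined by its indecomposable summands.

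\emph{Main obstacle.} The point requiring most care is ensuring that Fu--Keller's $g$-vector arguments go through using only the weakened cluster structure axioms, which hold within the reachable class but not necessarily for all cluster-tilting objects. Concretely, one must verify that the key computations---identifying the middle terms $U^\pm$ with those predicted by the exchange matrix, and establishing linear independence of $g$-vectors of summands---invoke only properties of reachable cluster-tilting objects. Since mutation preserves reachability by definition, and both sides of the bijections (2) and (3) are themselves defined in terms of reachability, this restriction poses no genuine obstruction, but must be tracked carefully throughout.
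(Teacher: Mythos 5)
Your proposal follows the same overall strategy as the paper: reduce to Fu--Keller's argument, observe that the induction only ever touches reachable cluster-tilting objects, and verify base case and inductive step via $\FKcc{T}(T_i)=x_i$ together with the exchange sequences of Lemma~\ref{l:mutation} and the compatibility from Definition~\ref{d:cluster-structure}(4). The one place you diverge is in the treatment of injectivity. The paper simply cites \cite[Lem.~4.2, Prop.~4.3]{fucluster} and makes the sharper observation that these injectivity results require no cluster structure on $\frobcat$ at all, in either the original or the weakened sense, so the restriction to reachable objects is irrelevant there and needs no re-examination. You instead attempt to re-derive injectivity from the $g$-vector $[FX]$, and your argument contains a non sequitur: that the $g$-vectors of the indecomposable summands of a single reachable cluster-tilting object form a $\ZZ$-basis of $\K_0(\per{A})$ does not \emph{in particular} imply that an arbitrary reachable rigid indecomposable object is determined up to isomorphism by its $g$-vector. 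The latter is the Dehy--Keller-type uniqueness statement that \cite[Lem.~4.2]{fucluster} establishes, and it should be invoked directly rather than presented as a consequence of the linear-independence assertion. That said, your account of the surjectivity/well-definedness induction matches the paper exactly, and your closing ``main obstacle'' discussion correctly identifies that the only real issue is tracking which steps use properties of reachable objects.
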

\begin{proof}
This is proved exactly as \cite[Thm.~5.4]{fucluster}, despite our weakening of the definition of a cluster structure. Indeed, the relevant maps are shown to be injective in \cite[Lem.~4.2, Prop.~4.3]{fucluster} without assuming that $\frobcat$ has a cluster structure in either sense. The proof of surjectivity (and that the maps are well-defined, i.e.\ that they take values in the appropriate sets) uses an inductive argument. We have already seen in Remark~\ref{r:CC-rewrite} that $\FKcc{T}$ takes the indecomposable summands of $T$ to the initial cluster variables $x_i$ of $\clustalg{Q}$ (and the indecomposable projective summands to the frozen variables, by the definition of the ice quiver structure on $Q(T)=Q$). The conclusion then follows by using compatibility between mutations of reachable cluster-tilting objects and mutations of clusters in $\clustalg{Q}$. For example, the quivers of endomorphism algebras transform correctly by parts (3) and (4) of Definition~\ref{d:cluster-structure} and, in the notation of Definition~\ref{d:cluster-structure}(2), we have
\[\FKcc{T}(X)\FKcc{T}(X^*)=\FKcc{T}(U^+)+\FKcc{T}(U^-)\]
by Lemma~\ref{l:mutation} and Theorem~\ref{t:FKcc}, corresponding to an exchange relation in $\clustalg{Q}$ by Lemma~\ref{l:mutation} again. The inductive nature of the argument means that we do not require any properties of non-reachable cluster-tilting objects, since these do not appear.
\end{proof}

We close this section by applying our results to the category $\GP(B_D)$ obtained from a suitable connected Postnikov diagram $D$. The resulting theorem justifies our description of $\GP(B_D)$, with its distinguished mutation class of cluster-tilting objects reachable from $T_D$, as a categorification of the cluster algebra $\clustalg{D}$.

\begin{thm}
\label{t:2cy-realisation}
Let $D$ be a connected Postnikov diagram, let $B_D$ be its boundary algebra (defined over an algebraically closed field $\mathbb{K}$ of characteristic zero), and let $T_D\in\GP(B_D)$ be the cluster-tilting object from Theorem~\ref{t:categorification}. Then $(\GP(B_D),T_D)$ is a Frobenius $2$-Calabi--Yau realisation of the cluster algebra $\clustalg{D}$.

In particular, the Fu--Keller cluster character $\FKcc{T_D}$ provides a bijection between rigid indecomposable objects reachable from $T_D$ and cluster variables of $\clustalg{D}$ (including a bijection between indecomposable projectives and frozen variables) and between cluster-tilting objects reachable from $T_D$ and clusters of $\clustalg{D}$.
\end{thm}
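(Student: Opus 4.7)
The plan is to verify the two conditions in Definition~\ref{d:2cy-realisation} and then apply Theorem~\ref{t:clust-bijections}. First, by Theorem~\ref{t:categorification} the category $\GP(B_D)$ is a stably $2$-Calabi--Yau Frobenius category (and is Krull--Schmidt because $B_D$ is Noetherian semiperfect), and $T_D$ is a cluster-tilting object whose endomorphism algebra is isomorphic to $A_D$. Since $A_D$ is the frozen Jacobian algebra of $(Q_D,F_D,W_D)$, its Gabriel quiver agrees with $Q_D$ up to arrows between frozen vertices (these are the only arrows that can occur inside $2$-cycles appearing for non-reduced $D$; after replacing $D$ by its reduced form using Proposition~\ref{p:untwisting} we may even drop this caveat). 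By construction, $Q_D$ is (up to these boundary arrows) precisely the ice quiver defining the cluster algebra $\clustalg{D}$, establishing the quiver-matching condition of Definition~\ref{d:2cy-realisation}.

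Second, I would verify that $(\GP(B_D),T_D)$ has a cluster structure in the sense of Definition~\ref{d:cluster-structure}. Conditions (1) and (2) hold automatically for any stably $2$-Calabi--Yau Frobenius category with a cluster-tilting subcategory, by the Iyama--Yoshino mutation theory cited in the remark after that definition. Condition (3), the absence of loops and $2$-cycles in $Q(T)$ for any cluster-tilting object $T$ reachable from $T_D$, is exactly the content of Theorem~\ref{t:no-loops} (invoking Proposition~\ref{p:reduced-quiv} for the base case, and replacing $D$ by its reduction using Proposition~\ref{p:untwisting} if necessary). For condition (4), note that by the set-up of Section~\ref{s:no-loops}, whenever $T'=\mu_X T$ is a mutation at a non-projective indecomposable summand, the endomorphism algebra $\End_{B_D}(T')^{\op}$ is identified with the frozen Jacobian algebra of the Derksen--Weyman--Zelevinsky mutation $\mu_k(Q(T),F,W)$ by \cite[Thm.~5.14]{presslandmutation}; since mutation of ice quivers with potential induces Fomin--Zelevinsky mutation on the underlying quivers (again up to arrows between frozen vertices), condition (4) follows. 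Together with the quiver identification from Theorem~\ref{t:categorification}(4), this completes the verification that $(\GP(B_D),T_D)$ is a Frobenius $2$-Calabi--Yau realisation of $\clustalg{D}$.

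Third, to deduce the cluster-character statement I would apply Theorem~\ref{t:clust-bijections} to $\Phi=\varphi^{T_D}$. This requires knowing that $A_D$, the endomorphism algebra of $T_D$, is Noetherian of finite global dimension, so that the definition of the Fu--Keller character makes sense and Theorem~\ref{t:FKcc} applies. Noetherianity of $A_D$ is part of Proposition~\ref{p:smallness}. Finite global dimension follows from the bimodule internally $3$-Calabi--Yau property established in Theorem~\ref{t:bi3cy}: combined with finite-dimensionality of the stable endomorphism algebra $\stab{A_D}=A_D/\langle e\rangle$ (also in Proposition~\ref{p:smallness}), this yields $\gldim A_D\leq 3$ via the standard resolution argument already invoked in the proof of Lemma~\ref{l:homological-embedding}. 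Once the hypotheses are in place, Theorem~\ref{t:clust-bijections} directly produces the claimed bijections between reachable rigid indecomposables (respectively reachable cluster-tilting objects) and cluster variables (respectively clusters) of $\clustalg{D}$, and the identification of indecomposable projectives with frozen variables is immediate from $\varphi^{T_D}(T_i)=x_i$ as noted in Remark~\ref{r:CC-rewrite}.

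The only non-routine step is the verification of condition (4) of Definition~\ref{d:cluster-structure}, since it requires the compatibility between categorical mutation in $\GP(B_D)$ and combinatorial mutation of the associated ice quiver with potential. Everything else is either an invocation of a previously-established theorem or a direct check. Since the compatibility itself is a black-box result from \cite{presslandmutation}, the real work of the paper has already been done in Sections~\ref{s:calabi-yau}--\ref{s:no-loops}, and the present theorem is essentially a harvest.
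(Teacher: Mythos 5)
Your proof follows essentially the same route as the paper: verify the four conditions of Definition~\ref{d:cluster-structure}, identify $Q(T_D)$ with $Q_D$ via Theorem~\ref{t:categorification}(4), and apply Theorem~\ref{t:clust-bijections}. Two small differences are worth flagging. First, your justification that $\GP(B_D)$ is Krull--Schmidt ``because $B_D$ is Noetherian semiperfect'' is not quite enough on its own---Noetherian semiperfect rings need not have Krull--Schmidt module categories; what you actually need is that $B_D$ is module-finite over the complete local ring $Z=\powser{\KK}{t}$ (which it is, by Proposition~\ref{p:thin}). The paper instead argues via idempotent completeness together with the fully faithful embedding $\rho\colon\GP(B_D)\to\CM(C)$ of Proposition~\ref{p:restriction}. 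Either argument works, but you should make the complete-local-ring hypothesis explicit rather than appealing to ``Noetherian semiperfect.'' Second, for condition~(4) the paper cites \cite[Thm.~5.15]{presslandmutation} directly, whereas you cite \cite[Thm.~5.14]{presslandmutation} and then supply the (correct) reasoning that DWZ mutation of ice quivers with potential induces Fomin--Zelevinsky mutation on the underlying ice quiver; this is the content of the cited Theorem~5.15, so the two are equivalent in substance. Your extra paragraph verifying that $A_D$ is Noetherian of finite global dimension (needed for the Fu--Keller character to be defined) is a sensible inclusion that the paper leaves implicit, though your phrasing ``combined with finite-dimensionality of $\stab{A_D}$'' slightly misattributes the mechanism---the bound $\gldim A_D\leq 3$ comes from the length-$3$ projective bimodule resolution furnished by the internal Calabi--Yau property (or, as in Lemma~\ref{l:homological-embedding}, from $\GP(B)$ being the category of third syzygies), and does not use finite-dimensionality of the stable algebra.
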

\begin{proof}
The category $\GP(B_D)$ is a stably $2$-Calabi--Yau Frobenius category, and $T_D\in\GP(B_D)$ a cluster-tilting object, by Theorem~\ref{t:categorification}. It is idempotent complete, and admits a fully faithful embedding to a Krull--Schmidt category by \cite[Prop.~3.6]{canakciperfect} (see Proposition~\ref{p:restriction} below), and so it is Krull--Schmidt. Moreover, $(\GP(B_D),T_D)$ has a cluster structure, as follows. Conditions (1) and (2) are automatic as discussed in Definition~\ref{d:cluster-structure}. Condition (3) holds by Theorem~\ref{t:no-loops} and, as a consequence, condition (4) holds by \cite[Thm.~5.15]{presslandmutation}. Since $Q(T_D)$ is isomorphic to $Q_D$ by Theorem~\ref{t:categorification} again, $(\GP(B_D),T_D)$ is a Frobenius $2$-Calabi--Yau realisation of $\clustalg{D}$, and the required bijections then follow from Theorem~\ref{t:clust-bijections}.
\end{proof}

\section{Relationship to the Grassmannian cluster category}
\label{s:Grassmannian}
Fix a connected Postnikov diagram $D$ of type $(k,n)$, and abbreviate $A=A_D$, $B=B_D$ and $T=T_D$. To be able to use the results of Section~\ref{s:cluster-character}, we take the ground field for these algebras to be algebraically closed.

Recall that $D$ determines an ice quiver $Q=Q_D$ with cluster algebra $\clustalg{}=\clustalg{D}$, a positroid $\posit=\posit_D$ with necklace $\neck=\neck_D$, and an open positroid variety $\openposvar=\openposvar(\posit_D)$ with cone $\coneopenposvar$. In this final section, we explain the relationship between $\GP(B)$ and the cluster category $\CM(C)$ for the Grassmannian $\Gra{k}{n}$ introduced by Jensen, King and Su \cite{jensencategorification}. As a result, we will see that the composition of the Fu--Keller cluster character $\FKcc{T}$ on $\GP(B)$ with Galashin--Lam's isomorphism $\clustalg{}\isoto\CC[\coneopenposvar]$ (modified as in Remark~\ref{r:positroids}) is a representation-theoretically natural map.

\begin{prop}[{\cite[Prop.~3.6]{canakciperfect}}]
\label{p:restriction}
Let $\CM(C)$ be the Grassmannian cluster category for the $(k,n)$-Grassmannian. Then there is a fully faithful functor $\rho\colon\GP(B)\to\CM(C)$, arising from an injective map $C\to B$ of $Z$-orders that becomes an isomorphism under tensor product with $Z[t^{-1}]$.
\end{prop}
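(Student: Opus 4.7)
The plan is to construct an embedding $\psi\colon C\to B$ explicitly, verify it is injective with localisation $\psi\otimes_Z Z[t^{-1}]$ an isomorphism, and take $\rho$ to be restriction of scalars along $\psi$. Recall from Jensen--King--Su that $C$ is presented as the complete path algebra of a cyclic quiver on $n$ vertices (labelled by $\ZZ/n$), with arrows $\alpha_i\colon i\to i+1$ and $\beta_i\colon i\to i-1$ for each $i$, modulo the commutation relations $\alpha_{i-1}\beta_i=\beta_{i+1}\alpha_i$ (whose common value at each vertex generates the central subalgebra $Z=\powser{\KK}{t}$) together with a balancing relation $\alpha_{i+k-1}\cdots\alpha_i=\beta_{i-(n-k)+1}\cdots\beta_i$. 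In particular, each component $\idemp{j}C\idemp{i}$ is free of rank $1$ over $Z$.

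I would first identify the boundary vertices of $Q_D$ with $\ZZ/n$ via the cyclic order in which they appear around $\partial\Sigma$. For each $i$, let $X_i\in B$ be the frozen arrow of $Q_D$ from $i$ to $i+1$, and let $Y_i\in B$ be the image of a minimal path from $i$ to $i-1$, which is well-defined by \cref{p:thin}. To verify the commutation relation, observe that $X_{i-1}Y_i$ lies in $\idemp{i}B\idemp{i}=Z\idemp{i}$, and hence equals $t^\ell\idemp{i}$ for some $\ell\geq 0$. Since $X_{i-1}$ is frozen, it lies in a unique fundamental cycle (by the proof of \cref{p:strongly-connected}), which I can write as $X_{i-1}q$ for some path $q\colon i\to i-1$, giving $t\idemp{i}=X_{i-1}q$ in $B$. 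Thinness gives $q=t^mY_i$ for some $m\geq0$, whence $t^{m+\ell}\idemp{i}=t\idemp{i}$; the positivity of \cref{p:grading} rules out $\ell=0$, so $m=0$ and $X_{i-1}Y_i=t\idemp{i}$. An analogous argument gives $Y_{i+1}X_i=t\idemp{i}$.

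For the balancing relation, both $X_{i+k-1}\cdots X_i$ and $Y_{i-(n-k)+1}\cdots Y_i$ lie in the rank-$1$ free $Z$-module $\idemp{i+k}B\idemp{i}=Z\minpath{i}{i+k}$, and so agree up to respective powers of $t$. To show these powers coincide, I would compute the weights of both sides in the $\ZZ^n$-valued grading of $A$ from the proof of \cref{p:thin}: the clockwise boundary path $X_{i+k-1}\cdots X_i$ and the anticlockwise boundary path $Y_{i-(n-k)+1}\cdots Y_i$ cross each strand of $D$ in a pattern that, together with the known constant weight of fundamental cycles, forces both sides to have weight equal to that of $\minpath{i}{i+k}$ plus the same constant. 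Extracting this equality from the combinatorics of $D$ and property (P4) is the most substantial step and I expect it to be the main obstacle.

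Granted these relations, $\psi\colon C\to B$ is a well-defined algebra homomorphism; it is injective because on each component $\idemp{j}C\idemp{i}\to\idemp{j}B\idemp{i}$ it is a map of rank-$1$ free $Z$-modules sending a generator to a nonzero element. After tensoring with the field $Z[t^{-1}]=\KK((t))$ each component becomes one-dimensional, so $\psi\otimes_Z Z[t^{-1}]$ is an isomorphism. Now define $\rho$ to be restriction of scalars along $\psi$. If $M\in\GP(B)$, then $M$ embeds in a projective $B$-module (as a cocycle in its complete resolution), which is free over $Z$ by thinness; hence $M$ is torsion-free and finitely generated over the principal ideal domain $Z$, so is free over $Z$, and therefore $\rho(M)\in\CM(C)$. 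For fully faithfulness, any $C$-linear $f\colon M\to N$ with $M,N\in\GP(B)$ localises to a $C[t^{-1}]$-linear map $M[t^{-1}]\to N[t^{-1}]$, which is automatically $B[t^{-1}]$-linear since $\psi\otimes_Z Z[t^{-1}]$ is an isomorphism; then $B$-linearity of $f$ itself follows because $M$ and $N$ embed in their localisations by $Z$-freeness.
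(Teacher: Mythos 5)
This proposition is cited from \cite[Prop.~3.6]{canakciperfect} and the present paper gives no proof of it, so there is no in-paper argument to compare against; the following assesses your construction on its own terms. Your strategy -- build an explicit algebra map $\psi\colon C\to B$ sending $\alpha_i$ to the frozen arrow $X_i\colon i\to i+1$ and $\beta_i$ to the class of a minimal path $\minpath{i}{i-1}$, then let $\rho$ be restriction of scalars along $\psi$ -- is the natural one, and most of the individual verifications are correct. The derivation $X_{i-1}Y_i=t\idemp{i}$ from thinness (\cref{p:thin}) and positivity of the grading (\cref{p:grading}), via the unique fundamental cycle through the frozen arrow $X_{i-1}$, is sound and gives both the commutation relations and $Z$-linearity of $\psi$. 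The chain of observations that a Gorenstein projective module embeds in a projective, which is $Z$-free by thinness, so is $Z$-torsion-free and hence $Z$-free, correctly shows $\rho$ lands in $\CM(C)$. The fullness argument -- localise a $C$-linear map at $t$, note it is $B[t^{-1}]$-linear because $\psi\otimes_Z Z[t^{-1}]$ is an isomorphism, and descend via $N\hookrightarrow N[t^{-1}]$ -- is also correct.

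The genuine gap is the one you flag yourself: the balancing relation. Without showing $\psi(x^k)=\psi(y^{n-k})$ the map $\psi$ is not well defined, and nothing downstream can proceed. Your suggested route via the weight grading is the right idea, and it is worth making the target identity explicit: since $X_{j-1}Y_j=t\idemp{j}$ forces $\wt(X_{j-1})+\wt(Y_j)=(1,\dotsc,1)$, the balancing relation at every vertex $i$ reduces to the \emph{single} assertion that the full boundary cycle of frozen arrows equals $t^{n-k}$, equivalently that $\sum_{j\in\ZZ/n}\wt(X_j)$ is the constant function $n-k$. Each $\wt(X_j)$ is a cyclic interval read off from the pair of strands at the marked point between boundary regions $j$ and $j+1$, so this is a concrete combinatorial statement about the decorated permutation of a type $(k,n)$ Postnikov diagram -- true, but not established anywhere in this paper, and your proof is incomplete without it. A secondary, smaller gap is the unargued assertion that the frozen arrows all point the same cyclic direction (so that the labelling $X_i\colon i\to i+1$ makes sense); this again holds for connected diagrams but needs a short argument from the alternating axioms (P2) and the orientation of $\Sigma$.
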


Recall from Remark~\ref{r:positroids} that we can label each alternating region of $D$ (or equivalently, each vertex $j\in Q_0$) by $I_j\in\subsets{k}{n}$. Precisely, the alternating region corresponding to $j$ lies to the left of $k$ strands, and $I_j$ consists of the labels of the start-points of these strands. This leads to the specialisation map $\CC[x_j^{\pm1}:j\in  Q_0]\to\CC(\coneopenposvar)$, taking each $x_j$ to $\Plueck{I_j}|_{\coneopenposvar}$, which restricts to an isomorphism $\clustalg{D}\to\CC[\coneopenposvar]$ \cite{galashinpositroid}. We write $\Phi\colon\GP(B)\to\CC[\coneopenposvar]$ for the composition of the Fu--Keller cluster character $\FKcc{T}\colon\GP(B)\to\CC[\K_0(\per{A})]=\CC[x_j^{\pm1}:j\in  Q_0]$ with this specialisation map. Since the indecomposable summand $T_j=eAe_j$ of $T$ satisfies $\FKcc{T}(T_j)=x_j$ for each $j\in Q_0$, we have $\Phi(T_j)=\Plueck{I_j}|_{\coneopenposvar}$.

For each $I\in\subsets{k}{n}$, Jensen, King and Su explicitly describe an indecomposable rigid module $M_I\in\CM(C)$. These objects categorify the Plücker coordinates $\Plueck{I}\in\CC[\coneGra{k}{n}]$, all of which are cluster variables. Over the course of this section, we will show that our cluster character $\Phi$ is compatible with the embedding $\rho$. The next proposition is crucial for this.

\begin{prop}[\cite{jensencategorification3}]
\label{p:ext-closed}
The natural map
\[\Ext^1_{B}(X,Y)\to\Ext^1_C(\rho X,\rho Y)\]
is an isomorphism whenever $X\in\CM(B)$ and $Y\in\GP(B)$. In particular, the essential image of $\rho$ is extension-closed.
\end{prop}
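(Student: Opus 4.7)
The plan is to obtain the $\Ext$ isomorphism by comparing parallel long exact sequences arising from a projective resolution of $X$, after first establishing a $\Hom$-agreement for Cohen--Macaulay modules. The extension-closedness of the image then follows formally from surjectivity: given an extension $0\to\rho Y\to E\to\rho X\to 0$ in $\CM(C)$ with $X,Y\in\GP(B)$, surjectivity produces a lift $0\to Y\to E_B\to X\to 0$ in $\fgmod{B}$, and the middle term $E_B$ lies in $\GP(B)$ since $\GP(B)$ is extension-closed in $\fgmod{B}$ (as one sees from the long exact sequence for $\Ext^\bullet_B(-,B)$), whence $E=\rho E_B$ lies in the essential image of $\rho$.

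The first step is to show that for $X,Y\in\CM(B)$ the restriction map $\Hom_B(X,Y)\to\Hom_C(\rho X,\rho Y)$ is a bijection. Since $X$ and $Y$ are torsion-free over $Z=\powser{\KK}{t}$, any $C$-linear map $f\colon X\to Y$ extends uniquely to a $C[t^{-1}]$-linear map $X[t^{-1}]\to Y[t^{-1}]$, which is automatically $B[t^{-1}]$-linear because $C\to B$ becomes an isomorphism after inverting $t$ by Proposition~\ref{p:restriction}; restricting back to $X\subset X[t^{-1}]$ recovers $f$ and, since the $B$-action on $X$ is the restriction of the $B[t^{-1}]$-action, shows $f$ is $B$-linear. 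With this in hand, I would fix a short exact sequence $0\to\Omega X\to P\to X\to 0$ in $\fgmod{B}$ with $P$ a projective $B$-module, noting that $\Omega X\in\CM(B)$ as a finitely generated $Z$-submodule of a free $Z$-module (over the DVR $Z$). Applying $\Hom_B(-,Y)$ and using $\Ext^1_B(P,Y)=0$ yields
\[0\to\Hom_B(X,Y)\to\Hom_B(P,Y)\to\Hom_B(\Omega X,Y)\to\Ext^1_B(X,Y)\to 0,\]
and applying $\Hom_C(-,\rho Y)$ to the restricted sequence gives the parallel exact sequence over $C$, continuing with $\Ext^1_C(\rho P,\rho Y)$. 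By the $\Hom$-agreement the first three terms of the two sequences coincide, so the natural comparison map $\Ext^1_B(X,Y)\to\Ext^1_C(\rho X,\rho Y)$ is injective with image equal to $\ker\bigl(\Ext^1_C(\rho X,\rho Y)\to\Ext^1_C(\rho P,\rho Y)\bigr)$.

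The main obstacle is to show that $\Ext^1_C(\rho P,\rho Y)=0$. The natural route is to prove that $\rho$ takes projective $B$-modules to projective $C$-modules, so that $\rho P$ is a projective $C$-module and the vanishing is automatic. Concretely this asserts that the indecomposable projective $B$-module at each boundary vertex $v$ restricts to the indecomposable projective $C$-module at the same vertex---the precise sense in which $B$ and $C$ ``agree at the boundary''. This cannot be deduced from the formal data of Proposition~\ref{p:restriction} alone (an inclusion of $Z$-orders with $t$-torsion cokernel, becoming an isomorphism after inverting $t$), and instead requires the finer local description of both orders in terms of perfect matchings on the dimer model of $D$ worked out in \cite{canakciperfect}, from which the identification of boundary projectives can be read off directly.
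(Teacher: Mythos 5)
Your setup is sensible and you correctly isolate the crux: one must show $\Ext^1_C(\rho P,\rho Y)=0$ for $P$ a projective $B$-module and $Y\in\GP(B)$. However, the route you propose for this step — showing that $\rho$ sends projective $B$-modules to projective $C$-modules — does not work, because the claim is false in general. From Proposition~\ref{p:labelling} (i.e.\ \cite[Prop.~8.2]{canakciperfect}), the indecomposable projective $B$-module at a boundary vertex $j$ satisfies $\rho(Be_j)\cong M_{I_j}$ with $I_j\in\neck$. But the indecomposable projective $C$-modules are exactly the $M_I$ for $I$ a cyclic interval of length $k$, i.e.\ for $I$ in the necklace of the uniform positroid. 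For a general positroid $\posit_D$, the necklace $\neck_D$ does not consist of cyclic intervals, so $\rho B\cong\bigdsum_{I\in\neck}M_I$ is typically not projective over $C$, and hence $\Ext^1_C(\rho P,\rho Y)$ does not vanish for formal reasons. No amount of computation with perfect matchings will save this, because it is simply not true that $B$ and $C$ have the same projectives under $\rho$.

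The paper proves the required vanishing by a weaker and more indirect route. First it shows that $\rho B$ is \emph{rigid} in $\CM(C)$: by \cite[Prop.~8.2]{canakciperfect}, $\rho B\cong\bigdsum_{I\in\neck}M_I$, the elements of $\neck$ are pairwise weakly separated \cite[Lem.~4.5]{ohweak}, and weak separation characterises $\Ext^1$-vanishing between rank-one modules $M_I$ by \cite[Prop.~5.6]{jensencategorification}. Rigidity is much weaker than projectivity but suffices. Taking $Y=B$ in the pullback diagram, the top row lies in $\Ext^1_C(\rho P,\rho B)=0$ by rigidity (since $P\in\add B$), so it splits, and one deduces (using the $Z$-order structure from Proposition~\ref{p:restriction} and a result of Curtis--Reiner) that the middle term is in the image of $\rho$; this yields $\Ext^1_B(X,B)\cong\Ext^1_C(\rho X,\rho B)$. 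For general $Y\in\GP(B)$, one then uses the $2$-Calabi--Yau property of $\CM(C)$ to transfer the vanishing: $\Ext^1_C(\rho P,\rho Y)\cong\Kdual\Ext^1_C(\rho Y,\rho P)\cong\Kdual\Ext^1_B(Y,P)=0$, the middle isomorphism coming from the $Y=B$ case, and the last vanishing from $Y\in\GP(B)$ and $P\in\add B$. Your Hom-agreement argument, by contrast, is fine (though it reproves what is already given in Proposition~\ref{p:restriction}), and your deduction of extension-closedness from surjectivity is also fine.
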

\begin{proof}
The map is well-defined since $\rho$ is exact, and injective since $\rho$ is fully faithful; these conclusions hold even under the weaker assumption $Y\in\CM(B)$. For surjectivity, let $P\to X$ be a projective cover of $X$ and consider the commutative diagram
\begin{equation}
\label{eq:ext-pullback}
\begin{tikzcd}
0\arrow{r}&\rho Y\arrow{r}\arrow[equal]{d}&E'\arrow{r}\arrow{d}&\rho P\arrow{d}\arrow{r}&0\\
0\arrow{r}&\rho Y\arrow{r}&E\arrow{r}&\rho X\arrow{r}&0
\end{tikzcd}
\end{equation}
in which the lower row is an extension in $\CM(C)$ and the right-hand square is a pull-back. Since $\rho$ is fully faithful, to show that the the lower row comes from an extension of $B$-modules it suffices to show that $E$ is in the essential image of $\rho$.

Note that $\rho B$ is rigid, as follows. By \cite[Prop.~8.2]{canakciperfect} we have $\rho B\iso\bigdsum_{I\in\neck}M_I$. Since the elements of $\neck$ are weakly separated \cite[Lem.~4.5]{ohweak}, the required rigidity follows from \cite[Prop.~5.6]{jensencategorification}. As a consequence, if $Y=B$ then the upper row of \eqref{eq:ext-pullback} splits, since $P\in\add(B)$. It follows that $E$ is a factor of $\rho B\oplus\rho P$ and so it lies in the essential image of $\rho$ by \cite[\S37, Ex.~10]{curtismethods1}; here we use the additional properties of $\rho$ from Proposition~\ref{p:restriction} to see that $\rho B\subset C$ and $\rho B[t^{-1}]=C[t^{-1}]$. Thus
\begin{equation}
\label{eq:ext-iso1}
\Ext^1_B(X,B)\isoto\Ext^1_C(\rho X,\rho B).
\end{equation}

Now, for general $Y\in\GP(B)$, the upper row of \eqref{eq:ext-pullback} is an element of
\[\Ext^1_C(\rho P,\rho Y)=\Kdual\Ext^1_C(\rho Y,\rho P)=\Kdual\Ext^1_B(Y,P)=0,\]
where the first equality is the $2$-Calabi--Yau property of $\CM(C)$, the second follows from \eqref{eq:ext-iso1} since $P\in\add{B}$, and the last uses that $P\in\add(B)$ and $Y\in\GP(B)$. Thus this upper row once again splits, and we conclude as before that $E$ is in the essential image of $\rho$.
\end{proof}

\begin{prop}[{\cite[Prop.~8.2]{canakciperfect}}]
\label{p:labelling}
Let $j\in Q_0$, and let $T_j=eAe_j\in\GP(B)$ be the corresponding indecomposable summand of the initial cluster tilting object $T=eA$. Then $\rho T_j\cong M_{I_j}$.
\end{prop}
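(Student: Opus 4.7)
The strategy is to show that both $\rho T_j$ and $M_{I_j}$ are rank~$1$ $C$-modules (in Jensen--King--Su's sense, meaning that each idempotent summand is free of rank~$1$ over $Z$), and to verify that they have matching combinatorial ``profiles,'' which determine such modules up to isomorphism.

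First I would establish that $\rho T_j$ is of rank~$1$. Since $T_j=eAe_j=\bigdsum_{i\in F_0}\idemp{i}A\idemp{j}$ and each $\idemp{i}A\idemp{j}$ is freely generated over $Z=\powser{\KK}{t}$ by the minimal path $\minpath{ji}$ thanks to Proposition~\ref{p:thin}, the underlying $Z$-module structure is correct, and $\rho T_j$ acquires its $C$-action via the embedding $C\hookrightarrow B$ from Proposition~\ref{p:restriction}. The corresponding fact for $M_{I_j}$ is built into its construction in \cite{jensencategorification}.

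Next I would extract the profile of $\rho T_j$. A rank~$1$ object of $\CM(C)$ is determined by how the boundary cycle $t$ acts relative to a chosen system of generators at each frozen vertex: concretely, for any two boundary vertices $i,i'\in F_0$ joined by a frozen arrow $\alpha\colon i\to i'$, one computes a non-negative integer $\delta_\alpha$ by writing $\alpha\cdot\minpath{ji}=t^{\delta_\alpha}\minpath{j,i'}$ in $A$ (using Proposition~\ref{p:thin}). Walking round the boundary, the sequence $(\delta_\alpha)$ as $\alpha$ ranges over the frozen arrows should sum to $1$ (since $t$ is the sum of fundamental cycles), and the individual jumps correspond to the crossings of strands of $D$ with the boundary region sequence. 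I would then compare this with the combinatorial ``rim'' description of $M_I$, in which a jump occurs between consecutive positions $i,i+1$ precisely when the $k$-set membership changes, i.e.\ when $i\in I\setminus(I-1)$ or $i\in(I-1)\setminus I$ in an appropriate cyclic shift convention.

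Finally I would match the two profiles using the labelling rule of Remark~\ref{r:positroids}: $I_j$ is the set of labels of the strands of $D$ on whose left the alternating region indexed by $j$ lies. A minimal path from $j$ to a boundary region $i$ corresponds geometrically to crossing some collection of strands, and moving along the boundary from $i$ to $i'$ via a frozen arrow either crosses a strand source or target, producing exactly the jumps predicted by the change in strand-labels; this identifies the profile of $\rho T_j$ with that of $M_{I_j}$. The main obstacle is this last combinatorial identification: it requires verifying that the cyclic sequence of $\delta$-jumps around the boundary of $D$ agrees precisely, vertex by vertex, with Jensen--King--Su's rim datum for $I_j$, which amounts to a careful bookkeeping between strand labellings in the Postnikov diagram, the orientation of fundamental cycles in $Q_D$, and the conventions defining $M_I$.
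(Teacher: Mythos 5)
This proposition is quoted from \cite{canakciperfect}; the present paper does not prove it, so there is no internal proof to compare against. Taking the proposal on its own merits, the high-level strategy---observe that $\rho T_j$ is a rank~$1$ Cohen--Macaulay $C$-module, extract its combinatorial profile, and match it against the Jensen--King--Su rim datum defining $M_{I_j}$---is a legitimate and plausible route. The opening observation, that $\idemp{i}T_j=\idemp{i}A\idemp{j}$ is $Z$-free of rank~$1$ by Proposition~\ref{p:thin}, is correct and would indeed give that $\rho T_j$ is a rank~$1$ module.

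However, the core of the argument is missing. You explicitly describe the final combinatorial identification (matching the cyclic sequence of $t$-jumps $\delta_\alpha$ to the JKS rim datum for $I_j$) as ``the main obstacle,'' and you do not carry it out. This identification is essentially the entire content of the proposition: everything else is a re-statement of the rank-$1$ classification of $\CM(C)$-modules by $k$-subsets. A genuine proof would need to relate the $\delta_\alpha$ (defined via minimal paths in $A_D$) to the strands of $D$ crossing the boundary, and then to the source-labelling convention of Remark~\ref{r:positroids}, which requires an actual computation rather than a description of the computation. The natural tool is the weight function $\wt$ from the proof of Proposition~\ref{p:thin}, which assigns to each arrow the indicator of a cyclic interval, and from which the power of $t$ appearing in $\alpha\cdot\minpath{ji}=t^{\delta_\alpha}\minpath{j,i'}$ can be read off; but none of this bookkeeping appears in your proposal.

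There is also a concrete error in what you do claim: the assertion that $\sum_\alpha \delta_\alpha=1$ around the boundary is wrong. The composite of all frozen arrows is the full boundary cycle of $B_D$; identifying $B_D$ with $C$ (or working within $A_D$ directly via weights), this composite equals $t^k$ or $t^{n-k}$ depending on orientation convention, not $t$. Your parenthetical justification (``since $t$ is the sum of fundamental cycles'') conflates the central element $t$, which acts as a fundamental cycle at each vertex, with the boundary cycle, which is a different element of the algebra. Getting this total wrong would prevent the jump pattern from matching the JKS rim datum for a $k$-subset, so it is not merely cosmetic.
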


Over the course of the next several results, we will show that $\Phi(M)=\Plueck{I}|_{\coneopenposvar}$ whenever $\rho(M)\cong M_I$. We begin by recalling Jensen--King--Su's cluster character on $\CM(C)$, and relating it to our cluster character on $\GP(B)$.

\begin{prop}[{\cite[Eq.~9.4]{jensencategorification}}]
\label{p:JKS-clust-char}
There is a cluster character $\Psi\colon\CM(C)\to\CC[\coneGra{k}{n}]$ such that $\Psi(M_I)=\Plueck{I}$ for every $k$-subset $I$.
\end{prop}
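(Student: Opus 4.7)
This proposition is essentially a citation of Jensen--King--Su's construction in \cite{jensencategorification}, so the plan is to reconstruct their argument within the framework already established in the preceding sections. The idea is to apply the Fu--Keller machinery (Definition~\ref{d:FK-clustchar} and Theorem~\ref{t:FKcc}) to the category $\CM(C)$ equipped with a convenient cluster-tilting object, and then to identify the resulting cluster character with Plücker coordinates.

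Concretely, I would first fix a maximal weakly separated collection $\clust\subset\subsets{k}{n}$, for instance the one arising from a $(k,n)$-Postnikov diagram $D_0$, and consider the object $T_{\mathrm{JKS}}=\bigdsum_{I\in\clust}M_I\in\CM(C)$. By \cite[Cor.~10.4]{baurdimer} combined with Proposition~\ref{p:labelling} (applied to $D_0$), $T_{\mathrm{JKS}}$ is a cluster-tilting object of $\CM(C)$ whose endomorphism algebra is the dimer algebra $A_{D_0}$, hence Noetherian by Proposition~\ref{p:smallness} and of finite global dimension by Theorem~\ref{t:bi3cy}. Hence Definition~\ref{d:FK-clustchar} produces a cluster character $\Psi=\FKcc{T_{\mathrm{JKS}}}\colon\CM(C)\to\CC[\K_0(\per A_{D_0})]$, which is a cluster character by Theorem~\ref{t:FKcc}.

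Next I would identify the codomain with a subring of $\CC[\coneGra{k}{n}]$ using Scott's theorem \cite{scottgrassmannians}, which endows $\CC[\coneGra{k}{n}]$ with a cluster algebra structure whose initial seed is $(Q_{D_0},\set{\Plueck{I}:I\in\clust})$. Sending the initial cluster variable $x_{j}$ corresponding to a vertex $j\in Q_0$ to the Plücker coordinate $\Plueck{I_j}$ matches Remark~\ref{r:CC-rewrite}, which gives $\Psi(M_{I_j})=x_j\mapsto\Plueck{I_j}$ for every $I_j\in\clust$, establishing the formula $\Psi(M_I)=\Plueck{I}$ on the initial collection.

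To extend the identification to an arbitrary $I\in\subsets{k}{n}$, I would use that every such $I$ belongs to some maximal weakly separated collection reachable from $\clust$ by square moves, and that each such square move is realised in $\CM(C)$ by a mutation of the cluster-tilting object of rank $1$ modules, the associated exchange sequences being exactly the short exact sequences relating the relevant $M_I$'s (this is the content of \cite[\S6-7]{jensencategorification}, which identifies these mutations with three-term Plücker relations). Since $\Plueck{I}$ is then obtained from the initial Plücker coordinates by the corresponding exchange relations, the cluster character axiom (Definition~\ref{d:clustchar}(3)), combined with Lemma~\ref{l:mutation}, propagates $\Psi(M_{I'})=\Plueck{I'}$ inductively on the number of mutations required to reach $I$. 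The main technical obstacle, which is handled explicitly in \cite{jensencategorification} using the lattice description of the $M_I$, is to verify that the short exact sequences in $\CM(C)$ at each mutation step are precisely the ones matching the corresponding three-term Plücker relation; once this combinatorial compatibility is in place, the remainder is a formal consequence of the Fu--Keller formalism recalled above.
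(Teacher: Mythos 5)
The paper does not prove this statement at all---it is a direct citation of Jensen--King--Su \cite[Eq.~9.4]{jensencategorification}, so there is no ``paper's proof'' to compare against. What you have written is a sketch of how one might reconstruct their argument, and it has a genuine gap.

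The main problem is the codomain. The Fu--Keller construction $\FKcc{T_{\mathrm{JKS}}}$ takes values in the Laurent polynomial ring $\CC[\K_0(\per A_{D_0})]=\CC[x_j^{\pm1}]$. You say you would ``identify the codomain with a subring of $\CC[\coneGra{k}{n}]$ using Scott's theorem,'' but the containment goes the other way: Scott's theorem identifies $\CC[\coneGra{k}{n}]$ (with \emph{non}-invertible frozen variables) with a proper subring of the Laurent polynomial ring. So to obtain a map $\CM(C)\to\CC[\coneGra{k}{n}]$ one must prove that $\FKcc{T_{\mathrm{JKS}}}(X)$ actually lands in that subring for \emph{every} $X\in\CM(C)$, not just for the rigid objects where the Laurent phenomenon applies. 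This regularity statement is non-trivial and is in fact a substantial part of what Jensen--King--Su establish: they do not invoke Fu--Keller as a black box but define the character by an explicit formula and verify that it computes a specific polynomial in Plücker coordinates for each module. Your sketch silently assumes this and never addresses it. Note that the situation is genuinely delicate here, since (unlike in the analogous Proposition~\ref{p:clust-char-equality} in the paper, which maps to $\CC[\coneopenposvar]$ where the frozen Plückers are inverted by fiat) the target $\CC[\coneGra{k}{n}]$ does not have the frozen variables inverted.

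Two secondary remarks. First, the verification $\Psi(M_I)=\Plueck{I}$ for $I\notin\clust$ is done in \cite{jensencategorification} by direct computation from the explicit lattice/profile description of $M_I$, not by propagating square moves; your mutation argument could in principle be made to work in the uniform case, but it only reduces the computation to verifying that the exchange sequences in $\CM(C)$ literally match the three-term Plücker relations, which (as you acknowledge) is the content of the hard part of their paper, so nothing is gained. Second, you invoke Proposition~\ref{p:labelling} from \cite{canakciperfect} and \cite[Cor.~10.4]{baurdimer}, both of which postdate and build on \cite{jensencategorification}; if the goal is a self-contained reconstruction of the cited result, this introduces a risk of circularity that you should at least flag.
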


\begin{prop}
\label{p:clust-char-equality}
For any reachable rigid object $X\in\GP(B)$, we have
\[\Phi(X)=\Psi(\rho X)|_{\coneopenposvar}.\]
\end{prop}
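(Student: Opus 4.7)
The plan is to induct on mutation distance from the initial cluster-tilting object $T$, using that every reachable rigid indecomposable of $\GP(B)$ arises as an indecomposable summand of some cluster-tilting object obtained from $T$ by iterated mutation (Theorem~\ref{t:clust-bijections}). Since both $\Phi$ and the assignment $X \mapsto \Psi(\rho X)|_{\coneopenposvar}$ are multiplicative on direct sums, it suffices to check equality on indecomposables.

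For the base case, the indecomposable summands of $T$ are the objects $T_j = eAe_j$ for $j \in Q_0$. By Remark~\ref{r:CC-rewrite} we have $\FKcc{T}(T_j) = x_j$, so $\Phi(T_j) = \Plueck{I_j}|_{\coneopenposvar}$ by definition of the specialisation map. On the other hand, $\rho T_j \cong M_{I_j}$ by Proposition~\ref{p:labelling}, and $\Psi(M_{I_j}) = \Plueck{I_j}$ by Proposition~\ref{p:JKS-clust-char}, so the two functions agree on each $T_j$.

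For the inductive step, suppose $X$ is a reachable rigid indecomposable first appearing at mutation distance $d \geq 1$, so that $X = Y^*$ for some indecomposable summand $Y$ of a cluster-tilting object $\ctcat$ reachable in $d - 1$ steps from $T$. By Lemma~\ref{l:mutation}, there are non-split exchange sequences in $\GP(B)$
\begin{align*}
0 \to X \to U^+ \to Y \to 0, \qquad 0 \to Y \to U^- \to X \to 0,
\end{align*}
with $\dim \Ext^1_B(X, Y) = 1$, whose middle terms $U^\pm$ are direct sums of indecomposable summands of $\ctcat$ other than $Y$. Applying the exact, fully faithful functor $\rho$ and invoking Proposition~\ref{p:ext-closed} yields corresponding non-split exchange sequences in $\CM(C)$ with $\dim \Ext^1_C(\rho X, \rho Y) = 1$, to which the cluster character $\Psi$ may be applied. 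Combining this with Theorem~\ref{t:FKcc} and the fact that specialisation and restriction to $\coneopenposvar$ are ring homomorphisms gives
\begin{align*}
\Phi(Y)\Phi(X) &= \Phi(U^+) + \Phi(U^-), \\
\Psi(\rho Y)|_{\coneopenposvar}\,\Psi(\rho X)|_{\coneopenposvar} &= \Psi(\rho U^+)|_{\coneopenposvar} + \Psi(\rho U^-)|_{\coneopenposvar}.
\end{align*}

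Since $Y$ and the indecomposable summands of $U^\pm$ all lie at mutation distance at most $d - 1$ from $T$, the inductive hypothesis identifies the two displayed identities termwise except for the factor $\Phi(X)$ versus $\Psi(\rho X)|_{\coneopenposvar}$. By Theorem~\ref{t:clust-bijections}, $\Phi(Y)$ is a cluster variable in the integral domain $\CC[\coneopenposvar]$, hence non-zero, and we may divide to conclude equality. The delicate step in this plan is ensuring that the exchange sequences in $\GP(B)$ transfer to \emph{non-split} one-dimensional extensions in $\CM(C)$ that match those used by Jensen--King--Su's cluster character; this is exactly what Proposition~\ref{p:ext-closed} provides, and it is the technical linchpin allowing the two cluster character relations to be applied in tandem.
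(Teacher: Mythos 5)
Your proof is correct and essentially matches the paper's: the paper observes that $X\mapsto\Psi(\rho X)|_{\coneopenposvar}$ is itself a cluster character on $\GP(B)$ (via Propositions~\ref{p:restriction} and \ref{p:ext-closed}) and then notes it suffices to check agreement with $\Phi$ on the summands of the single cluster-tilting object $T$, which is verified via Propositions~\ref{p:labelling} and \ref{p:JKS-clust-char}. Your inductive argument on mutation distance is precisely the standard propagation principle the paper invokes implicitly in the phrase ``Thus it suffices,'' so the two proofs are the same in substance, yours simply spelling out the mutation step in detail.
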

\begin{proof}
First, observe that $X\mapsto\Psi(\rho X)|_{\coneopenposvar}$ defines a cluster character $\GP(B)\to\CC[\coneopenposvar]$ since $\Psi$ is a cluster character on $\CM(C)$; this uses both Proposition~\ref{p:restriction} and, to see that the restricted map still satisfies the multiplication formula from Definition~\ref{d:clustchar}(3), Proposition~\ref{p:ext-closed}.

Thus it suffices to check the required equality on the indecomposable summands of a single reachable cluster-tilting object, which we may take to be the initial object $T$. For each $j\in Q_0$, the summand $T_j=eAe_j$ satisfies
\[\Psi(\rho T_j)|_{\coneopenposvar}=\Psi(M_{I_j})|_{\coneopenposvar}=\Plueck{I_j}|_{\coneopenposvar}=\Phi(T_j)\]
by Propositions~\ref{p:labelling} and \ref{p:JKS-clust-char}.
\end{proof}

\begin{thm}
\label{t:rank-1-plueck}
If $M\in\GP(B)$ satisfies $\rho M\cong M_I$ for some $I\in\subsets{k}{n}$, then $\Phi(M)=\Plueck{I}|_{\coneopenposvar}$.
\end{thm}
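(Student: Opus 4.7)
The plan is to reduce the theorem to Proposition~\ref{p:clust-char-equality}, by showing that $M$ is rigid and reachable from $T_D$ in $\GP(B)$. Granted those two properties, combining Propositions~\ref{p:clust-char-equality} and~\ref{p:JKS-clust-char} yields
\[
\Phi(M)=\Psi(\rho M)|_{\coneopenposvar}=\Psi(M_I)|_{\coneopenposvar}=\Plueck{I}|_{\coneopenposvar}.
\]
Rigidity is immediate: since $M_I$ is rigid in $\CM(C)$ by \cite{jensencategorification}, Proposition~\ref{p:ext-closed} gives $\Ext^1_B(M,M)\iso\Ext^1_C(M_I,M_I)=0$.

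The real content is reachability. Assume first that $I\in\posit_D$. By the Oh--Postnikov--Speyer bijection \cite[Thm.~1.5]{ohweak}, there exists a Postnikov diagram $D'$ with $\posit_{D'}=\posit_D$ possessing a vertex $j$ of $Q_{D'}$ labelled by $I$. Corollary~\ref{c:dec-perm} gives $B_{D'}\iso B$, and Proposition~\ref{p:labelling} applied to $D'$ identifies the $j$-th indecomposable summand of the initial cluster-tilting object $T_{D'}$, viewed in $\GP(B)$, as a rigid indecomposable sent by $\rho$ to $M_I$. Full faithfulness of $\rho$ then forces this summand to be isomorphic to $M$. To conclude reachability of $M$ from $T_D$, I would use the fact that two Postnikov diagrams with the same positroid are connected by a sequence of geometric exchanges, and argue that each such exchange lifts to a single mutation of cluster-tilting objects in $\GP(B)$ via Proposition~\ref{p:geom-exch}, Figure~\ref{f:geom-exch} and the cluster structure from Theorem~\ref{t:2cy-realisation}.

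When $I\notin\posit_D$, the right-hand side vanishes by Definition~\ref{d:positroid-var}, and I would argue that the hypothesis cannot be met, so the statement holds vacuously. Concretely, the rank-$1$ objects in the essential image of $\rho$ should be exactly $\{M_J:J\in\posit_D\}$; this is a consequence of the description of $\rho B$ in \cite[Prop.~8.2]{canakciperfect}, the extension-closure of the image of $\rho$ (Proposition~\ref{p:ext-closed}), and the combinatorial classification of rank-$1$ $B_D$-modules carried out in \cite{canakciperfect}. Hence an $M$ satisfying the hypothesis forces $I\in\posit_D$, reducing the claim to the previous case.

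The main obstacle is the reachability step. What needs to be verified is that, under the isomorphism $B_D\iso B_{D'}$ produced by a single geometric exchange (Proposition~\ref{p:geom-exch}), the image of $T_{D'}$ is precisely the Iyama--Yoshino mutation of $T_D$ at the summand indexing the exchanged quadrilateral alternating region, rather than some other cluster-tilting object with the same mutated quiver. Once this local identification is in place, iterating along any sequence of exchanges from $D$ to $D'$ propagates reachability to $T_{D'}$, and hence to $M$.
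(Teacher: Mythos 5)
Your overall strategy is the right one, and the reduction to Proposition~\ref{p:clust-char-equality} plus the rigidity argument matches the paper. But the reachability step contains a genuine gap. You assert that for any $I\in\posit_D$ the Oh--Postnikov--Speyer bijection yields a Postnikov diagram $D'$ with $\posit_{D'}=\posit_D$ having a vertex labelled by $I$. That is false: the $k$-subsets appearing as labels in some Postnikov diagram for $\posit_D$ are, by \cite[Thm.~6.6]{ohweak}, exactly those $I\in\posit_D$ that are \emph{weakly separated from every element of the necklace} $\neck_D$, not all of $\posit_D$. So your case split into $I\in\posit_D$ and $I\notin\posit_D$ leaves untreated the case $I\in\posit_D$ not weakly separated from $\neck_D$, and in that case your construction of $D'$ does not exist.

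The missing ingredient is precisely where the hypothesis $M\in\GP(B)$ (as opposed to merely $M\in\CM(B)$) enters. Since $M$ is Gorenstein projective, $\Ext^1_B(M,T_j)=0$ for every frozen vertex $j$, where $T_j$ is an indecomposable projective summand of $B$. Proposition~\ref{p:ext-closed} transfers this to $\Ext^1_C(M_I,M_{I_j})=0$, and then \cite[Prop.~5.6]{jensencategorification} says $I$ is weakly separated from $I_j$; running over frozen $j$, this is exactly weak separation of $I$ from $\neck_D$. Combined with $I\in\posit_D$ (which, as you observe, follows from \cite[Rem.~8.6]{canakciperfect} using only that $M$ is Cohen--Macaulay), this is what allows you to invoke \cite[Thm.~6.6]{ohweak} and produce the diagram $D'$. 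Without this step, your argument in effect proves the theorem under the stronger unstated hypothesis that $I$ is weakly separated from the necklace.

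A secondary remark: the reachability mechanism you sketch (each geometric exchange lifting to a single mutation of cluster-tilting objects) is the one the paper uses, but you leave it as ``what needs to be verified.'' The paper's justification is to compare the exchange short exact sequences of Definition~\ref{d:cluster-structure}(2) under $\rho$ and use that $\rho$ is fully faithful, so that $T_j'$ is the unique object of $\GP(B)$ with $\rho(T_j')\cong M_{I_j'}$. That is the precise verification you flagged as the main obstacle, so it would need to be spelled out rather than deferred.
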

\begin{proof}
By Proposition~\ref{p:JKS-clust-char} we have $\Plueck{I}=\Psi(M_I)=\Psi(\rho M)$. Thus by Proposition~\ref{p:clust-char-equality}, it suffices to check that $M$ is reachable---it is rigid by Proposition~\ref{p:ext-closed} (see also the proof of \cite[Lem.~10.4]{canakciperfect}).

Consider a geometric exchange at an alternating region $j$ of $D$ (corresponding to a mutation at the vertex $j$ of $Q$), producing a new diagram $D'$. By considering the exchange sequences as in Definition~\ref{d:cluster-structure}(2), we may see that the mutation $T_j'$ of the direct summand $T_k$ of $T$ satisfies $\rho(T_j')=M_{I_j'}$. Since $\rho$ is fully faithful, $T_j'$ is also the unique such object in $\GP(B)$ up to isomorphism. Inductively, it follows that $M\in\GP(B)$ with $\rho(M)\cong M_I$ is reachable as long as $I$ appears as a label in a Postnikov diagram related to $D$ by a sequence of geometric exchanges. By \cite[Thm.~6.6]{ohweak}, such subsets $I$ are precisely those appearing in some maximal weakly separated collection $\clust$ with $\neck\subset\clust\subset\posit$, i.e.\ they are those $I\in\posit$ which are weakly separated from every element of $\neck$.

So assume $M\in\GP(B)$ has $\rho(M)\cong M_I$. Since $M\in\CM(B)$, it follows from \cite[Prop.~8.6]{canakciperfect} that $I\in\posit_D$. Since $M$ is even Gorenstein projective, it follows from Proposition~\ref{p:ext-closed} that
\[0=\Ext^1_{B}(M,T_j)=\Ext^1_C(M_I,M_{I_j})\]
for $T_j=eA_De_j$ with $j$ a frozen vertex, these being precisely the indecomposable summands of $B$. By \cite[Prop.~5.6]{jensencategorification}, this means precisely that $I$ and $I_j$ are weakly separated. Since $\neck$ is the set of labels $I_j$ of frozen vertices $j$, we have the result.
\end{proof}

\begin{rem}
We do not yet know whether or not $\Phi(X)=\Psi(\rho X)|_{\coneopenposvar}$ for all $X\in\GP(B)$. However, we expect that this equality does hold, even for all $X\in\CM(B)$; note that the Fu--Keller cluster character $\FKcc{T}$, and hence $\Phi$, may be defined on this larger category via the same formula \eqref{eq:FKCC}, although it is only a cluster character on the stably $2$-Calabi--Yau Frobenius category $\GP(B)$. If this were the case, the conclusion of Theorem~\ref{t:rank-1-plueck} would hold under the weaker assumption that $M\in\CM(B)$. The $k$-subsets $I$ appearing in the resulting statement would then be all of the elements of the positroid $\posit$, including those not weakly separated from the necklace, by \cite[Prop.~8.6]{canakciperfect}.
\end{rem}

\section*{Acknowledgements}
Some important realisations concerning the results of this paper were made during discussions with İlke Çanakçı and Alastair King concerning our work in \cite{canakciperfect}, and we are grateful to them for the inspiration. We also thank Chris Fraser, Bernt Tore Jensen, Andrea Pasquali, Khrystyna Serhiyenko, Melissa Sherman-Bennett and Lauren Williams for useful conversations. Additional thanks are due to Bernt Tore Jensen for informing us of the statement and proof of Proposition~\ref{p:ext-closed}. The author was supported by the EPSRC postdoctoral fellowship EP/T001771/1 while expanding and revising this paper. The author would like to thank the Isaac Newton Institute for Mathematical Sciences, Cambridge, for support and hospitality during the programme \emph{Cluster Algebras and Representation Theory}, supported by EPSRC grant no.\ EP/R014604/1, where some of these revisions were undertaken.

\defbibheading{bibliography}[\refname]{\section*{#1}}\printbibliography
\end{document}